\documentclass[12pt]{amsart}
\usepackage[margin=1.1in]{geometry}
\usepackage{graphicx}

\newcommand{\RR}{\mathbb{R}}
\newcommand{\length}{\mathcal{H}^1}
\newcommand{\surf}{\mathcal{H}^{n-1}}
\newcommand{\Dtop}{D_{\mathrm{top}}}
\newcommand{\Hdim}{\mathop{\mathcal{H}\!\textrm{--}\dim}}
\newcommand{\dist}{\mathop\mathrm{dist}}
\newcommand{\diam}{\mathop\mathrm{diam}}
\newcommand{\interior}{\mathop\mathrm{int}}

\newcommand{\res}{\hbox{{\vrule height .22cm}{\leaders\hrule\hskip.2cm}}}

\newcommand{\Xint}[1]{\mathchoice
    {\XXint\displaystyle\textstyle{#1}}%
    {\XXint\textstyle\scriptstyle{#1}}%
    {\XXint\scriptstyle\scriptscriptstyle{#1}}%
    {\XXint\scriptscriptstyle\scriptscriptstyle{#1}}%
    \!\int}
\newcommand{\XXint}[3]{\setbox0=\hbox{$#1{#2#3}{\int}$}
    \vcenter{\hbox{$#2#3$}}\kern-.5\wd0}
\newcommand{\dashint}{\Xint-}

\newtheorem{oldthm}{Theorem}

\newtheorem{theorem}{Theorem}[section]
\newtheorem{lemma}[theorem]{Lemma}
\newtheorem{proposition}[theorem]{Proposition}
\newtheorem{corollary}[theorem]{Corollary}
\newtheorem{conjecture}[theorem]{Conjecture}
\theoremstyle{definition}
\newtheorem{definition}[theorem]{Definition}
\newtheorem{remark}[theorem]{Remark}

\numberwithin{equation}{section} 

\begin{document}

\title[Harmonic measure and Lipschitz approximation]{Null sets of harmonic measure on NTA domains: Lipschitz approximation revisited}
\author{Matthew Badger}
\thanks{The author was partially supported by NSF grants DMS-0838212 and DMS-0856687}
\address{Department of Mathematics, University of Washington, Box 354350, Seattle, WA, 98195-4350, USA}
\email{mbadger@math.washington.edu}
\date{October 28, 2010}
\subjclass[2000]{28A75, 31A15}
\keywords{Harmonic measure, absolute continuity, big pieces of Lipschitz graphs, corkscrew condition, NTA domain, Hausdorff dimension, Wolff snowflake}
\begin{abstract} We show the David-Jerison construction of big pieces of Lipschitz graphs inside a corkscrew domain does not require surface measure be upper Ahlfors regular. Thus we can study absolute continuity of harmonic measure and surface measure on NTA domains of locally finite perimeter using Lipschitz approximations. A partial analogue of the F. and M. Riesz Theorem for simply connected planar domains is obtained for NTA domains in space. As one consequence every Wolff snowflake has infinite surface measure.
\end{abstract}
\maketitle

\section{Introduction}
\label{intro}

What are the minimal assumptions on the boundary of a domain $\Omega\subset\RR^n$ to guarantee its harmonic measure $\omega$ and surface measure $\sigma=\surf\res\partial\Omega$ have the same null sets? When $n=2$, for example, one has the classic result of F. and M. Riesz \cite{Riesz}. In the plane, a topological condition ($\partial\Omega$ is a Jordan curve) and a mild measure-theoretic condition ($\partial\Omega$ has finite length) imply harmonic measure vanishes exactly on sets of zero length.

\begin{oldthm}[F.\ and M.\ Riesz 1916] \label{FMsc} Let $\Omega\subset\RR^2$ be a simply connected domain, bounded by a Jordan curve. If $\length(\partial\Omega)<\infty$, then \begin{equation}\label{FMsceq} \omega(E)=0\Leftrightarrow \length(E)=0\quad\text{for every Borel set }E\subset\partial\Omega.\end{equation}
\end{oldthm}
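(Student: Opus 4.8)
The natural plan is to transport the problem to the unit disk via a Riemann map and then invoke classical Hardy--space theory. Let $\varphi\colon\mathbb{D}\to\Omega$ be a conformal map of the unit disk onto $\Omega$ and set $x_0:=\varphi(0)$; since harmonic measures with different poles are mutually absolutely continuous (Harnack's inequality), it suffices to verify \eqref{FMsceq} for $\omega=\omega^{x_0}$. As $\partial\Omega$ is a Jordan curve, Carath\'eodory's theorem extends $\varphi$ to a homeomorphism $\overline{\mathbb{D}}\to\overline{\Omega}$; composing the (classical) solution of a Dirichlet problem on the Jordan domain $\Omega$ with $\varphi$ and evaluating at the origin then identifies $\omega^{x_0}$ as the push-forward under $\varphi$ of normalized arc length on $\partial\mathbb{D}$, i.e.
\[
\omega^{x_0}(E)=\frac1{2\pi}\bigl|\varphi^{-1}(E)\bigr|\qquad\text{for every Borel }E\subset\partial\Omega ,
\]
where $|\cdot|$ is Lebesgue measure on the circle. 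So \eqref{FMsceq} reduces to the claim that a Borel set $A\subset\partial\mathbb{D}$ is Lebesgue-null if and only if $\length(\varphi(A))=0$.

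The analytic core is that $\partial\Omega$ being a \emph{rectifiable} Jordan curve forces $\varphi'\in H^1(\mathbb{D})$. Here is how I would see this. Since $\Omega$ is bounded, $\varphi$ is a bounded holomorphic function, so its boundary values $h(\theta):=\varphi(e^{i\theta})$ have vanishing negative Fourier coefficients. On the other hand, by Carath\'eodory, $h$ is an injective parametrization of the rectifiable curve $\partial\Omega$, hence $h$ has bounded variation with total variation $\length(\partial\Omega)$; thus the associated complex Lebesgue--Stieltjes measure $dh$ is a finite measure on $\partial\mathbb{D}$ whose Fourier coefficients $\widehat{dh}(n)=in\,\widehat h(n)$ vanish for all $n\le0$. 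The classical F.\ and M.\ Riesz theorem on measures \cite{Riesz}---a measure on the circle with one-sided spectrum is absolutely continuous with respect to Lebesgue measure---then shows $dh\ll d\theta$, so $h$ is absolutely continuous with $h'\in L^1(\partial\mathbb{D})$; unwinding the Fourier coefficients, $-ie^{-i\theta}h'(\theta)$ lies in $L^1$ with spectrum in $\{n\ge0\}$ and is therefore the boundary function of an $H^1$ function, which is none other than $\varphi'$. Consequently $\theta\mapsto\varphi(e^{i\theta})$ is absolutely continuous with a.e.\ derivative $ie^{i\theta}\varphi'(e^{i\theta})$, the nontangential boundary value of $\varphi'$, and since it is injective the change-of-variables formula
\[
\length(\varphi(A))=\int_A|\varphi'(e^{i\theta})|\,d\theta\qquad\text{for every Borel }A\subset\partial\mathbb{D}
\]
holds. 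Finally, $\varphi'$ is a not-identically-zero $H^1$ function, so its canonical (inner--outer) factorization gives $\log|\varphi'|\in L^1(\partial\mathbb{D})$; in particular $|\varphi'(e^{i\theta})|>0$ for almost every $\theta$.

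Combining the two displays, for Borel $E\subset\partial\Omega$ put $A=\varphi^{-1}(E)$: then $\omega(E)=\tfrac1{2\pi}|A|$ while $\length(E)=\int_A|\varphi'(e^{i\theta})|\,d\theta$, and because $|\varphi'|>0$ almost everywhere the latter integral vanishes exactly when $|A|=0$. This yields \eqref{FMsceq}. The one genuinely substantial ingredient is the F.\ and M.\ Riesz theorem on measures used to upgrade ``bounded variation'' to ``absolutely continuous'' on the boundary---equivalently, the membership $\varphi'\in H^1$---and that is where I expect the real work to lie; the passage between $\Omega$ and $\mathbb{D}$, the push-forward identity for $\omega$, and the manipulations with $L^1$ and $H^1$ boundary functions are all routine.
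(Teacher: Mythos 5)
The paper does not prove Theorem A: it is stated as classical background with only the citation to \cite{Riesz}, so there is no in-paper argument to compare against. Your proof is the standard conformal-mapping/Hardy-space argument (as in Duren or Garnett--Marshall): identify $\omega$ with the push-forward of normalized arc length under the Riemann map $\varphi$, use the F.\ and M.\ Riesz theorem on measures with one-sided spectrum to upgrade the bounded-variation boundary function to an absolutely continuous one and conclude $\varphi'\in H^1$, and then combine the length formula $\length(\varphi(A))=\int_A|\varphi'|\,d\theta$ with $\log|\varphi'|\in L^1$ (so $|\varphi'|>0$ a.e.) to get both implications. The argument is correct and complete.
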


If one strengthens the hypothesis $\length(\partial\Omega)<\infty$ in Theorem \ref{FMsc}, the relationship witnessed between $\omega$ and $\sigma$ is stronger than absolute continuity \cite{L}. A Jordan curve $\partial\Omega$ is called a \emph{chord-arc curve} if $\partial\Omega$ is a quasicircle and there exists a constant $C>0$ such that $\length(\Delta(Q,r))\leq Cr$ for all $Q\in\partial\Omega$ and $0<r<\diam \Omega$, where $\Delta(Q,r)=\partial\Omega\cap B(Q,r)$.

\begin{oldthm}[Lavrentiev 1936] \label{Lsc} Let $\Omega\subset\RR^2$ be a simply connected domain, bounded by a chord-arc curve. Then (1.1) holds and $\omega\in A_\infty(\sigma)$, i.e., there exist constants $0<\delta<1$ and $0<\varepsilon<1$ such that for every $\Delta=\Delta(Q,r)$, \begin{equation}\label{Lsceq}\sigma(E)\leq \delta\sigma(\Delta) \Rightarrow \omega(E)\leq \varepsilon\omega(\Delta)\quad\text{for every Borel set }E\subset\Delta.\end{equation}
\end{oldthm}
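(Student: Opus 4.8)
The plan is to obtain the equivalence \eqref{FMsceq} directly from the classical theory, and to derive the quantitative $A_\infty$ bound \eqref{Lsceq} from the geometry of chord-arc curves by way of big pieces of Lipschitz graphs. Fix a pole $x_0\in\Omega$ and write $\omega=\omega^{x_0}$. A chord-arc curve is, by definition, a quasicircle, hence a Jordan curve; covering $\partial\Omega$ by boundedly many surface balls of radius $\tfrac12\diam\Omega$ and invoking the chord-arc inequality gives $\length(\partial\Omega)<\infty$, while the connectedness of $\partial\Omega$ (together with the fact that some boundary point lies at distance at least $\tfrac12\diam\Omega$ from any given $Q\in\partial\Omega$) yields the complementary lower bound $\length(\Delta(Q,r))\ge\tfrac12 r$ for $0<r<\diam\Omega$. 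Thus $\partial\Omega$ is a Jordan curve of finite length, so Theorem~\ref{FMsc} already supplies \eqref{FMsceq} and the mutual absolute continuity $\omega\ll\sigma\ll\omega$; it remains to establish \eqref{Lsceq}.

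For the $A_\infty$ bound I would use two facts: a quasidisk is an NTA domain, and, by the previous paragraph, $\partial\Omega$ is $1$-Ahlfors regular. Consequently $\Omega$ falls within the reach of the David--Jerison construction: there are constants $L\ge 1$ and $\theta\in(0,1)$ such that for every $Q\in\partial\Omega$ and $0<r<\diam\Omega$ there is a Lipschitz subdomain $\Omega_{Q,r}\subset\Omega$ with Lipschitz constant at most $L$ and a Borel set $F_{Q,r}\subset\Delta(Q,r)\cap\partial\Omega_{Q,r}$ with $\sigma(F_{Q,r})\ge\theta\,\sigma(\Delta(Q,r))$. On each $\Omega_{Q,r}$, Dahlberg's theorem furnishes a reverse H\"older inequality for its harmonic measure with respect to surface measure, with constants depending only on $L$; and since $\Omega$ is NTA, a comparison-principle argument (the maximum principle together with boundary Harnack estimates for $\Omega$ and for the subdomain $\Omega_{Q,r}$) shows the densities of $\omega$ and of the harmonic measure of $\Omega_{Q,r}$ are comparable on the common boundary piece $F_{Q,r}$. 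Hence, setting $k=d\omega/d\sigma$, for every surface ball $\Delta=\Delta(Q,r)$ there is $F\subset\Delta$ with $\sigma(F)\ge\theta\,\sigma(\Delta)$ on which $k$ satisfies the uniform reverse H\"older inequality
\[
\Bigl(\frac{1}{\sigma(\Delta)}\int_{F}k^{2}\,d\sigma\Bigr)^{1/2}\le \frac{C}{\sigma(\Delta)}\int_{F}k\,d\sigma .
\]

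Since $\omega$ --- being harmonic measure on an NTA domain --- and $\sigma$ --- being $1$-Ahlfors regular --- are both doubling, one then upgrades this ``reverse H\"older on big pieces'' to the full $A_\infty$ property \eqref{Lsceq} by a standard self-improvement argument: a stopping-time decomposition of a fixed surface ball $\Delta$ into dyadic surface cubes, followed by applying the big-piece estimate on each stopping cube and summing the resulting geometric series, bounds $\int_{\Delta}k^{2}\,d\sigma$ by $C\,\omega(\Delta)^{2}/\sigma(\Delta)$; that is, $k$ satisfies a reverse H\"older inequality on all of $\Delta$, which is equivalent to \eqref{Lsceq}.

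I expect the main obstacle to be the middle step: verifying that the David--Jerison big-pieces-of-Lipschitz-graphs construction applies to quasidisks, and carrying out the comparison-principle transfer of Dahlberg's inequality from the Lipschitz subdomains back to $\Omega$ itself. The geometric point at the heart of this --- that one need not assume \emph{upper} Ahlfors regularity of surface measure to run the construction (here it happens to be available, but the point is essential in the broader setting treated in this paper) --- is precisely the issue the body of the paper addresses. Alternatively, in the plane one may follow Lavrentiev's original conformal-mapping route: for a Riemann map $\varphi\colon\mathbb{D}\to\Omega$ with $\varphi(0)=x_0$, the chord-arc hypothesis forces $|\varphi'|$ to be a Muckenhoupt weight on $\partial\mathbb{D}$, and transferring arcs of $\partial\mathbb{D}$ to surface balls of $\partial\Omega$ by means of the quasicircle distortion estimates converts this into \eqref{Lsceq}.
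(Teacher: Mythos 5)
The paper offers no proof of Theorem~\ref{Lsc}; it is quoted as a classical result with a citation to Lavrentiev \cite{L}, so there is no in-text argument to compare against. Your outline is essentially sound, but it is worth observing that it is not Lavrentiev's route (the conformal-mapping argument you mention only in your last sentence) --- it is the David--Jerison/Semmes route: you are deriving Theorem~\ref{Lsc} as the two-dimensional case of Theorem~\ref{Lnta}, using that a chord-arc domain is NTA and that the chord-arc condition plus connectedness of the boundary gives two-sided Ahlfors regularity. Every ingredient you invoke is exactly the machinery the paper develops in \S\S\ref{section2}--\ref{section4} (Theorem~\ref{approxthm}, Proposition~\ref{angles}, Lemma~\ref{etalemma}, Lemma~\ref{localize}), and your reduction of \eqref{FMsceq} to Theorem~\ref{FMsc} via finiteness of length is correct. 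What this buys over the conformal route is, of course, that it generalizes to $n\geq 3$, which is the point of the paper; what it costs is that each step (quasidisks are NTA, Dahlberg's theorem, the localization and comparison principles of \cite{JK}) is itself a substantial theorem that must be cited, not rederived.

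One step deserves a caution. Your claim that ``the densities of $\omega$ and of the harmonic measure of $\Omega_{Q,r}$ are comparable on $F_{Q,r}$'' is the hardest input in the argument: the inequality $\omega_{\Omega_{Q,r}}\leq\omega$ on the common boundary is just the maximum principle, but the reverse pointwise bound is the boundary Harnack/comparison theorem for NTA domains and is not automatic. Note that the paper's own use of this machinery (Proposition~\ref{shrinking}) deliberately avoids the lower density bound: it combines the one-sided maximum-principle inequality with Lemma~\ref{localize} and the quantitative size bound $\sigma(F_{Q,r})\geq\psi r$ to obtain \eqref{Lsceq} directly in the stated one-sided form, after which the full reverse H\"older inequality follows from the $A_\infty$ theory of \cite{CF} since both $\omega$ (by \cite{JK}) and $\sigma$ (by Ahlfors regularity) are doubling. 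If you restructure your middle step this way, the ``self-improvement'' paragraph becomes the standard equivalence of $A_\infty$ conditions rather than a separate stopping-time argument, and the proof closes without needing the two-sided density comparison at all.
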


An amusing fact is that the ``one-sided" condition (\ref{Lsceq}) implies (\ref{FMsceq}). Actually $\omega\in A_\infty(\sigma)$ if and only if $\sigma\in A_\infty(\omega)$; see \cite{CF}, also for several equivalent definitions of $A_\infty$ weights. For further discussion on harmonic measure in the plane, the reader should consult \cite{GM}.

The situation in higher dimensions is more delicate. In 1974, Ziemer \cite{Z} found a topological sphere $\Omega\subset\RR^3$ whose boundary is 2-rectifiable with $\mathcal{H}^2(\partial\Omega)<\infty$, but whose harmonic measure is supported on a subset of zero area. This means that any analogue of Theorem \ref{FMsc} in space must impose extra non-topological conditions on $\partial\Omega$.
In this paper, we show the class of NTA domains (recalled in \S\ref{section4}) satisfy the forward direction of (\ref{FMsceq}).

\begin{theorem}\label{FMnta} Let $\Omega\subset\RR^n$ be NTA. If $\surf\res\partial\Omega$ is Radon (e.g. if $\surf(\partial\Omega)<\infty$), then $\partial\Omega$ is $(n-1)$-rectifiable and \begin{equation} \label{FMntaeq}
\omega(E)=0\Rightarrow \surf(E)=0\quad\text{for every Borel set } E\subset\partial\Omega.\end{equation}\end{theorem}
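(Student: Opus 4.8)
The plan is to reduce both assertions to a single geometric input --- the existence, inside $\Omega$, of \emph{big pieces of bounded Lipschitz subdomains} abutting a definite amount of $\partial\Omega$ --- supplied by the surface-measure-free version of the David--Jerison construction developed in the preceding sections, and then to convert this into the stated absolute continuity via Dahlberg's theorem on Lipschitz domains. Fix a pole $X_0\in\Omega$; since $\Omega$ is a domain, Harnack's inequality along chains of balls shows $\omega^{X}$ and $\omega^{X_0}$ have the same null sets for every $X\in\Omega$, so we are free to move the pole. By local finiteness of the Radon measure $\surf\res\partial\Omega$ (and the exhaustion $E=\bigcup_R E\cap B(0,R)$), it suffices to prove \eqref{FMntaeq} for Borel sets $E$ with $\surf(E)<\infty$. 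I would first record that an NTA domain is automatically lower $(n-1)$-Ahlfors regular: for $Q\in\partial\Omega$ and $0<r<\diam\Omega$ the interior and exterior corkscrew points in $B(Q,r/2)$ lie in $\Omega$ and in $\RR^n\setminus\overline\Omega$ respectively, each at distance at least $r/M$ from $\partial\Omega$, so $\min\{|\Omega\cap B(Q,r)|,\,|B(Q,r)\setminus\Omega|\}\ge c_0(n,M)\,r^n$; if $\surf(\partial\Omega\cap B(Q,r))=\infty$ there is nothing to prove, and otherwise $\Omega$ has finite perimeter in $B(Q,r)$ and the relative isoperimetric inequality gives $\surf(\partial\Omega\cap B(Q,r))\ge P(\Omega;B(Q,r))\ge c_1(n,M)\,r^{n-1}$.

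Next I would invoke the David--Jerison construction in the form proved above: there exist constants $L\ge 1$, $\theta>0$, $M'\ge 1$ depending only on $n$ and the NTA constants such that for every $Q\in\partial\Omega$ and $0<r<\diam\Omega$ there are an $L$-Lipschitz graph $\Gamma=\Gamma_{Q,r}$ and a bounded Lipschitz domain $W=W_{Q,r}$ with $W\subset\Omega\cap B(Q,M'r)$, $\partial W\cap\partial\Omega\subset\Gamma$, and
\[
\surf\big((\partial W\cap\partial\Omega)\cap B(Q,r)\big)\ \ge\ \theta\,r^{n-1}.
\]
The essential point, established in those sections, is that the stopping-time construction of $\Gamma$ and $W$ uses only the two corkscrew conditions (hence the lower regularity just noted), never an upper Ahlfors regularity bound, while the surface-measure estimate is free because $(\partial W\cap\partial\Omega)\cap B(Q,r)$ is a large subset of a Lipschitz graph, on which $\mathcal H^{n-1}$ and $\surf$ coincide. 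I would then isolate the following extraction step, which is where the weak Radon hypothesis enters. Let $S\subset\partial\Omega$ be Borel. By the density comparison theorem for Radon measures, $\Theta^{*,n-1}(\surf,Q)<\infty$ at $\surf$-a.e.\ $Q\in\partial\Omega$ --- otherwise $\surf$ would assign infinite mass to a compactly contained set of positive $\mathcal H^{n-1}$-measure --- and by the Lebesgue differentiation theorem $\surf$-a.e.\ $Q\in S$ is a density point of $S$, meaning $\surf(S\cap B(Q,r))/\surf(\partial\Omega\cap B(Q,r))\to1$. Combining these with the lower regularity above yields $\surf\big((\partial\Omega\setminus S)\cap B(Q,r)\big)=o(r^{n-1})$ as $r\to0$, so for such $Q$ and all sufficiently small $r$ the Lipschitz domain $W=W_{Q,r}\subset\Omega$ satisfies
\[
\surf\big((\partial W\cap\partial\Omega)\cap B(Q,r)\cap S\big)\ \ge\ \theta\,r^{n-1}-o(r^{n-1})\ >\ 0 .
\]

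Both conclusions follow from this extraction step. For $(n-1)$-rectifiability, apply it with $S=\partial\Omega\setminus\bigcup_j\Gamma_{Q_j,r_j}$, the complement of the countably many Lipschitz graphs arising from pairs $(Q_j,r_j)$ running over a countable dense subset of $\partial\Omega\times(0,\diam\Omega)$: if $\surf(S)>0$, then taking $Q$ a density point of $S$, $r$ small, and $(Q_j,r_j)$ close to $(Q,r)$ (the uniform constants of the construction keeping the captured fraction stable), the construction at $(Q_j,r_j)$ produces a set of positive $\surf$-measure lying both in $S$ and in $\Gamma_{Q_j,r_j}\subset\bigcup_l\Gamma_{Q_l,r_l}$, which is impossible; hence $\surf(S)=0$, $\surf$-a.e.\ point of $\partial\Omega$ lies on one of the graphs $\Gamma_{Q_j,r_j}$, and $\partial\Omega$ is $(n-1)$-rectifiable. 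For \eqref{FMntaeq}, suppose $\omega(E)=0$ but $\surf(E)>0$; applying the step with $S=E$ produces a bounded Lipschitz domain $W\subset\Omega$ and a set $F:=(\partial W\cap\partial\Omega)\cap B(Q,r)\cap E$ with $\surf(F)>0$. By Dahlberg's theorem $\omega_W$ and $\surf\res\partial W$ are mutually absolutely continuous, so $\omega_W^{X}(F)>0$ for $X\in W$; since $W\subset\Omega$ and $F\subset\partial W\cap\partial\Omega$, monotonicity of harmonic measure under domain inclusion for sets on the common boundary (maximum principle, equivalently comparison of Brownian exit distributions) gives $\omega_\Omega^{X}(F)\ge\omega_W^{X}(F)>0$, and moving the pole back to $X_0$ yields $\omega(E)\ge\omega(F)>0$ --- a contradiction. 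This proves \eqref{FMntaeq}.

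I expect the main obstacle to be the David--Jerison construction invoked above: the real work is to check that its stopping-time argument, carried out in a domain satisfying only the two corkscrew conditions, produces the Lipschitz graph $\Gamma$ and subdomain $W$ with the stated lower bound on $\surf((\partial W\cap\partial\Omega)\cap B(Q,r))$ without ever using upper Ahlfors regularity of $\surf$. Within the present argument the only delicate point is the extraction step --- pushing a positive-surface-measure piece of $S$ onto the boundary of a Lipschitz comparison subdomain --- which forces one to first discard the $\surf$-null set where $\Theta^{*,n-1}(\surf,\cdot)=\infty$ and to work at density points of $S$.
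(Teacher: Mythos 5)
Your argument is correct, but it reaches the theorem by a genuinely different route than the paper. The paper deduces Theorem \ref{FMnta} in one line from the stronger Theorem \ref{macthm}: since $\surf\res\partial\Omega$ is Radon, the upper $(n-1)$-density is finite $\surf$-a.e., so $\surf(\partial\Omega\setminus A)=0$ and the two-sided conclusion $\omega\res A\ll\sigma\res A\ll\omega\res A$ yields \eqref{FMntaeq}; the cost is hidden in the proof of Theorem \ref{macthm}, which needs the quantitative comparison of $\omega_L$ and $\sigma_L$ on the approximating Lipschitz domain (all of \S\ref{section22}, in particular Proposition \ref{angles} and Lemma \ref{etalemma}), the localization property of harmonic measure on NTA domains (Lemma \ref{localize}), and a Vitali covering argument (Theorem \ref{vitali}) to glue the estimates \eqref{ainfty1}--\eqref{ainfty2} across points and scales. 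You instead prove the one-directional statement directly: at a $\surf$-density point of $E$ having finite upper density, the big piece $\partial W\cap\partial\Omega$ of Theorem \ref{approxthm} must meet $E$ in positive measure, the qualitative form of Dahlberg's theorem gives $\omega_W(F)>0$, and the monotonicity $\omega_W^X(F)\le\omega_\Omega^X(F)$ for $F$ on the common boundary transfers positivity to $\Omega$. This is leaner---it bypasses Lemma \ref{localize}, Theorem \ref{vitali} and \S\ref{section22} entirely, and since it never invokes the Harnack chain condition it in fact proves \eqref{FMntaeq} for corkscrew domains with well-defined harmonic measure---but it buys only the forward implication, whereas the paper's machinery also delivers $\omega\res A\ll\sigma\res A$ and quantitative $A_\infty$-type control. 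Two cautions. First, Theorem \ref{approxthm} gives the big-piece constant $\psi$ depending only on $n$ and $M$, but the Lipschitz character of $W$ depends additionally on $\gamma=\surf(\Delta(Q,r))/r^{n-1}$, which under the Radon hypothesis is finite at every location and scale but not uniformly bounded; your assertion that all constants depend only on the NTA data overstates what is proved, though this is harmless here because you use Dahlberg only qualitatively and only the size $\theta=\psi$ of the big piece needs to be uniform to beat the $o(r^{n-1})$ error at density points. Second, your rectifiability argument is essentially Corollary \ref{corkrect}; to make the exceptional set $S=\partial\Omega\setminus\bigcup_j\Gamma_{Q_j,r_j}$ Borel you should fix one graph per parameter pair in the countable dense family before differentiating.
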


The proof of Theorem \ref{FMnta} that we present is based on the extension of Theorem B to $n \geq 3$ given by David and Jerison \cite{DJ}. Let  $\Omega\subset\RR^n$ be a NTA domain and assume its surface measure is \emph{Ahlfors regular}; that is, there exists a constant $C>0$ such that \begin{equation} \label{ahlfors} C^{-1}r^{n-1} \leq \surf(\Delta(Q,r))\leq Cr^{n-1}\quad\text{for all }Q\in\partial\Omega\text{ and }0<r<r_0.\end{equation} Using the existence of $(n-1)$-disks inside $B(Q,r)\cap\Omega$ and $B(Q,r)\setminus\Omega$ of radius $\geq c_0r$ (a weaker property than the corkscrew conditions enjoyed by NTA domains) and (\ref{ahlfors}), David and Jerison gave a geometric construction of Lipschitz domains $\Omega_L\subset B(Q,r)\cap \Omega$  such that $\surf(\partial\Omega_L\cap\partial\Omega)\geq c_1r^{n-1}$. In other words, there exists a Lipschitz approximation to $\Omega$ at each location and scale, which has substantial intersection in the boundary. Applying Dahlberg's theorem relating harmonic and surface measures on Lipschitz domains \cite{D} and a localization property for harmonic measure on NTA domains \cite{JK} yields Theorem B for NTA domains. (Theorem \ref{Lnta} was independently verified for 2-sided NTA domains by Semmes \cite{S} using a stopping time argument.)

\begin{oldthm}[David and Jerison, Semmes 1990] \label{Lnta}Let $\Omega\subset\RR^n$ be NTA. If (\ref{ahlfors}) holds, then $\omega\ll\sigma\ll\omega$ and $\omega\in A_\infty(\sigma)$.\end{oldthm}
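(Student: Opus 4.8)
The plan is to follow the David--Jerison scheme. At every location and scale I would produce a Lipschitz approximation of $\Omega$ with a large boundary piece, transfer Dahlberg's theorem \cite{D} from the Lipschitz piece back to $\Omega$ via the maximum principle to obtain a scale-invariant lower bound for $\omega$ in terms of $\sigma$ at a single scale, and then bootstrap that bound to $\omega\in A_\infty(\sigma)$ using that harmonic measure is doubling on NTA domains \cite{JK}; mutual absolute continuity drops out along the way.

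Fix $Q\in\partial\Omega$ and $0<r<r_0$. Since $\Omega$ satisfies the corkscrew conditions and (\ref{ahlfors}) holds, the construction of \cite{DJ} recalled before the theorem produces a Lipschitz domain $\Omega_L\subset B(Q,r/2)\cap\Omega$, whose Lipschitz character is bounded in terms of $n$, the NTA constants of $\Omega$, and $C$, such that $F:=\partial\Omega_L\cap\partial\Omega$ satisfies $\surf(F)\geq c_1 r^{n-1}$; by the upper bound in (\ref{ahlfors}) this gives $\surf(F)\geq c_2\,\surf(\Delta(Q,r))$, while $\surf(\partial\Omega_L)\leq C'r^{n-1}$ because $\Omega_L$ is a bounded Lipschitz domain of diameter comparable to $r$, and $F\subset\Delta(Q,r)$. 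Let $X_L\in\Omega_L$ be a point whose distance to $\partial\Omega_L$ is comparable to $r$. Three observations then combine. First (Dahlberg on $\Omega_L$): $\omega_{\Omega_L}$ and $\surf\res\partial\Omega_L$ are mutually $A_\infty$ with constants depending only on the Lipschitz character, so every $G\subset\partial\Omega_L$ with $\surf(G)\geq\tfrac12\surf(F)$ has $\omega^{X_L}_{\Omega_L}(G)\geq\beta_0$ for a fixed $\beta_0>0$. Second (maximum principle): for Borel $E\subset F$, a Brownian path from $X_L$ that leaves $\Omega_L$ through $F$ simultaneously leaves $\Omega$ through $F$, hence $\omega^{X_L}_\Omega(E)\geq\omega^{X_L}_{\Omega_L}(E)$. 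Third (geometry): a segment from $X_L$ to a nearby point of $\partial\Omega$ must first meet $\partial\Omega_L$, so the distance from $X_L$ to $\partial\Omega$ is also comparable to $r$ and $X_L$ is a corkscrew-type point for $\Omega$ at scale $r$ near $Q$.

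These combine into the following single-scale estimate: there exist $\theta,\eta\in(0,1)$, depending only on $n$ and the structural constants, such that for every surface ball $\Delta=\Delta(Q,r)$ with $r<r_0$, every Borel $E\subset\Delta$, and every pole $X\in\Omega$,
\[
\surf(\Delta\setminus E)\leq\theta\,\surf(\Delta)\quad\Longrightarrow\quad\omega^X_\Omega(E)\geq\eta\,\omega^X_\Omega(\Delta).
\]
Indeed, if $\surf(\Delta\setminus E)\leq\theta\,\surf(\Delta)$ with $\theta<c_2/2$, then, since $F\subset\Delta$, $\surf(E\cap F)\geq\surf(F)-\surf(\Delta\setminus E)\geq\tfrac12\surf(F)$, so by the first two observations $\omega^{X_L}_\Omega(E)\geq\omega^{X_L}_{\Omega_L}(E\cap F)\geq\beta_0$, and since $\omega^{X_L}_\Omega(\Delta)\leq1$ this says $\omega^{X_L}_\Omega(E)\geq\beta_0\,\omega^{X_L}_\Omega(\Delta)$. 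By the third observation, an NTA Harnack chain of bounded length joining $X_L$ to a standard corkscrew point, followed by the Jerison--Kenig comparison principle \cite{JK}, bounds the ratio $\omega^X_\Omega(E)/\omega^X_\Omega(\Delta)$ from below by a fixed multiple of $\omega^{X_L}_\Omega(E)/\omega^{X_L}_\Omega(\Delta)$; as this is only a lower bound on a ratio that never exceeds $1$, the pole change costs no more than a harmless constant, so one may take $\eta$ proportional to $\beta_0$. I expect this transfer step --- pairing $F$ with a genuine surface ball of $\Omega_L$, applying Dahlberg there, and controlling every pole change by constants uniform in $Q$ and $r$ --- to be the main obstacle, and it is exactly where the NTA hypothesis, as opposed to a bare corkscrew condition, is used.

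It remains to harvest the conclusions. Since the estimate is unchanged when $E$ is replaced by its complement in $\Delta$, it also says: $\surf(A)\leq\theta\,\surf(\Delta)\Rightarrow\omega^X_\Omega(A)\leq(1-\eta)\,\omega^X_\Omega(\Delta)$ for $A\subset\Delta$. In particular, if $\surf(E)=0$ then $\omega^X_\Omega(E\cap\Delta)\leq(1-\eta)\,\omega^X_\Omega(\Delta)$ for every surface ball $\Delta$; as $\omega^X_\Omega$ is doubling \cite{JK}, its Lebesgue density theorem forces $\omega^X_\Omega(E)=0$, so $\omega\ll\sigma$. Conversely, if $\omega^X_\Omega(E)=0$ but $\surf(E)>0$, then at a $\sigma$-density point of $E$ there is a small surface ball $\Delta$ with $\surf(\Delta\setminus E)\leq\theta\,\surf(\Delta)$, whence $\omega^X_\Omega(E)\geq\omega^X_\Omega(E\cap\Delta)\geq\eta\,\omega^X_\Omega(\Delta)>0$, a contradiction; so $\sigma\ll\omega$. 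For the quantitative statement, observe that the single-scale estimate holds for every threshold below a fixed $\theta_0$, and run the standard self-improvement for a pair of doubling measures: given $A\subset\Delta_0$ with $\surf(A)\leq\delta\,\surf(\Delta_0)$, a Calder\'on--Zygmund stopping-time decomposition of $A$ inside $\Delta_0$ relative to $\sigma$ (using a dyadic decomposition of the homogeneous space $(\partial\Omega,\sigma)$) produces disjoint cubes $Q_j$ covering $A$ up to a $\sigma$-null --- hence, by $\omega\ll\sigma$, $\omega$-null --- set, with $\surf(A\cap Q_j)\leq C_0\theta_*\surf(Q_j)<\theta_0\surf(Q_j)$ and $\surf(\bigcup_j Q_j)\leq\theta_*^{-1}\surf(A)\leq(\delta/\theta_*)\,\surf(\Delta_0)$; applying the single-scale bound inside each $Q_j$ and to $\bigcup_j Q_j\subset\Delta_0$ and summing gives $h(t)\leq(1-\eta_0)\,h(t/\theta_*)$ for $t<\theta_*$, where $h(t):=\sup\{\omega_\Omega(A)/\omega_\Omega(\Delta):A\subset\Delta,\ \surf(A)\leq t\,\surf(\Delta)\}$ and $\eta_0\in(0,1)$ is fixed. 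Iterating drives $h(t)\to0$ geometrically as $t\to0$, which is precisely $\omega\in A_\infty(\sigma)$ in the form (\ref{Lsceq}). Since $\omega\in A_\infty(\sigma)$ is equivalent to $\sigma\in A_\infty(\omega)$ and implies $\omega\ll\sigma\ll\omega$ \cite{CF}, the proof is complete.
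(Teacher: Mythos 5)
Your proof is correct and follows essentially the same route as the paper: the David--Jerison Lipschitz approximation with a big boundary piece, Dahlberg's theorem on the approximating domain transferred back by the maximum principle, and the Jerison--Kenig localization to change poles, which yields exactly the single-scale estimates of the paper's Proposition \ref{shrinking} uniformly in $Q$ and $r$ under (\ref{ahlfors}). Note that your single-scale estimate in complemented form is already the paper's definition (\ref{Lsceq}) of $\omega\in A_\infty(\sigma)$, so the concluding Calder\'on--Zygmund self-improvement, while a correct standard argument, is not actually needed for the statement as given.
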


The existence of big pieces of Lipschitz graphs implies that every NTA domain satisfying (\ref{ahlfors}) is uniformly rectifiable; this notion of quantitative rectifiability is developed in \cite{DS}. For a class of domains with non-doubling harmonic measure, on which a variant of the $A_\infty$ condition in Theorem C still holds, see Bennewitz and Lewis \cite{BL}. In the present work, we revisit David and Jerison's construction of Lipschitz  approximations to $\Omega$, focusing on the case when $\Omega$ is a corkscrew domain (e.g.\ when $\Omega$ is NTA). We make two observations. First surface measure on any corkscrew domain is automatically lower Ahlfors regular (Lemma \ref{lowersurf}). Second constructing a Lipschitz approximation at a given location and scale does not require surface measure be upper Ahlfors regular. Therefore one may relax the assumption that (\ref{ahlfors}) holds uniformly at all scales in Theorem \ref{Lnta}. This is our main result.

\begin{theorem}\label{macthm} Let $\Omega\subset\RR^n$ be NTA. Then the set \begin{equation}\label{macthmeq}A=\left\{Q\in\partial\Omega:\liminf_{r\downarrow 0}\frac{\surf(\Delta(Q,r))}{r^{n-1}}<\infty\right\}\end{equation} is $(n-1)$-rectifiable and $\omega\res A\ll\sigma\res A\ll\omega\res A$.\end{theorem}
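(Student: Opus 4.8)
Write $A=\bigcup_{M\in\mathbb{N}}A_M$, where $A_M=\{Q\in\partial\Omega:\liminf_{r\downarrow0}\surf(\Delta(Q,r))/r^{n-1}<M\}$, so each $Q\in A_M$ has arbitrarily small \emph{admissible} scales $r$, meaning $\surf(\Delta(Q,r))<Mr^{n-1}$. In particular every point of $A$ has a neighbourhood on which $\surf\res\partial\Omega$ is finite, hence $\surf\res A$ is $\sigma$-finite and the density and structure theory of $(n-1)$-dimensional Hausdorff measure applies locally near $A$. For $Q\in A_M$ and $r$ admissible I apply the construction of David and Jerison \cite{DJ} in its revisited form -- the point being that the upper Ahlfors bound (\ref{ahlfors}) enters only at the single scale $r$, while lower Ahlfors regularity is supplied by Lemma \ref{lowersurf} -- to obtain a Lipschitz subdomain $\Omega_{Q,r}\subseteq B(Q,r)\cap\Omega$, with Lipschitz character depending only on $n$, the NTA constants and $M$, which contains an interior corkscrew point $X_r$ at distance comparable to $r$ from $\partial\Omega_{Q,r}$ and satisfies $\surf(S_{Q,r})\ge c_1 r^{n-1}$, where $S_{Q,r}:=\partial\Omega_{Q,r}\cap\partial\Omega$ and $c_1>0$ depends only on $n$, the NTA constants and $M$.

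\emph{Rectifiability.} Decompose the (locally $\surf$-finite) set $\partial\Omega$ near $A$ into its rectifiable part $R$ and its purely unrectifiable part $U$; it is enough to show $\surf(A\cap U)=0$. If not, fix $M$ with $\surf(A_M\cap U)>0$ and a point $Q\in A_M\cap U$ at which $\surf(R\cap B(Q,\rho))=o(\rho^{n-1})$ as $\rho\downarrow0$ -- this holds at $\surf$-a.e.\ such point (Besicovitch differentiation, together with finiteness of the upper density of $\partial\Omega$ a.e.). Each $S_{Q,r}$ lies on a Lipschitz graph, hence is rectifiable and so contained in $R$ up to a $\surf$-null set; therefore $\surf(R\cap B(Q,r))\ge\surf(S_{Q,r})\ge c_1 r^{n-1}$ along the admissible scales $r\downarrow 0$ at $Q$, contradicting $\surf(R\cap B(Q,\rho))=o(\rho^{n-1})$. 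Thus $A$ is $(n-1)$-rectifiable.

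\emph{Mutual absolute continuity.} On each $\Omega_{Q,r}$ I combine Dahlberg's theorem \cite{D} (harmonic measure for $\Omega_{Q,r}$ and $\surf$ are mutually absolutely continuous on $\partial\Omega_{Q,r}$, with $A_\infty$), the maximum-principle monotonicity $\omega_{\Omega_{Q,r}}^X(F)\le\omega_\Omega^X(F)$ for Borel $F\subseteq S_{Q,r}$ and $X\in\Omega_{Q,r}$, and the change-of-pole and localization estimates for harmonic measure on NTA domains \cite{JK}. These yield two comparisons, uniform over admissible $(Q,r)$: $\surf(S_{Q,r})\ge c_2\,\surf(\Delta(Q,r))$ (immediate from admissibility) and $\omega_\Omega(S_{Q,r})\ge c_3\,\omega_\Omega(\Delta(Q,r))$ (Dahlberg gives $\omega_{\Omega_{Q,r}}^{X_r}(S_{Q,r})\ge c>0$, monotonicity passes this to $\omega_\Omega^{X_r}$, change-of-pole passes it to the fixed pole). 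Now let $E\subseteq A$ be Borel. If $\surf(E)>0$, restrict to a region where $\surf$ is finite and to one $A_M$ and pick a $\surf$-density point $Q\in A_M$ of $E$: for small admissible $r$, $\surf(E\cap S_{Q,r})\ge\surf(S_{Q,r})-\surf(\Delta(Q,r)\setminus E)\ge c_1 r^{n-1}-\tfrac{c_1}{2}r^{n-1}>0$, so Dahlberg gives $\omega_{\Omega_{Q,r}}(E\cap S_{Q,r})>0$ and monotonicity gives $\omega(E)\ge\omega_\Omega(E\cap S_{Q,r})>0$. If instead $\surf(E)=0$ but $\omega(E)>0$, pick an $\omega$-density point $Q\in A_M$ of $E$, so that for small admissible $r$, $\omega_\Omega(E\cap S_{Q,r})\ge c_3\,\omega_\Omega(\Delta(Q,r))-\omega_\Omega(\Delta(Q,r)\setminus E)>0$; it then remains to prove $\omega_\Omega\res S_{Q,r}\ll\surf\res S_{Q,r}$. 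For Borel $G\subseteq S_{Q,r}$ with $\surf(G)=0$, set $u(X)=\omega_\Omega^X(G)$ and restrict $u$ to $\Omega_{Q,r}$: its nontangential boundary values on $S_{Q,r}$ equal $\chi_G$ at $\omega_\Omega$-a.e.\ point, hence at $\omega_{\Omega_{Q,r}}$-a.e.\ point by the monotonicity above, so the Poisson representation on the NTA domain $\Omega_{Q,r}$ and Dahlberg give
\[
u(X_r)=\omega_{\Omega_{Q,r}}^{X_r}(G)+\int_{\partial\Omega_{Q,r}\setminus\partial\Omega}u\,d\omega_{\Omega_{Q,r}}^{X_r}\le 0+\omega_{\Omega_{Q,r}}^{X_r}\!\big(\partial\Omega_{Q,r}\setminus\partial\Omega\big)\le 1-c .
\]
Iterating this bound over the scales available along $\partial\Omega_{Q,r}$, using Harnack's inequality and change-of-pole to move between poles (this is the mechanism of David and Jerison's proof of Theorem \ref{Lnta}), replaces $1-c$ by $(1-c)^k\to 0$, whence $\omega_\Omega(G)=0$. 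The two cases together give $\omega\res A\ll\surf\res A\ll\omega\res A$.

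The step I expect to cause the most trouble is the last one -- upgrading ``$\omega_\Omega$ of a $\surf$-null subset of the big piece $S_{Q,r}$ is at most $1-c$'' to ``it is $0$''. Since the shared boundary $S_{Q,r}$ is merely a big piece of $\partial\Omega$ and not a full surface ball, this is a stronger localization than the one invoked directly in \cite{JK}, and running David and Jerison's argument for it carefully is the crux; the content of the revisited construction is that this argument, like the construction of $\Omega_{Q,r}$ itself, uses the upper regularity of $\surf$ only one scale at a time, so that the global hypothesis (\ref{ahlfors}) of Theorem \ref{Lnta} can be traded for the pointwise $\liminf$ condition defining $A$. A secondary but genuine nuisance throughout is the measure theory -- selection of density points, $\sigma$-finiteness, and the Vitali-type covering -- which needs care because neither $\surf\res A$ nor $\omega\res A$ is assumed globally finite.
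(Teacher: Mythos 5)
Your overall architecture --- Lipschitz approximations at the admissible scales with constants controlled by $n$, the NTA constants and the density bound $M$; Dahlberg's theorem on the approximating domain; the maximum principle; Jerison--Kenig localization; and a density-point/covering argument --- is the paper's. Your rectifiability argument (zero density of the rectifiable part at $\surf$-a.e.\ point of the unrectifiable part, against the lower bound $\surf(S_{Q,r})\geq c_1r^{n-1}$ along admissible scales) is a correct alternative to the paper's Corollary \ref{corkrect}, and your proof that $\surf(E)>0\Rightarrow\omega(E)>0$ is sound. But the other direction contains a genuine gap, and it is exactly the step you flagged. You reduce $\surf(E)=0\Rightarrow\omega(E)=0$ to proving $\omega_\Omega\res S_{Q,r}\ll\surf\res S_{Q,r}$, and propose to obtain this by iterating the single-scale bound $u(X_r)\leq 1-c$ ``over the scales available along $\partial\Omega_{Q,r}$'' as in David--Jerison. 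That iteration needs a big Lipschitz piece, with uniform constants, in \emph{every} surface ball meeting $G$ at \emph{every} small scale --- which is precisely the upper Ahlfors regularity hypothesis the theorem is discarding. Worse, since $\surf(G)=0$, no density argument can supply admissible scales at points of $G$, and points of $S_{Q,r}$ need not even belong to $A$. The mechanism does not close.

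The paper never proves absolute continuity of $\omega$ on the big piece; it only uses the single-scale estimate in the \emph{reverse} direction, applied to the complement. Concretely, Lemma \ref{etalemma} supplies $\eta=\eta(n,M,\gamma)$ with $\omega_L^a(F)\leq\eta\Rightarrow\surf(F)\leq(\psi/2)r^{n-1}$; its contrapositive applied to $F=S_{Q,r}\setminus E$, which satisfies $\surf(S_{Q,r}\setminus E)=\surf(S_{Q,r})\geq\psi r^{n-1}$ because $\surf(E)=0$, gives $\omega_L^a(S_{Q,r}\setminus E)>\eta$, hence $\omega^a(S_{Q,r}\setminus E)>\eta$ by the maximum principle, hence $\omega(\Delta(Q,r)\setminus E)\geq(\eta/C)\,\omega(\Delta(Q,r))$ by Lemma \ref{localize}. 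This already contradicts $Q$ being an $\omega$-density point of $E$, so your density-point framework absorbs it directly; the paper instead packages it as the one-sided implication (\ref{ainfty2}) of Proposition \ref{shrinking} and concludes with the Vitali covering (Theorem \ref{vitali}) plus outer regularity, getting $\omega(E)\leq\varepsilon\,\omega(U)$ for every open $U\supset E$ and hence $\omega(E)\leq\varepsilon\,\omega(E)=0$. Replacing your final iteration by this complement trick repairs the proof. A secondary caveat: the uniformity over admissible $(Q,r)$ of the Dahlberg $A_\infty$ constants, which you assert, is not automatic and is the content of the star-shaped decomposition carried out in \S\ref{section22}.
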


If $\surf\res \partial\Omega$ is Radon, then $\surf(\partial\Omega\setminus A)=0$. Hence Theorem \ref{FMnta} follows directly from Theorem \ref{macthm}. It remains unknown whether the F. and M. Riesz theorem has a full analogue on NTA domains in higher dimensions. However, in view of Theorem \ref{macthm}, one can reverse the arrow in (\ref{FMntaeq}) if and only if

\begin{conjecture} \label{FMconj} Let $\Omega\subset\RR^n$ be NTA. If $\surf\res\partial\Omega$ is Radon, then \begin{equation} B= \left\{Q\in\partial\Omega: \lim_{r\downarrow 0} \frac{\surf(\Delta(Q,r))}{r^{n-1}}=\infty\right\}\end{equation} has harmonic measure zero. \end{conjecture}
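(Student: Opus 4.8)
\emph{Reduction.} Since $\liminf_{r\downarrow 0}\surf(\Delta(Q,r))/r^{n-1}=\infty$ holds precisely when the full limit is $\infty$, the set $B$ is exactly the complement $\partial\Omega\setminus A$ of the rectifiable set $A$ of Theorem~\ref{macthm}. Because $\surf\res\partial\Omega$ is Radon, the standard upper-density bound for Hausdorff measures (for a Radon measure $\mu$ on $\RR^n$ one has $\surf(\{Q:\Theta^{*,n-1}(\mu,Q)=\infty\})=0$) gives $\sigma(B)=0$ immediately; this is also how $\partial\Omega$ turns out to be $(n-1)$-rectifiable in Theorem~\ref{FMnta}, since then $\sigma$ is carried by $A$. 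Combining $\sigma(B)=0$ with $\omega\res A\ll\sigma$ from Theorem~\ref{macthm}, one checks that Conjecture~\ref{FMconj} is equivalent to $\omega\ll\sigma$ on all of $\partial\Omega$, hence --- as $\omega$ and $\sigma$ both live on $\partial\Omega$ and $\sigma=\surf\res\partial\Omega$ --- to the absolute continuity $\omega\ll\surf$ of harmonic measure with respect to $(n-1)$-dimensional Hausdorff measure. This is the statement I would attack (it is essentially classical for $n=2$, so the content lies in dimensions $n\ge 3$); note that it genuinely uses global finiteness of $\surf\res\partial\Omega$, since on a Wolff snowflake harmonic measure concentrates on a set of Hausdorff dimension strictly below $n-1$, which is exactly why such snowflakes must have $\surf(\partial\Omega)=\infty$.

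\emph{The main argument.} The plan is to convert analytic information about the Green function into geometric information about $\omega$ via Riesz transforms. Suppose for contradiction $\omega(B)>0$, fix a pole $X_0$, write $\omega=\omega^{X_0}$, and pick a compact $K\subset B$ with $\omega(K)>0$, so $\surf(K)=0$. Extended by zero to $\RR^n$, the Green function $G=G(X_0,\cdot)$ has distributional Laplacian $\omega-\delta_{X_0}$ up to a dimensional constant, whence $\nabla G$ agrees, away from $X_0$, with the $(n-1)$-dimensional vector Riesz transform $R\omega(x)=\int (x-y)/|x-y|^{n}\,d\omega(y)$ modulo a smooth bounded term. Thus a uniform $L^2(\omega\res K)$ bound for the truncated transforms $R_\varepsilon(\omega\res K)$ is equivalent to a corresponding bound for $\nabla G$ near $K$, and the latter should be accessible from the corkscrew and Harnack-chain geometry of $\Omega$, the Jerison--Kenig comparison $\omega(\Delta(Q,r))\approx r^{n-2}G(X_0,A(Q,r))$ at a corkscrew point $A(Q,r)$ \cite{JK}, and the local finiteness of $\surf\res\partial\Omega$ (which controls the single-layer potentials entering the estimate). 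By the resolution of the David--Semmes problem for the codimension-one Riesz transform (Nazarov--Tolsa--Volberg) together with Tolsa's square-function characterization of rectifiability, such an $L^2$ bound would force $\omega\res K$ to be $(n-1)$-rectifiable and, on the part of $K$ of positive $\omega$-measure where $\omega$ has finite upper $(n-1)$-density, mutually absolutely continuous with $\surf\res K$. Since $\surf(K)=0$, this would yield $\omega(K)=0$, the desired contradiction, and therefore $\omega(B)=0$.

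\emph{Main obstacle.} The whole difficulty is in the middle paragraph, and it measures precisely the distance between this paper and the conjecture. The David--Jerison ``big pieces of Lipschitz graphs'' construction behind Theorem~\ref{macthm} is built around the bound $\liminf_r\surf(\Delta(Q,r))/r^{n-1}<\infty$, which fails throughout $B$, so it provides no information on the $\sigma$-null remnant; a genuinely harmonic-analytic input is needed there instead. Concretely one must (i) establish the $L^2(\omega\res K)$ Riesz transform estimate using only \emph{local} finiteness of $\surf$ near $K$ together with the \emph{global} NTA geometry --- with no quantitative (e.g.\ Ahlfors) regularity of $\partial\Omega$ available, and without being able to assume a priori that $\omega$ has finite $(n-1)$-density on $K$, which is where the argument is most delicate --- and (ii) upgrade the geometric conclusion ``$\omega\res K$ is carried by an $(n-1)$-rectifiable set'' to the measure-theoretic conclusion ``$\omega\res K\ll\surf\res K$'', since a rectifiable set may itself be $\surf$-null and only the absolute continuity contradicts $\surf(K)=0$. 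Either ingredient would settle the conjecture, and each appears to require the full strength of the modern theory of harmonic measure on rough domains rather than the elementary methods of the present paper.
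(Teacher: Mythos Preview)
The statement you are addressing is labelled \emph{Conjecture} in the paper, and the paper does not prove it; the surrounding text says explicitly that ``It remains unknown whether the F.\ and M.\ Riesz theorem has a full analogue on NTA domains in higher dimensions,'' and that the reverse implication in (\ref{FMntaeq}) holds \emph{if and only if} Conjecture~\ref{FMconj} is true. So there is no proof in the paper to compare your attempt against.

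Your \emph{Reduction} paragraph is correct and matches exactly what the paper asserts: $B=\partial\Omega\setminus A$, the Radon hypothesis forces $\surf(B)=0$ by the standard upper-density theorem for Hausdorff measures, and hence the conjecture is equivalent to $\omega\ll\sigma$ on all of $\partial\Omega$. This is precisely the content of the sentence in the paper immediately preceding the conjecture.

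Your \emph{Main argument} is not a proof but a research programme, and you say so yourself in the \emph{Main obstacle} paragraph. The two gaps you isolate are real: (i) an $L^2(\omega\res K)$ bound for the truncated Riesz transforms with no Ahlfors regularity and no a priori finite $(n-1)$-density of $\omega$ on $K$, and (ii) passing from ``$\omega\res K$ is $(n-1)$-rectifiable'' to ``$\omega\res K\ll\surf$''. Neither follows from anything in the present paper, whose Lipschitz-approximation machinery (Theorem~\ref{approxthm}, Proposition~\ref{PropG}) is built precisely on the hypothesis $\surf(\Delta(Q,r))/r^{n-1}<\infty$ that fails everywhere on $B$. Your outline is, in hindsight, the right direction --- the conjecture was later settled along these lines via the Nazarov--Tolsa--Volberg solution of the codimension-one David--Semmes problem combined with rectifiability criteria for harmonic measure --- but as written it is a sketch of what would need to be done, not a proof.
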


The paper is organized as follows. In sections \ref{section2}--\ref{section3}, we demonstrate how to build a Lipschitz domain $\Omega_L$ inside of a corkscrew domain $\Omega$ at a given location $Q\in\partial\Omega$ and scale $r>0$ such that $\surf(\Delta(Q,r))<\infty$. At each step of the construction, we keep careful track of dependencies on parameters. The common boundary $\partial\Omega\cap\partial\Omega_L$ of a domain and its approximation has size determined by the dimension $n$ and corkscrew constant $M$ of $\Omega$; the Lipschitz constant and character of $\Omega_L$ only depends on $n$, $M$ and the ratio $\gamma=\surf(\Delta(Q,r))/r^{n-1}$. Section \ref{section21} outlines the construction of $\Omega_L$ using cones of a fixed aperture and reduces the approximation theorem (Theorem \ref{approxthm}) to choosing the correct slope of the defining cones (Proposition \ref{PropG}). In section \ref{section22} we quantify the relationship between harmonic and surface measures on the Lipschitz domain $\Omega_L$. The main tool is Jerison and Kenig's proof \cite{JKbull} of Dahlberg's theorem for star-shaped Lipschitz domains. The construction of $\Omega_L$ is completed in section \ref{section3}, where we verify Proposition \ref{PropG} by following the proof of the proposition in \cite{DJ}.

Section 4 is devoted to absolute continuity of harmonic measure on NTA domains of locally finite perimeter. We derive Theorem 1.2 from three main ingredients: good Lipschitz approximations to corkscrew domains (Theorem \ref{approxthm} and Lemma \ref{etalemma}), a localization property of harmonic measure on NTA domains (Lemma \ref{localize}), and a Vitali type covering theorem for Radon measures in $\RR^n$ (Theorem \ref{vitali}). An NTA domain is a corkscrew domain that also satisfies a Harnack chain condition. The proof of absolute continuity that we give actually shows Theorem 1.2 is valid on any corkscrew domain whose harmonic measure satisfies the conclusion of Lemma \ref{localize}.

Two applications of the main theorem are presented in section 5. First we prove that every Wolff snowflake (studied in \cite{W} and \cite{LVV}) has infinite surface measure. Second we compute the (upper) Hausdorff dimension of harmonic measure on NTA domains of locally finite perimeter: if $\Omega\subset\RR^n$ is NTA and $\surf\res\partial\Omega$ is Radon, then $\Hdim\omega=n-1$. This section can be read independently of \S\S\ref{section2}--\ref{section4}.

\section{Lipschitz Approximation to Corkscrew Domains}
\label{section2}

A closed set $\Sigma\subset\RR^n$ has \emph{big pieces of Lipschitz graphs} (often abbreviated \emph{BPLG}) if (i) $\surf\res \Sigma$ is Ahlfors regular and (ii) there are constants $\varphi>0$, $h>0$ and $r_0>0$ such that for every $Q\in\Sigma$ and $0<r<r_0$ there exists (up to an isometry in $\RR^n$) a graph $\Gamma\subset\RR^n$ of an $h$-Lipschitz function such that $\surf(B(Q,r)\cap \Sigma\cap \Gamma)\geq \varphi r^{n-1}$. In \cite{DJ}, David and Jerison proved if $\Sigma\subset\RR^n$ has Ahlfors regular surface measure and the open set $\RR^n\setminus\Sigma$ satisfies a ``two disk" condition, then $\Sigma$ has BPLG. Reading their proof carefully reveals that the upper bound in the Ahlfors regularity condition (\ref{ahlfors}) is not used to build Lipschitz graphs $\Gamma$. We verify this claim over the next two sections, in the special case that $\Sigma=\partial\Omega$ and $\Omega\subset\RR^n$ is a corkscrew domain. (This is the case applicable for Theorem \ref{macthm} and using the corkscrew condition instead of the two disk condition shortens the proof of several lemmas in \S\ref{section3}.)

\begin{definition}An open set $\Omega\subset\RR^n$ satisfies the \emph{corkscrew condition} with constants $M>1$ and $R>0$ provided that for every $Q\in\partial\Omega$ and every $0<r<R$ there exists a \emph{non-tangential point} $A=A(Q,r)\in\Omega$ such that $|A-Q|<r$ and $\dist(A,\partial\Omega)> r/M$.\end{definition}

\begin{definition}An open set $\Omega\subset\RR^n$ is a \emph{corkscrew domain} if $\Omega$ is connected and both $\Omega$ and $\RR^n\setminus\overline{\Omega}$ satisfy the corkscrew condition with constants $M>1$ and $R>0$.
\end{definition}

When $\Omega\subset\RR^n$ is a corkscrew domain, we write $A^+(Q,r)$ for non-tangential points in the interior $\Omega^+=\Omega$ and write $A^-(Q,r)$ for non-tangential points in the exterior $\Omega^-=\RR^n\setminus\overline{\Omega}$ of $\Omega$. Notice the definition does not require the exterior of a corkscrew domain to be connected.

Let us start with a simple application of the interior and the exterior corkscrew conditions. Surface measure on a corkscrew domain is always lower Ahlfors regular.  Here we normalize Hausdorff measure so that $\surf(B_{n-1}(0,1))=\omega_{n-1}$.

\begin{lemma}\label{lowersurf} There exists a constant $\beta=\beta(n,M)>0$ such that for every corkscrew domain $\Omega\subset\RR^n$ with constants $M>1$ and $R>0$, \begin{equation}\surf(\Delta(Q,r)) \geq \beta r^{n-1}\quad\text{for all }Q\in\partial\Omega\text{ and } 0<r<R.\end{equation}\end{lemma}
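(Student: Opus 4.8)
The plan is to exploit the interior and exterior corkscrew points to produce two well-separated balls $B(A^+(Q,r/2),\rho)\subset\Omega$ and $B(A^-(Q,r/2),\rho)\subset\RR^n\setminus\overline\Omega$ of comparable radius $\rho\gtrsim r/M$, both sitting inside $B(Q,r)$. The point is that any path from the first ball to the second must cross $\partial\Omega$, and more quantitatively, for a large family of hyperplanes through $B(Q,r)$ the orthogonal projection of $\partial\Omega\cap B(Q,r)$ onto the hyperplane must cover a definite $(n-1)$-dimensional region. I would then conclude by the standard fact that $\surf$ does not increase under $1$-Lipschitz projections, so $\surf(\Delta(Q,r))$ is bounded below by the $(n-1)$-measure of this projected region, which is $\gtrsim_{n,M} r^{n-1}$.

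Concretely, set $r'=r/2$ and pick $A^\pm=A^\pm(Q,r')$, so $|A^\pm-Q|<r'$ and $\dist(A^\pm,\partial\Omega)>r'/M$. Write $\rho=r'/M$ and let $v=(A^+-A^-)/|A^+-A^-|$; note $|A^+-A^-|<2r'=r$. Let $\pi$ denote orthogonal projection onto the hyperplane $H$ through $Q$ with normal $v$. I claim $\pi(\Delta(Q,r))\supset\pi(B(A^+,\rho))\cap\pi(B(A^-,\rho))$, a set containing an $(n-1)$-ball of radius $\rho$: indeed, for any $x$ in this projected intersection, the line $\ell_x=x+\RR v$ meets $B(A^+,\rho)\subset\Omega$ and meets $B(A^-,\rho)\subset\RR^n\setminus\overline\Omega$, hence $\ell_x$ meets $\partial\Omega$; moreover one checks the relevant portion of $\ell_x$ lies in $B(Q,r)$ because $A^+,A^-\in B(Q,r')$ and $\rho<r'$, so the crossing point lies in $\Delta(Q,r)$. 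Since $\pi$ is $1$-Lipschitz, $\surf(\Delta(Q,r))\geq\surf(\pi(\Delta(Q,r)))\geq\omega_{n-1}\rho^{n-1}=\omega_{n-1}(2M)^{-(n-1)}r^{n-1}$, giving the lemma with $\beta=\beta(n,M)=\omega_{n-1}(2M)^{-(n-1)}$.

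The main obstacle, and the step deserving the most care, is verifying that the crossing point of $\ell_x$ with $\partial\Omega$ genuinely lies in $B(Q,r)$ rather than merely somewhere on the infinite line; this requires restricting attention to the compact segment of $\ell_x$ between its entry into $B(A^+,\rho)$ and $B(A^-,\rho)$ and using a connectedness/intermediate-value argument on that segment together with the triangle-inequality bound $\rho+r'<r$. A secondary point is to confirm that $\pi(B(A^+,\rho))\cap\pi(B(A^-,\rho))$ really contains a full $(n-1)$-ball of radius $\rho$: because $v$ is parallel to $A^+-A^-$, the two projected balls $\pi(B(A^\pm,\rho))$ are concentric $(n-1)$-balls in $H$ of radius $\rho$ (translates of each other are not needed — both have the same center $\pi(A^+)=\pi(A^-)$ up to the fact that $A^+-A^-\parallel v$, which forces equal centers), so their intersection is exactly an $(n-1)$-ball of radius $\rho$. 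Everything else is routine.
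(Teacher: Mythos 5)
Your proposal is correct and is essentially the paper's own argument: both pick interior and exterior corkscrew points at a comparable scale, project $\Delta(Q,r)$ orthogonally onto a hyperplane perpendicular to the segment joining them, and use convexity of $B(Q,r)$ plus the fact that $\surf$ does not increase under the $1$-Lipschitz projection to bound $\surf(\Delta(Q,r))$ below by the measure of an $(n-1)$-ball of radius $\gtrsim_{n,M} r$. The only difference is the choice of scale (the paper uses radius $tr$ with $t=M/(M+1)$, yielding $\beta=\omega_{n-1}/(M+1)^{n-1}$ rather than your $\omega_{n-1}/(2M)^{n-1}$), which is immaterial.
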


\begin{proof} Write $t=M/(M+1)$ and choose non-tangential points $a^\pm=A^\pm(Q,tr)$ of $\Omega^\pm$. Then $B(a^\pm,tr/M)\subset\Omega^\pm\cap B(Q,r)$. Let $\pi$ denote orthogonal projection onto a plane $P$ (of codimension 1) orthogonal to the line segment connecting $a^+$ and $a^-$. Assign $D^\pm$ to be the $(n-1)$-dimensional disk of radius $tr/M$ inside of $B(a^\pm, tr/M)$ and parallel to $P$. Because $D^+$ and $D^-$ lie in different connected components of $\RR^n\setminus\partial\Omega$ and the ball $B(Q,r)$ is convex, any line segment from $D^+$ to $D^-$ must intersect $\Delta(Q,r)$. Hence, since $\pi(D^+)=\pi(D^-)$ is a disk of radius $tr/M=r/(M+1)$ (Fig. 1), \begin{equation}\surf(\Delta(Q,r)) \geq \surf(\pi(\Delta(Q,r))) \geq \surf(\pi(D^\pm))=\omega_{n-1}\left(\frac{r}{M+1}\right)^{n-1}.\end{equation} Thus $\beta=\omega_{n-1}/(M+1)^{n-1}$ suffices.\end{proof}

\begin{figure}[t]
\includegraphics[width=.45\textwidth]{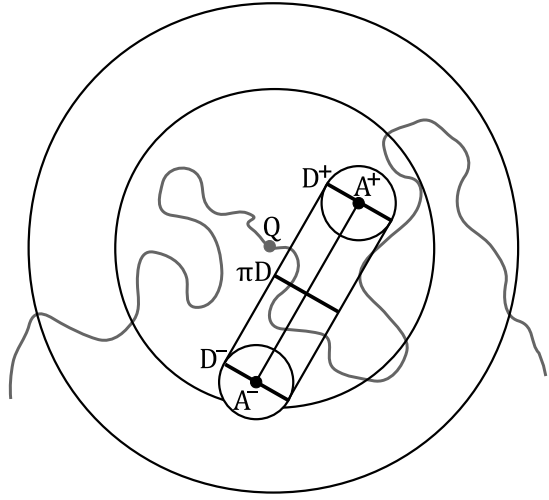}
\caption{Proof of Lemma 2.3}
\end{figure}

Our main goal in this section is to construct Lipschitz domains inside corkscrew domains with substantial intersection on the boundary. An important observation is the size of the big pieces of Lipschitz graphs depends only on the corkscrew constant.

\begin{theorem}\label{approxthm} Let $\Omega\subset\RR^n$ be a corkscrew domain with constants $M>1$ and $R>0$. There exists a constant $\psi=\psi(n,M)>0$ with the following property. For every point $Q\in\partial\Omega$ and every positive number $r<R$ such that $\surf(\Delta(Q,r))<\infty$, \begin{enumerate}
\item[($\star$)]
for every non-tangential point $a=A^+(Q,r/2)$ there exists a Lipschitz domain $\Omega_L\subset\RR^n$ such that $a\in\Omega_L\subset\Omega\cap B(Q,r)$, $\surf(\partial\Omega_L\cap\partial\Omega)\geq \psi r^{n-1}$, and $\partial\Omega_L\cap\partial\Omega$ is contained in (the rigid motion of) a single Lipschitz graph.
\end{enumerate} Moreover, we can find $\Omega_L$ in $(\star)$ so that the Lipschitz constant and character of $\Omega_L$ depend only on $n$, $M$ and the ratio $\gamma=\surf(\Delta(Q,r))/r^{n-1}$.
\end{theorem}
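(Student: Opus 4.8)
The plan is to follow the David--Jerison construction, isolating the single step where the opening of the defining cones --- hence the Lipschitz constant of the approximating domain --- must be allowed to depend on $\gamma$. Fix $Q$, $r$ and the prescribed point $a=A^+(Q,r/2)$; choose $q\in\partial\Omega$ with $d:=\dist(a,\partial\Omega)=|a-q|$, so $r/(2M)<d<r/2$ and $B(a,d)\subset\Omega$ is internally tangent to $\partial\Omega$ at $q$. Put the origin at $q$, take $e_n=(a-q)/d$ as the vertical axis (so $a=de_n$), write $x'$ for the component of $x$ orthogonal to $e_n$, and for $s>0$ let $X^\pm(x)=\{\,y:\pm(y-x)\cdot e_n>s\,|(y-x)'|\,\}$ be the open one-sided cones at $x$ about $\pm e_n$. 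Note the tangency forces $\partial\Omega\cap B(q,d)\subset\RR^n\setminus B(a,d)$, i.e.\ $\partial\Omega$ lies essentially below the horizontal plane through $q$ nearby. I would reduce the theorem to the corkscrew-domain version of the main construction of \cite{DJ} (Proposition 2.5 in the paper): \emph{there are $c_0=c_0(n,M)\in(0,1)$ and a slope $s=s(n,M,\gamma)\ge1$ such that the good set}
\[
G=\bigl\{\,x\in\Delta(q,c_0d):\ X^+(x)\cap B(Q,r)\subset\Omega^+\ \text{and}\ X^-(x)\cap B(Q,r)\subset\Omega^-\,\bigr\}
\]
\emph{satisfies} $\surf(G)\ge\tfrac12\surf(\Delta(q,c_0d))$. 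Combined with Lemma \ref{lowersurf} and $d>r/(2M)$ this gives $\surf(G)\ge\tfrac12\beta(n,M)(c_0d)^{n-1}\ge\psi(n,M)\,r^{n-1}$; since only the lower regularity of Lemma \ref{lowersurf} was used, $\psi$ depends only on $n,M$, which is the whole point.

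Granting this, $\Omega_L$ is built by ``tenting'' over $G\cup\{q\}$ and everything else is bookkeeping. Using $s\ge1$ and the tangency of $B(a,d)$ one checks the forward cone condition at $q$ in a corkscrew ball: $X^+(q)\cap B(q,d)\setminus\{q\}\subset B(a,d)\subset\Omega$. For $x\in G$ and $y\in(G\cup\{q\})\cap\partial\Omega$, the point $y$ lies in $\partial\Omega$ and so avoids $X^+(x)\cup X^-(x)\subset\Omega^+\cup\Omega^-$ (inside $B(Q,r)$); hence $|(y-x)\cdot e_n|\le s|(y-x)'|$ for all pairs in $G\cup\{q\}$, so $\pi:\RR^n\to e_n^\perp$ is injective on $G\cup\{q\}$, that set is the graph of an $s$-Lipschitz function, and its minimal $s$-Lipschitz extension $g(z)=\inf\{\,x\cdot e_n+s|z-\pi(x)|:x\in G\cup\{q\}\,\}$ has graph $\Gamma$ --- a \emph{single} Lipschitz graph of constant $s=s(n,M,\gamma)$ containing $G$. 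Let $\Omega_L$ be the connected component of $a$ in
\[
\bigl\{(z,t):t>g(z)\bigr\}\cap B(Q,r)\cap\bigl(B(q,d)\cup B(a,d)\bigr).
\]
Then: (i) $\{t>g(z)\}=\bigcup_{x\in G\cup\{q\}}X^+(x)$, which inside $B(Q,r)\cap(B(q,d)\cup B(a,d))$ lies in $\Omega$ (the $G$-cones by the definition of $G$, the $q$-cone by the previous sentence and $B(a,d)\subset\Omega$; and $q\notin\{t>g(z)\}$ since $q\cdot e_n=g(\pi(q))$), so $\Omega_L\subset\Omega\cap B(Q,r)$; (ii) $\partial\Omega$ meets the closure of $\{t>g(z)\}$, within that region, only along $\Gamma$, while $G\subset B(q,c_0d)$ sits on $\Gamma$ away from the truncating spheres (shrinking $c_0$ if needed), so $\partial\Omega_L\cap\partial\Omega\subset\Gamma$ and $\surf(\partial\Omega_L\cap\partial\Omega)\ge\surf(G)\ge\psi(n,M)\,r^{n-1}$; (iii) $\Omega_L$ is a Lipschitz domain whose constant and character depend only on $n,M,s$, since $\partial\Omega_L$ is the $s$-Lipschitz graph $\Gamma$ glued to spherical pieces meeting it transversally ($s\ge1$); (iv) $a=de_n\in\Omega_L$, because $q$ is in the generating set for $g$ and $\pi(a)=\pi(q)=0$, so $g(\pi(a))=0<d=a\cdot e_n$, while $a\in B(a,d)\cap B(Q,r)$. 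The one place needing genuine care is reconciling the truncating radii with $B(Q,r)$ when $a$ lies near $\partial B(Q,r/2)$ --- precisely the dependence-tracking the paper emphasizes it carries out.

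The main obstacle is Proposition 2.5: producing the slope $s=s(n,M,\gamma)$ for which $\surf(G)\ge\tfrac12\surf(\Delta(q,c_0d))$. Here I would run the David--Jerison argument, adapted so that the interior and exterior corkscrew conditions replace their two-disk condition and so that the \emph{upper} Ahlfors bound is never invoked. Write $\Delta(q,c_0d)=G\sqcup B$; a point $x$ is bad exactly when $\partial\Omega$ enters one of the truncated cones $X^\pm(x)\cap B(Q,r)$. The interior and exterior corkscrew points at scales comparable to $d$ sit deep inside $X^+(\cdot)$, respectively $X^-(\cdot)$, once $e_n$ is fixed, which confines this to intermediate scales and, for each bad $x$, produces a witness $y\in\partial\Omega$ with $x\in X^\mp(y)$ and $|x-y|$ at most a fixed multiple of $d$. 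Covering $B$ by such cone-sectors anchored on $\partial\Omega$, disjointifying, and projecting onto suitable hyperplanes, one bounds $\surf(B)$ by $\surf(\Delta(q,c_0d))$ times a geometric factor that can be made as small as one wishes by taking $s$ large; $\gamma$ enters because the total surface mass to distribute among the sectors is at most $\surf(\Delta(Q,r))\le\gamma r^{n-1}$. Choosing $s=s(n,M,\gamma)$ so that $\surf(B)\le\tfrac12\surf(\Delta(q,c_0d))$ finishes it. I expect the precise accounting in this covering estimate --- exactly how $s$ is forced by $\gamma$ and $M$, and the check that no upper regularity slips in anywhere --- to be the real work, and it is what \S3 carries out by following the corresponding proposition in \cite{DJ}.
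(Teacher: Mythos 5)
Your outline follows the same broad architecture as the paper---reduce to a statement that the set of ``cone points'' is large for a slope depending only on $n,M,\gamma$, then tent over it---but the reduction is misstated in a way that matters precisely because upper Ahlfors regularity is unavailable, and the genuinely hard step is deferred rather than proved. The central problem is your Proposition-2.5 analogue, $\surf(G)\geq\tfrac12\surf(\Delta(q,c_0d))$: this asserts control of the \emph{surface measure} of the bad set $B$, but the covering/sector argument you sketch (and the argument the paper actually runs) only controls the Lebesgue measure of the \emph{projection} $\pi(B)$ onto the reference hyperplane. Without an upper bound $\surf(\Delta(Q,t))\lesssim t^{n-1}$ at all scales, there is no way to pass from a small projection back to small surface measure---the excess mass permitted by $\gamma<\infty$ can all sit on boundary points whose cones are pierced. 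This is exactly why Proposition \ref{PropG} is phrased as $\surf(\pi(T)\setminus\pi(T_\Gamma))\leq\varepsilon r^{n-1}$, why Lemma \ref{piTLemma} lower-bounds $\surf(\pi(T))$ rather than $\surf(T)$, and why the final estimate goes through $\surf(T_\Gamma)\geq\surf(\pi(T_\Gamma))$. The weaker projection statement does suffice for $(\star)$, so the theorem is not in danger, but as written your key claim is not what your argument yields and is likely false. Relatedly, your cones are truncated at $B(Q,r)$, so a good point's upward cone must remain in $\Omega$ for a vertical distance up to $2r$; for a slab-like corkscrew domain of width $\sim r/M$ the good set is then empty. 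The cones must be truncated so that they exit through the interior corkscrew ball, which is what the trapezoid $\mathcal{T}$ in \S\ref{section21} is engineered to do.

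The second issue is that the bad-set estimate---the only place where $\gamma$ enters and the entire content of \S\ref{section3}---is left as ``run the David--Jerison argument,'' and the mechanism you describe for how $\gamma$ enters (distributing total mass $\gamma r^{n-1}$ among disjoint cone sectors) is not the one that works. The paper's proof first excises the set $\Lambda$ where the Hardy--Littlewood maximal function of $\pi_\sharp(\surf\res\Delta(Q,r))$ exceeds $N\sim\gamma/\varepsilon$ (Lemma \ref{Helper1}; this is where the hypothesis $\surf(\Delta(Q,r))\leq\gamma r^{n-1}$ is used, and it is what makes the surface-measure-versus-projection distinction survivable), then decomposes the remaining bad projections by the vertical scale $s_k$ at which the cone is pierced, runs a stopping-time search using only the \emph{lower} regularity of Lemma \ref{lowersurf} (Lemmas \ref{Helper2}--\ref{Helper4}), and finally controls the number of bad cubes at each scale by attaching disjoint pieces of surface $S(I_{k,j})$ whose projections across all scales have total mass $\lesssim\gamma r^{n-1}$ (Lemmas \ref{Helper5}--\ref{Helper7}). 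None of these devices appears in your sketch, and without them the claim that the bad set can be made small ``by taking $s$ large'' is an assertion, not a proof.
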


\begin{corollary}\label{corbplg} If $\Omega\subset\RR^n$ is a corkscrew domain and $\surf\res\partial\Omega$ is upper Ahlfors regular, then $\partial\Omega$ has big pieces of Lipschitz graphs.\end{corollary}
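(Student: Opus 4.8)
The plan is to deduce Corollary 2.5 almost immediately from Theorem 2.4, the only work being to check the two defining conditions of BPLG. First I would observe that the hypothesis ``$\surf\res\partial\Omega$ is upper Ahlfors regular'' combined with Lemma 2.3 (which gives lower Ahlfors regularity for \emph{any} corkscrew domain) shows that $\surf\res\partial\Omega$ is Ahlfors regular, i.e.\ condition (i) in the definition of BPLG. This already disposes of half the statement with no effort beyond citing the two results.

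For condition (ii), the idea is to feed each location $Q\in\partial\Omega$ and scale $0<r<R$ into Theorem 2.4. Here the upper regularity bound $\surf(\Delta(Q,r))\leq Cr^{n-1}$ enters in a crucial but mild way: it guarantees both that $\surf(\Delta(Q,r))<\infty$, so that the hypothesis of $(\star)$ is met, and that the ratio $\gamma=\surf(\Delta(Q,r))/r^{n-1}$ is bounded above by the single constant $C$, uniformly in $Q$ and $r$. Theorem 2.4 then produces a Lipschitz domain $\Omega_L\subset\Omega\cap B(Q,r)$ with $\surf(\partial\Omega_L\cap\partial\Omega)\geq\psi r^{n-1}$, where $\psi=\psi(n,M)$, and — by the ``moreover'' clause — with Lipschitz constant and character depending only on $n$, $M$, and $\gamma$; since $\gamma\le C$ and the Lipschitz constant is controlled by a monotone function of its arguments (or, failing monotonicity, one takes a supremum over $\gamma\in(0,C]$), this gives a uniform bound $h=h(n,M,C)$ on the Lipschitz constant. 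The common boundary $\partial\Omega_L\cap\partial\Omega$ lies in a single rigid motion of the graph $\Gamma$ of an $h$-Lipschitz function, and $\surf(B(Q,r)\cap\partial\Omega\cap\Gamma)\geq\surf(\partial\Omega_L\cap\partial\Omega)\geq\psi r^{n-1}$. Setting $\varphi=\psi$ and $r_0=R$ verifies (ii).

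The main obstacle — really the only point requiring any care — is the uniformity of the Lipschitz constant. Theorem 2.4 asserts the Lipschitz character depends on $n$, $M$, and $\gamma$, but does not explicitly say this dependence is monotone or bounded as $\gamma$ ranges over a bounded set; one should be slightly careful that passing from ``depends on $\gamma$'' to ``bounded by a constant depending only on $C$'' is legitimate. In practice this is immediate from the construction in Sections 2.1–3 (larger $\gamma$ only forces a steeper defining cone, hence a larger but still finite Lipschitz constant), so the honest thing is either to invoke that monotonicity or simply to note that one needs only \emph{some} uniform bound over the compact parameter range $\gamma\in(0,C]$. Everything else is bookkeeping: matching the conclusion of Theorem 2.4 to the literal wording of the BPLG definition and choosing the constants $\varphi$, $h$, $r_0$.
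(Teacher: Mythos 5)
Your proof is correct and is essentially the derivation the paper intends (the corollary is stated without proof as an immediate consequence of Lemma 2.3 and Theorem 2.4). The uniformity worry you flag dissolves even more simply than you suggest: in the statement and construction of Theorem 2.4 the quantity $\gamma$ is merely \emph{any} number with $\surf(\Delta(Q,r))\leq\gamma r^{n-1}$, so under upper Ahlfors regularity one applies the theorem with the single value $\gamma=C$ at every location and scale, yielding a uniform Lipschitz constant $h(n,M,C)$ with no monotonicity or supremum argument needed.
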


\begin{corollary}\label{corkrect} Let $\Omega\subset\RR^n$ be a corkscrew domain with constants $M>1$ and $R>0$. There exists a constant $h=h(n,M)>0$ such that \begin{equation}\label{corkrecteq}\{Q\in\partial\Omega: \surf(\Delta(Q,r))<\infty\text{ for some }r>0\}\end{equation} is contained, modulo a set of $\surf$-measure zero, in the countable union of sets $F_i(\RR^{n-1})$ where each function $F_i:\RR^{n-1}\rightarrow\RR^n$ has Lipschitz constant at most $h$.
\end{corollary}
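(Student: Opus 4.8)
The plan is to iterate Theorem~\ref{approxthm}, peeling off at each stage a fixed proportion of surface measure that can be placed inside graphs of Lipschitz functions of controlled slope, until $\surf$-almost all of the set in \eqref{corkrecteq} has been covered. The one genuine obstacle is that the Lipschitz constant in Theorem~\ref{approxthm} depends on the density ratio $\gamma=\surf(\Delta(Q,r))/r^{n-1}$, which is not \emph{a priori} bounded; I would dispose of that first.

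\textbf{Step 1 (reduction to bounded density).} Write $E$ for the set in \eqref{corkrecteq}. Every $Q\in E$ is the centre of a ball $B(Q,r_Q)$ with $\surf(\Delta(Q,r_Q))<\infty$, so by the Lindel\"of property $E$ is covered by countably many balls $B(Q_k,r_k/2)$ for which $\mu_k:=\surf\res(\partial\Omega\cap B(Q_k,r_k))$ is a finite Radon measure. The classical upper density bound for Hausdorff measure supplies a dimensional constant $c_n$ with $\limsup_{\rho\downarrow 0}\surf(\Delta(Q,\rho))/\rho^{n-1}\le c_n$ for $\mu_k$-almost every $Q$. Hence, modulo a $\surf$-null set, $E\subset\bigcup_k G_k$, where $G_k:=\{Q\in\partial\Omega\cap B(Q_k,r_k/2):\limsup_{\rho\downarrow 0}\surf(\Delta(Q,\rho))/\rho^{n-1}\le c_n\}$ is Borel with $\surf(G_k)<\infty$, and it suffices to cover each $G_k$, up to a $\surf$-null set, by countably many graphs of $h_0$-Lipschitz functions for one fixed $h_0=h_0(n,M)$. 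Indeed, at every scale $\rho\in(0,R)$ realizing the density bound at a point $Q\in G_k$ one has $\gamma=\surf(\Delta(Q,\rho))/\rho^{n-1}\le 2c_n$ and $\surf(\Delta(Q,\rho))<\infty$, so Theorem~\ref{approxthm} applies and the approximating graph it produces has Lipschitz constant depending only on $n$, $M$ and $\gamma\le 2c_n$, hence at most such an $h_0$ (by Lemma~\ref{lowersurf}, $\gamma$ also exceeds $\beta(n,M)$, so $\gamma$ in fact ranges over a compact interval determined by $n$ and $M$).

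\textbf{Step 2 (one step of the exhaustion).} Fix $k$ and abbreviate $\mu=\mu_k$; all balls below are understood to have radius $<r_k/2$, so they lie in $B(Q_k,r_k)$, where $\mu$ agrees with $\surf\res\partial\Omega$. I would show there is $c=c(n,M)\in(0,1)$ such that every $\mu$-measurable $S\subset G_k$ is covered, up to $\mu$-measure $(1-c)\mu(S)$, by countably many graphs of $h_0$-Lipschitz functions. By the Lebesgue density theorem for $\mu$ and the defining property of $G_k$, for $\mu$-almost every $Q\in S$ there are arbitrarily small admissible $\rho\in(0,R)$ with $\mu(\bar B(Q,\rho))\le 2c_n\rho^{n-1}$ and $\mu(\bar B(Q,\rho)\setminus S)\le\frac{\psi}{2}\rho^{n-1}$, where $\psi=\psi(n,M)$ is from Theorem~\ref{approxthm}. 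For such $Q$ and $\rho$, Theorem~\ref{approxthm} (with $a=A^+(Q,\rho/2)$) gives a Lipschitz domain $\Omega_L\subset\Omega\cap B(Q,\rho)$ whose boundary lies in a single graph $\Gamma$ of an $h_0$-Lipschitz function and with $\surf(\partial\Omega_L\cap\partial\Omega)\ge\psi\rho^{n-1}$; since $\partial\Omega_L\cap\partial\Omega\subset\Delta(Q,\rho)\subset\bar B(Q,\rho)$,
\[
\mu\big(S\cap\Gamma\cap\bar B(Q,\rho)\big)\ \ge\ \psi\rho^{n-1}-\mu\big(\bar B(Q,\rho)\setminus S\big)\ \ge\ \frac{\psi}{2}\,\rho^{n-1}\ \ge\ \frac{\psi}{4c_n}\,\mu\big(S\cap\bar B(Q,\rho)\big).
\]
These closed balls form a fine cover of a subset of full $\mu$-measure in $S$, so the Vitali covering theorem for Radon measures (Theorem~\ref{vitali}) extracts a countable disjoint subfamily $\{\bar B_j\}$ with $\mu(S\setminus\bigcup_j\bar B_j)=0$; if $\Gamma_j$ is the graph attached to $\bar B_j$, disjointness gives
\[
\mu\Big(S\setminus\bigcup_j\Gamma_j\Big)\ \le\ \sum_j\big(\mu(S\cap\bar B_j)-\mu(S\cap\Gamma_j\cap\bar B_j)\big)\ \le\ \Big(1-\frac{\psi}{4c_n}\Big)\mu(S),
\]
which is the claim with $c=\psi/(4c_n)\in(0,1)$ (note $\psi\rho^{n-1}\le\surf(\Delta(Q,\rho))\le 2c_n\rho^{n-1}$ forces $\psi\le 2c_n$).

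\textbf{Step 3 (iteration and conclusion).} Applying Step 2 with $S=G_k$, then with $S$ the uncovered remainder, and repeating, after $m$ rounds the uncovered part of $G_k$ has $\mu$-measure at most $(1-c)^m\mu(G_k)\to 0$, so the $h_0$-Lipschitz graphs collected over all rounds cover $G_k$ up to a $\surf$-null set. Taking the union over $k$ covers $E$, modulo a $\surf$-null set, by countably many graphs $\Gamma_i$ of $h_0$-Lipschitz functions; writing each $\Gamma_i$, after a rigid motion, as $F_i(\RR^{n-1})$ with $F_i(y)=(y,\phi_i(y))$ and $\phi_i$ $h_0$-Lipschitz, each $F_i$ is Lipschitz with constant at most $h:=(1+h_0^2)^{1/2}=h(n,M)$, which is Corollary~\ref{corkrect}. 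The main obstacle is Step 1: discarding the $\surf$-null set of points of infinite upper density is precisely what forces the ratios $\gamma$ to be uniformly bounded and hence the Lipschitz constants uniform; after that, Steps 2--3 are the standard passage from big pieces of Lipschitz graphs to an $\surf$-a.e.\ covering, the only delicate point being to centre the covering balls at Lebesgue density points of $S$ so that they are not wasted on $\partial\Omega\setminus S$.
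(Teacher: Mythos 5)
Your proof is correct and is essentially the argument the paper intends (the corollary is stated without proof as a consequence of Theorem \ref{approxthm}): reduce to points of bounded upper density via the standard density theorem for $\surf$ restricted to a set of finite measure, so that $\gamma$ and hence the Lipschitz constant are uniform in $n$ and $M$, then run the usual Vitali-plus-iteration exhaustion using the big pieces from Theorem \ref{approxthm}. You correctly identified and disposed of the one genuine issue, namely that the Lipschitz constant in Theorem \ref{approxthm} depends on $\gamma$, which is only bounded after discarding the $\surf$-null set of infinite upper density.
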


\subsection{Constructing a Lipschitz Approximation}
\label{section21}

Let $\Omega\subset\RR^n$ be a corkscrew domain with constants $M>1$ and $R>0$, and fix $Q\in \partial\Omega$ and $0<r<R$ such that \begin{equation}\label{gammadefn}\surf(\Delta(Q,r))\leq\gamma r^{n-1}<\infty.\end{equation} We do not normalize the radius $r$, because we want to emphasize that the construction takes place at any fixed scale such that $\surf(\Delta(Q,r))/r^{n-1}<\infty$. Our immediate goal is to find a constant $\psi=\psi(n,M)>0$ such that $(\star)$ in Theorem \ref{approxthm} holds.

Let a non-tangential point $a=A^+(Q,r/2)$ of $\Omega$ be given. We select a piece of the boundary to approximate as follows. Pick any non-tangential point $b=A^-(Q,r/2)$ of $\Omega^-$. Then the line segment from $a$ to $b$ intersects $\Delta(Q,r/2)$ in some point $Q'$. After a harmless translation and rotation, we may assume that $Q'=0$, $a=(0,a_n)$ and $b=(0,-b_n)$ where $r/2M\leq a_n,b_n\leq r$.  Note that $B(a,r/2M)\subset B(Q,r)\cap\Omega$ and $B(b,r/2M)\subset B(Q,r)\cap \Omega^-$. Let $I_0=[-s/2,s/2]^{n-1}\subset\RR^{n-1}$ be the $(n-1)$-dimensional cube with side length \begin{equation}s=\frac{r}{2M\sqrt{n-1}}\end{equation} centered at the origin. Then $I_1=I_0\times\{a_n\}\subset B(a,r/4M)$. Write $\pi:\RR^{n}\rightarrow\RR^{n-1}$ and $f:\RR^{n}\rightarrow\RR$ for the orthogonal projections onto the first $(n-1)$ coordinates and the last coordinate of $\RR^{n}$, respectively. Fix a cone $\mathcal{C}$ opening upwards, \begin{equation}\label{conedefn}\mathcal{C}=\{z\in\RR^n:f(z)\geq h|\pi(z)|\},\end{equation} with parameter $h\gg 1$ to be chosen later. Let $\mathcal{T}$ denote the trapezoidal region \begin{equation}\mathcal{T}=\{y\in\RR^n:-b_n\leq f(y)\leq a_n\text{ and } (y+\mathcal{C})\cap f^{-1}(a_n)\subset I_1\}\end{equation} and write $T=\partial\Omega\cap\mathcal{T}$ for the portion of boundary in $\mathcal{T}$. We shall approximate $T$.

The reader can check that $I_0\times[a_n-s/4,a_n]\subset B(a,r/2M)\subset\Omega$ and similarly that $I_0\times[-b_n,-b_n+s/4]\subset B(b,r/2M)\subset\Omega^-$. Thus we know \begin{equation}\label{I1Tgap} -b_n+s/4\leq f(y)\leq a_n-s/4\quad\text{ for all }y\in T.\end{equation} If the slope $h$ of the cone $\mathcal{C}$ is sufficiently large, then the surface $T$ has large measure.

\begin{lemma}\label{piTLemma} If $h\geq h_0=16M\sqrt{n-1}$, then $\surf(\pi(T))\geq r^{n-1}/\left(4M\sqrt{n-1}\right)^{n-1}$.\end{lemma}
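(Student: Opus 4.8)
The plan is to show that the projection $\pi$ restricted to $T$ is injective, and moreover that $\pi(T)$ contains the full cube $I_0$ (or at least a definite fraction of it), whose $(n-1)$-measure is exactly $s^{n-1} = r^{n-1}/(2M\sqrt{n-1})^{n-1}$; the stated bound $r^{n-1}/(4M\sqrt{n-1})^{n-1}$ then follows with room to spare. The steepness hypothesis $h \geq h_0$ is exactly what makes both of these geometric facts work, so the argument splits into a ``separation'' step and a ``covering'' step.

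First I would establish injectivity of $\pi|_T$. Suppose $y, y' \in T$ with $\pi(y) = \pi(y')$ and $f(y) < f(y')$. Since both points lie on $\partial\Omega$ and the vertical segment between them has length $|f(y)-f(y')|$, I want to derive a contradiction by showing that the defining condition $(y+\mathcal{C})\cap f^{-1}(a_n) \subset I_1$ together with the cone's steepness forces $y'$ out of $\mathcal{T}$, or conversely places a forbidden ball around one of the points. Concretely, the cone $y + \mathcal{C}$ has half-width $(a_n - f(y))/h$ at height $a_n$; requiring this disk to lie inside $I_1 = I_0 \times\{a_n\}$ (a cube of half-side $s/2$) forces $\pi(y)$ to be within $s/2 - (a_n-f(y))/h$ of the center in sup-norm. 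The point is that two boundary points on the same vertical line cannot both satisfy the trapezoid condition once $h$ is large, because — using $(\ref{I1Tgap})$, so $a_n - f(y) \geq s/4$ — the cone from the lower point is already too wide relative to how steeply it opens; this is where $h \geq 16M\sqrt{n-1}$ enters, since $s = r/(2M\sqrt{n-1})$ and $a_n \leq r$, so $(a_n - f(y))/h$ can be controlled against $s$. Actually the cleaner route: the exterior corkscrew ball $B(b, r/2M)$ contains $I_0\times[-b_n,-b_n+s/4]$ and sits in $\Omega^-$, while $B(a,r/2M)$ contains $I_0\times[a_n - s/4, a_n]$ in $\Omega$; so over each point of $I_0$ the boundary $\partial\Omega$ separates these two slabs, and the cone condition, when $h$ is large, pins down which ``sheet'' we are on — I expect injectivity to drop out of a careful bookkeeping of these inclusions rather than needing a delicate cone estimate.

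Second, for the covering step, I would show $\pi(T) \supseteq I_0$ (possibly a slightly shrunken copy $I_0' = [-s/4,s/4]^{n-1}$, which still gives measure $(s/2)^{n-1} = r^{n-1}/(4M\sqrt{n-1})^{n-1}$, matching the claimed bound exactly). Fix $x \in I_0'$. The vertical line $\pi^{-1}(x)$ meets the slab $I_0\times[a_n-s/4,a_n] \subset \Omega$ and the slab $I_0\times[-b_n,-b_n+s/4] \subset \Omega^-$, so by connectedness it crosses $\partial\Omega$ at some point $y$ with $-b_n + s/4 \leq f(y) \leq a_n - s/4$. It remains to check $y \in \mathcal{T}$, i.e. $(y+\mathcal{C})\cap f^{-1}(a_n) \subset I_1$: the translate of the cone at height $a_n$ is the sup-norm ball around $x$ of radius $(a_n - f(y))/h \leq (a_n + b_n)/h \leq 2r/h \leq 2r/(16M\sqrt{n-1}) = s/4$, so adding this to $|x|_\infty \leq s/4$ keeps everything inside $I_0$ of half-side $s/2$. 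Hence $y \in T$ and $x \in \pi(T)$.

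The main obstacle I anticipate is the injectivity step: it is geometrically plausible but requires being careful about the possibility that $\partial\Omega$ wiggles back and forth across a vertical line many times, and ruling that out within $\mathcal{T}$ genuinely uses the interplay of the cone aperture with the corkscrew slabs. If proving full injectivity is awkward, a cheaper alternative suffices for the stated inequality: one only needs the lower bound $\surf(\pi(T)) = \surf(T \text{ projected})$, and since $\pi$ is $1$-Lipschitz we always have $\surf(\pi(T)) \leq \surf(T)$, but for the reverse one can instead argue that $\pi(T) \supseteq I_0'$ directly as above — which needs no injectivity at all, only the covering step. So I would lead with the covering argument, which already delivers $\surf(\pi(T)) \geq \surf(I_0') = (s/2)^{n-1} = r^{n-1}/(4M\sqrt{n-1})^{n-1}$, and treat injectivity as a remark needed later rather than here.
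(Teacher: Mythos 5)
Your covering step is exactly the paper's argument: the paper shows the cube $I_2=[-s/4,s/4]^{n-1}\times\{-b_n\}$ lies in $\mathcal{T}$ via the same computation $\tfrac{s}{4}+\tfrac{a_n+b_n}{h}\leq\tfrac{s}{2}$ (which is where $h_0=16M\sqrt{n-1}$ comes from), and then intersects vertical segments from $I_2\subset\Omega^-$ to $I_1\subset\Omega$ with $\partial\Omega$, yielding $\surf(\pi(T))\geq(s/2)^{n-1}$. You correctly recognize at the end that the injectivity of $\pi|_T$ is neither needed nor the right thing to aim for here (it is generally false on $T$; only on $T_\Gamma$ later), so leading with the covering argument alone gives a complete and essentially identical proof.
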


\begin{proof} If $h$ is sufficiently large, we claim that $\mathcal{T}$ contains the $(n-1)$-dimensional cube $I_2=[-s/4,s/4]^{n-1}\times\{-b_n\}$. Indeed first note $I_2\subset\mathcal{T}$ if and only if the corner $y=(s/4,\dots,s/4,-b_n)$ of the cube satisfies $(y+\mathcal{C})\cap f^{-1}(a_n)\subset I_1$. Hence if $z\in\RR^n$,  $f(z)=h|\pi(z)|$ and $-b_n+f(z)=f(y+z)=a_n$ (i.e.\ $y+z\in (y+\partial \mathcal{C})\cap f^{-1}(a_n)$) then $s/4+|\pi(z)|\leq s/2$ (i.e.\ $y+z\in I_1$) implies $I_2\subset\mathcal{T}$. That is, \begin{equation}\label{QtGoal} \frac{s}4+\frac{a_n+b_n}{h}\leq\frac{s}{2}\quad\Rightarrow\quad I_2\subset\mathcal{T}.\end{equation} Since $a_n+b_n\leq 2r=4Ms\sqrt{n-1}$, we find that $I_2\subset\mathcal{T}$ provided \begin{equation} \frac{s}{4}+\frac{4Ms\sqrt{n-1}}{h}\leq\frac{s}{2}.
\end{equation} Thus $I_2\subset\mathcal{T}$ when $h\geq h_0=16M\sqrt{n-1}$. Because every vertical line segment from $I_2\subset\Omega^-\cap\mathcal{T}$ to $I_1\subset\Omega^+\cap\mathcal{T}$ intersects $T$ and $\pi(I_2)\cap \pi(I_1)=\pi(I_2)$, we conclude \begin{equation} \surf(\pi(T))\geq \surf(\pi(I_2))= \left(\frac{s}{2}\right)^{n-1}= \left(\frac{r}{4M\sqrt{n-1}}\right)^{n-1}\end{equation} whenever $h\geq h_0$.
\end{proof}

\begin{figure}[t]
\includegraphics[width=.36\textwidth]{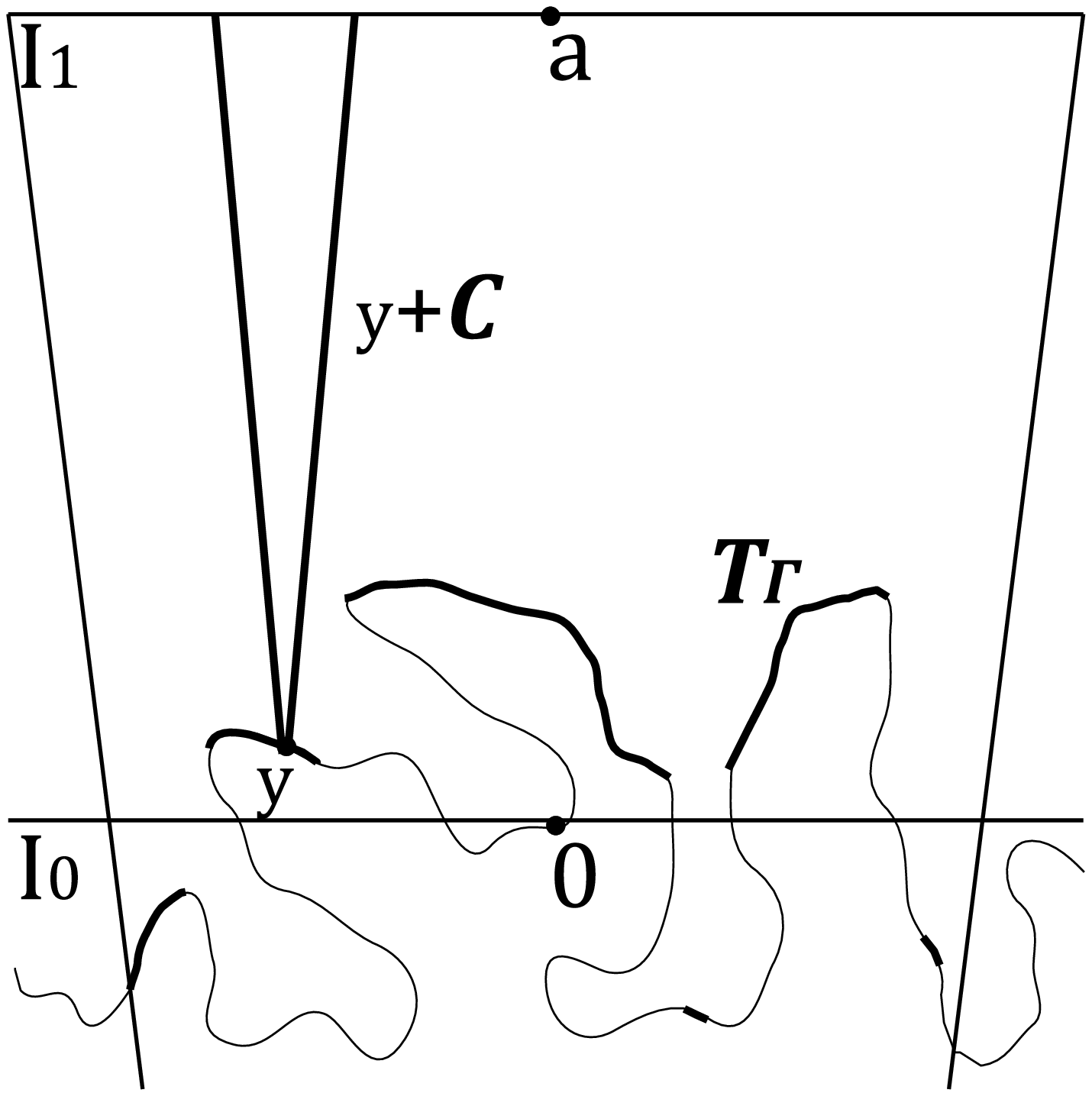}
\quad\quad
\includegraphics[width=.4\textwidth]{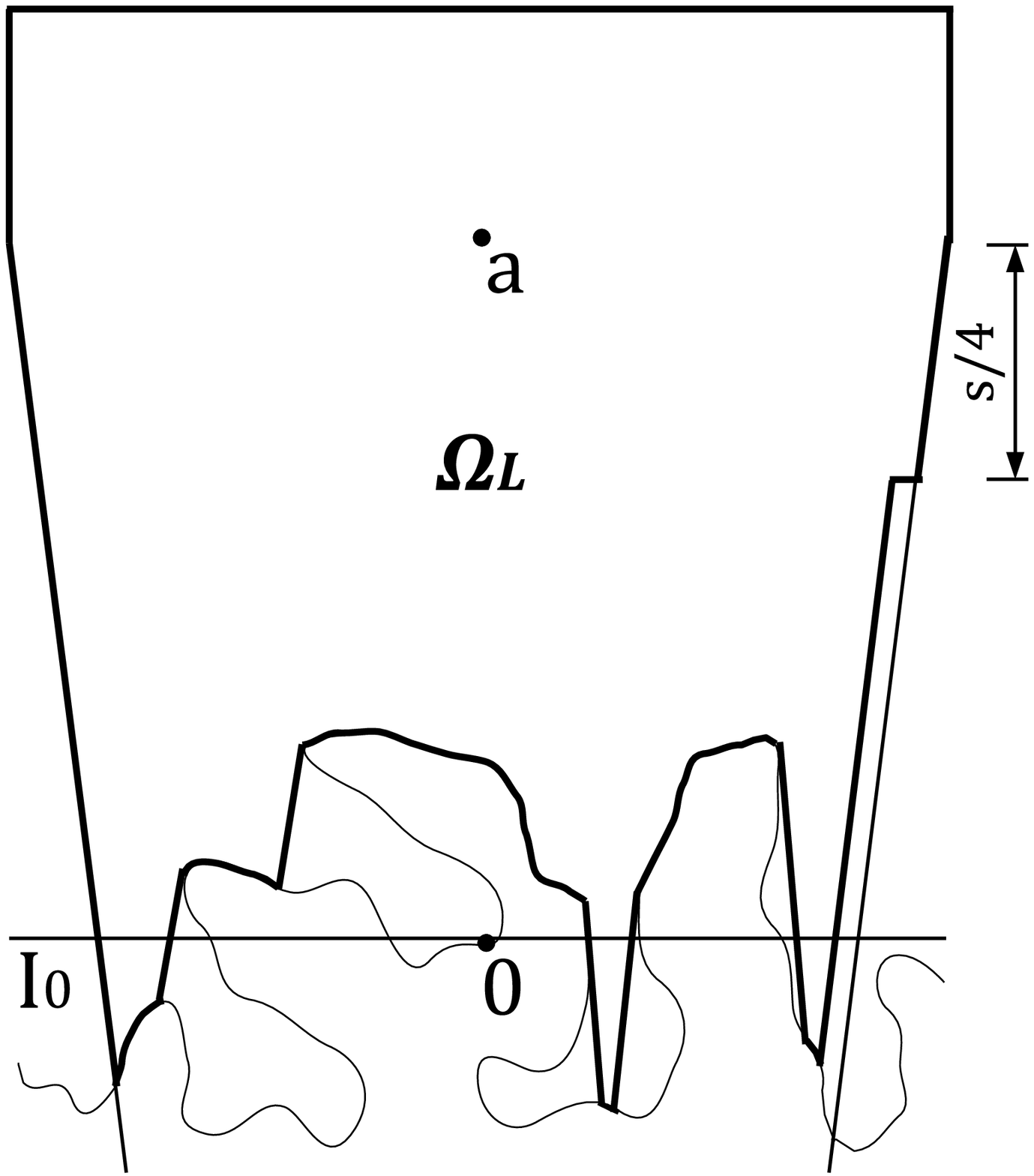}
\caption{What do $T_\Gamma$ and $\Omega_L$ look like?}
\end{figure}

We now use the cone $\mathcal{C}$ to identify a subset of $T$ which intersects a Lipschitz graph contained inside $\overline{\Omega\cap B(Q,r)}$ in a big piece. By a standard argument the set $T_\Gamma$ (Fig. 2), \begin{equation} T_\Gamma=\{y\in T:(y+\mathcal{C}) \cap T=\{y\}\}, \end{equation} sits inside the graph $\Gamma$ of a function $F:I_0\rightarrow\RR$ with Lipschitz constant at most $h$. Note by (\ref{I1Tgap}) replacing $F$ by $\max(\min(F,a_n-s/4),-b_n+s/4)$ does not effect $\Gamma\cap T$. Hence we may assume without loss of generality that $-b_n+s/4\leq F(x)\leq a_n-s/4$ for all $x\in I_0$. Define the domain \begin{equation}\Omega_L=\{(x,u)\in I_0\times \RR:u>F(x)\}\cap \interior\big(\mathcal{T}\cup (I_0\times [a_n,a_n+s/4])\big).\end{equation} That is, $\Omega_L$ is obtained by taking the area above $\Gamma$ inside $\mathcal{T}$ and then extending upwards so that $B(a,s/4)$ lies inside the domain. Since the vertical extension satisfies $I_0\times[a_n,a_n+s/4]\subset B(a,r/2M)$, $\Omega_L\subset \Omega\cap B(Q,r)$ and $\Omega_L$ is a Lipschitz domain with intersection $\partial\Omega_L\cap\partial\Omega=\Gamma\cap T=T_\Gamma$. Notice that $\Omega_L$ can be covered by $c(n)$ Lipschitz graphs with  constant at most $h$.

To select the slope $h$ of $\mathcal{C}$ large enough so that $\surf(T_\Gamma)\geq \psi r^{n-1}$ for some constant $\psi=\psi(n,M)>0$, we need the following claim, a slight modification of the geometric proposition in \cite{DJ}.

\begin{proposition}\label{PropG} For all $\varepsilon>0$, there exists $h\geq h_0$ depending only on  $n$, $M$, $\gamma$ and $\varepsilon$ such that $\surf(\pi(T)\setminus \pi(T_\Gamma))\leq \varepsilon r^{n-1}$.\end{proposition}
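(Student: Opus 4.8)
The plan is to follow the argument in \cite{DJ}, but to track the dependence on $\gamma$ in place of the upper Ahlfors bound. The key point is that $\pi(T)\setminus\pi(T_\Gamma)$ consists of those $x\in\pi(T)$ for which there is a boundary point $y\in T$ with $\pi(y)=x$ such that the open cone $y+\interior\mathcal{C}$ still meets $T$; equivalently, there are two points of $T$ lying over $x$ (or nearby) that are ``cone-linked.'' The strategy is to cover this bad set by boundary balls on which one sees, via the two points and the aperture of $\mathcal{C}$, a definite amount of surface measure that is \emph{not} counted by $\pi$, and then to play the total mass $\gamma r^{n-1}$ of $\surf(\Delta(Q,r))$ against the number of such balls.

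First I would make the geometry precise: if $x\in\pi(T)\setminus\pi(T_\Gamma)$, pick the lowest point $y\in T$ over $x$; since $x\notin\pi(T_\Gamma)$ there is $y'\in T\cap(y+\interior\mathcal{C})$ with $y'\ne y$, so $f(y')-f(y)\ge h|\pi(y')-\pi(y)|>0$ and in particular $f(y')-f(y)\le a_n-s/4-(-b_n+s/4)\le 2r$ by \eqref{I1Tgap}, while the horizontal displacement $|\pi(y')-\pi(y)|$ is at most $2r/h$. Now around $y$ (or around the midpoint of the segment $[y,y']$) there is a ball $B$ of radius comparable to $f(y')-f(y)$, contained in $B(Q,r)\cap\mathcal{T}$, such that $\Delta$ inside $B$ separates the disk through $y$ from the disk through $y'$ in the two corkscrew components; this is the same separation mechanism used in Lemma \ref{lowersurf} and Lemma \ref{piTLemma}. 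The upshot is a cone-aperture--dependent gain: on such a ball $B$ of radius $\rho$, $\surf(\partial\Omega\cap B)\ge c(n)\rho^{n-1}$ while the projection $\pi$ of $B\cap T$ has $\surf$ at most $(C\rho/h)^{n-1}$ smaller by a factor $h^{n-1}$, so the portion of $\surf\res T$ sitting over $B$ that is ``hidden'' from $\pi$ is at least a definite multiple of $\surf(\partial\Omega\cap B)$.

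The main step is then a Vitali/covering argument. Using the Vitali covering lemma, extract from the family of such balls $\{B_i\}$ a disjoint subfamily $\{B_{i_k}\}$ with $5B_{i_k}$ still covering the bad set. Then
\begin{equation}
\surf\big(\pi(T)\setminus\pi(T_\Gamma)\big)\le\sum_k \surf\big(\pi(5B_{i_k}\cap T)\big)\le \frac{C(n)}{h^{n-1}}\sum_k \rho_{i_k}^{n-1}\le \frac{C'(n)}{h^{n-1}}\sum_k \surf(\partial\Omega\cap B_{i_k})\le \frac{C'(n)}{h^{n-1}}\,\surf(\Delta(Q,r)),
\end{equation}
where the last inequality uses disjointness of the $B_{i_k}$ and $B_{i_k}\subset B(Q,r)$. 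Since $\surf(\Delta(Q,r))\le\gamma r^{n-1}$, the right-hand side is $\le C'(n)\gamma h^{-(n-1)}r^{n-1}$, which is $\le\varepsilon r^{n-1}$ as soon as $h\ge h_0$ and $h\ge\big(C'(n)\gamma/\varepsilon\big)^{1/(n-1)}$. Taking $h$ to be the maximum of these two quantities gives the claimed dependence on $n,M,\gamma,\varepsilon$.

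The hard part will be setting up the ball $B$ in the third step so that the two corkscrew-component disks through $y$ and $y'$ genuinely force $\surf(\partial\Omega\cap B)\gtrsim\rho^{n-1}$ with $\rho$ comparable to the full vertical gap $f(y')-f(y)$, uniformly as $h\to\infty$; one must check that the exterior corkscrew point supplied at $y'$ (or at $y$) sits at a controlled location relative to $B$ and that $B$ stays inside $\mathcal{T}\cap B(Q,r)$, which is where the trapezoid geometry and the bounds $r/2M\le a_n,b_n\le r$ are used. Everything else is bookkeeping of the covering argument.
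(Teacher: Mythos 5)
Your covering scheme breaks down at the inequality $\surf\bigl(\pi(5B_{i_k}\cap T)\bigr)\leq C(n)h^{-(n-1)}\rho_{i_k}^{n-1}$, and this is not a repairable detail. Only the two witnessing points $y,y'$ are horizontally within $\rho/h$ of each other; the rest of $T\cap 5B_{i_k}$ is under no such constraint, and in fact by the same separation argument as in Lemmas \ref{lowersurf} and \ref{piTLemma} the projection $\pi(T\cap 5B_{i_k})$ typically has measure comparable to the full disk $\pi(5B_{i_k})$, i.e.\ $\gtrsim \rho_{i_k}^{n-1}$ with no factor of $h^{-(n-1)}$. Dropping that spurious gain, your chain of inequalities only yields $\surf(\pi(T)\setminus\pi(T_\Gamma))\leq C(n)\gamma r^{n-1}$, which is vacuous. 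Nor does it help to replace $\pi(5B_{i_k}\cap T)$ by the bad set itself inside $\pi(5B_{i_k})$: if a second sheet of $\partial\Omega$ hovers at height $\rho$ over a flat sheet filling a $\rho$-disk, every point of that disk is bad at scale $\rho$ while the surface measure in the ball exceeds the minimal amount by only a bounded factor (two sheets instead of one). So a single application of ``extra hidden surface over each bad ball'' gains a constant, never a factor of $h^{n-1}$ or $1/\varepsilon$.

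The smallness in the proposition has to come from iteration across scales, and this is exactly what the machinery of \cite{DJ}, reproduced in \S\ref{section3}, supplies and what your one-shot Vitali argument omits. Concretely: (i) the maximal function of $\pi_\sharp(\surf\res\Delta(Q,r))$ is used to discard a set $\Lambda$ of measure $\leq(\varepsilon/2)r^{n-1}$ where the surface piles up vertically more than $N\sim\gamma/\varepsilon$ times over a cube (Lemma \ref{Helper1}) --- this is where $\gamma$ enters; (ii) the remaining bad set is stratified into pieces $F_k$ according to the size $s_k$ of the vertical gap $f(z)-f(y)$, since a single bad point can be witnessed at many scales; (iii) an iterated stopping-time ``search lemma'' (Lemmas \ref{Helper3}--\ref{Helper4}) shows $F_k\setminus\Lambda$ has density $<\delta$ in every cube of side $s_k$, and it is here --- not in the covering --- that $h$ must be taken large, so that the cone is steep enough for the search to run through enough generations; and (iv) the surface pieces $S_k$ attached to different scales are made pairwise disjoint (Lemma \ref{Helper7}) precisely so that their total mass can be summed against $\gamma r^{n-1}$ without double counting. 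Your proposal engages with none of (ii)--(iv), which are the heart of the proof.
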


The proof of Proposition \ref{PropG} is a long but fairly straightforward application of the corkscrew condition on the exterior $\Omega^-$ of $\Omega$, the lower Ahlfors regularity of $\surf\res\partial\Omega$, and the upper bound $\surf(\Delta(Q,r))\leq \gamma r^{n-1}$; details are postponed until \S\ref{section3}. First let us finish studying the Lipschitz approximation $\Omega_L$ to $\Omega\cap B(Q,r)$.

\begin{proof}[Proof of Theorem 2.4] By Lemma \ref{piTLemma} and Proposition \ref{PropG}, \begin{equation}\begin{split}\surf(T_\Gamma) \geq \surf(\pi(T_\Gamma)) &= \surf(\pi(T)) - \surf(\pi(T)\setminus\pi(T_\Gamma))\\
&\geq \left(\frac{1}{\left(4M\sqrt{n-1}\right)^{n-1}}-\varepsilon\right) r^{n-1}.\end{split}\end{equation} Choosing $\varepsilon>0$ sufficiently small (equivalently choosing $h\geq h_0$ sufficiently large), we conclude that $\surf(\partial\Omega_L\cap\partial\Omega)=\surf(T_\Gamma)\geq \psi r^{n-1}$ with \begin{equation} \psi=\frac{1}{\left(8M\sqrt{n-1}\right)^{n-1}}.\end{equation} The constant $\psi$ only depends on $n$ and $M$; the Lipschitz constant of the graph $\Gamma$ and the Lipschitz character of the domain $\Omega_L$ are determined by $h$ and thus by $n$, $M$, $\gamma$.\end{proof}

\subsection{Harmonic Measure and the $A$-infinity Condition}
\label{section22}

Next we compare harmonic measure $\omega_L$ and surface measure $\sigma_L$ on $\Omega_L$ (Lemma \ref{etalemma}) using constants depending only on $n$, $M$ and $\gamma$.  A theorem of Dahlberg \cite{D} asserts a strong relationship between harmonic measure and surface measure exists on any bounded Lipschitz domain.

\begin{theorem}[\cite{D} Theorem 3] \label{Dthm}Let $D\subset\RR^n$ be a bounded Lipschitz domain equipped with harmonic measure $\omega_D$ and surface measure $\sigma_D$. Then $\omega_D\in A_\infty(\sigma_D)$.\end{theorem}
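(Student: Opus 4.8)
The statement is Dahlberg's theorem, cited above, so I only sketch a route, following the approach of Jerison and Kenig \cite{JKbull}. The plan is to obtain the $A_\infty$ bound from a scale-invariant reverse Hölder inequality for the Poisson kernel, which in turn comes from a Rellich-type integration-by-parts identity. As a first step I would reduce to a model domain: a bounded Lipschitz domain $D$ is NTA, so $\omega_D$ is doubling and the boundary comparison principle holds \cite{JK}, and hence it suffices to prove, for every surface ball $\Delta=\Delta(Q,\rho)$ with $\rho$ below a fixed threshold and $A_\Delta\in D$ a corkscrew point for $\Delta$, a reverse Hölder inequality on $\Delta$ for the density $k_\Delta=d\omega_D^{A_\Delta}/d\sigma_D$. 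Near any boundary point $D$ coincides with the region above the graph of a Lipschitz function, and this class is invariant under dilations with the Lipschitz constant fixed; so, after a finite partition of unity and a rescaling, it is enough to establish one estimate on a fixed bounded star-shaped Lipschitz domain $D_0$ (star-shaped with respect to a ball) with the pole of $\omega$ at a fixed interior center $X_0$, and then transport it back to each small surface ball via the comparison principle.

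On $D_0$ I would invoke the Rellich identity with the radial vector field $X\mapsto X$: for $u$ harmonic and smooth up to $\partial D_0$, applying the divergence theorem to $X|\nabla u|^2-2(X\cdot\nabla u)\nabla u$ gives
\[
\int_{\partial D_0}\Bigl(\langle X,\nu\rangle\,|\nabla u|^2-2\,(X\cdot\nabla u)\,\partial_\nu u\Bigr)\,d\sigma=(n-2)\int_{D_0}|\nabla u|^2\,dX\ \geq\ 0 .
\]
Star-shapedness forces $\langle X,\nu\rangle\geq c>0$ $\sigma$-a.e., so the first boundary term is comparable to $\int_{\partial D_0}|\nabla u|^2\,d\sigma$; applied to a general harmonic $u$ this yields $L^2$-solvability of the Dirichlet problem on $D_0$, and applied to (a regularization of) the Green function $G=G(X_0,\cdot)$ — which vanishes on $\partial D_0$, so that there $\nabla G=(\partial_\nu G)\nu=-k\nu$ with $k=d\omega^{X_0}/d\sigma$ — it yields the global bound $\int_{\partial D_0}k^2\,d\sigma\leq C$, with $C$ depending only on $n$ and the Lipschitz constant, the singularity of $G$ at $X_0$ being handled by excising a small ball whose boundary terms combine with the interior integral to a finite quantity controlled by the known asymptotics of $G$. (That $\omega^{X_0}\ll\sigma$, so that $k$ exists, is obtained either by running the same scheme on smooth domains exhausting $D_0$ and passing to the limit, or from Dahlberg's non-tangential maximal function estimate.) Rescaling a surface ball of radius $\rho$ to unit size converts this into the reverse Hölder inequality
\[
\Bigl(\dashint_{\Delta}k_\Delta^{\,2}\,d\sigma_D\Bigr)^{1/2}\ \leq\ C\,\dashint_{\Delta}k_\Delta\,d\sigma_D\ =\ C\,\frac{\omega_D^{A_\Delta}(\Delta)}{\sigma_D(\Delta)}
\]
for all small surface balls $\Delta\subset\partial D$.

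To conclude, since $\sigma_D$ is Ahlfors regular, hence doubling, Hölder's inequality applied to the displayed estimate gives, for every Borel set $E\subset\Delta$,
\[
\frac{\omega_D^{A_\Delta}(E)}{\omega_D^{A_\Delta}(\Delta)}\ \leq\ C\Bigl(\frac{\sigma_D(E)}{\sigma_D(\Delta)}\Bigr)^{1/2};
\]
replacing $\omega_D^{A_\Delta}$ by $\omega_D$ restricted to $\Delta$ via the comparison principle, and using doubling of $\omega_D$ to pass from small surface balls to all surface balls, this is exactly the quantitative implication $\sigma_D(E)\leq\delta\,\sigma_D(\Delta)\Rightarrow\omega_D(E)\leq\varepsilon\,\omega_D(\Delta)$ defining $\omega_D\in A_\infty(\sigma_D)$. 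The step I expect to be the main obstacle is the Rellich stage: justifying the identity for the Green function (whose boundary regularity is precisely what is at issue), controlling the contribution of the sphere around the pole, and — the genuinely delicate input of Dahlberg's theorem — establishing $\omega_D\ll\sigma_D$ together with the square-integrability of $k$, rather than merely a formal bound. The remaining ingredients (partition of unity, dilation invariance of Lipschitz graph domains, the comparison principle and doubling on NTA domains, and the self-improvement of a reverse Hölder inequality to $A_\infty$) are standard.
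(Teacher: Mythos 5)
This statement is Dahlberg's theorem, which the paper cites from \cite{D} without proof, so there is no internal argument to compare against; your sketch correctly reproduces the Jerison--Kenig route via the Rellich identity \cite{JKbull}, which is precisely the source the paper itself adapts for its quantitative Proposition~2.10. The outline is sound, and you have rightly identified the genuinely delicate steps (justifying the identity for the Green function, the excised-ball bookkeeping, and establishing $\omega_D\ll\sigma_D$ with $L^2$ density via smooth exhaustion) as the ones that would need to be carried out in full.
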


To use Theorem \ref{Dthm} effectively, we must understand the dependence of constants in the $A_\infty$ condition on the features of a Lipschitz domain. There are two proofs of Theorem \ref{Dthm} (Dahlberg \cite{D}, Jerison and Kenig \cite{JKbull}) and each proof first establishes the theorem on a special class of star-shaped Lipschitz domains. (A domain $D$ is called \emph{star-shaped} with \emph{center} $c$ if every open line segment from $c$ to $\partial D$ lies inside $D$.) Thus the $A_\infty$ condition in Theorem \ref{Dthm} may depend on how star-shaped Lipschitz domains cover the original domain. To clarify this dependence, we need to introduce some notation.
Let $D$ be a star-shaped Lipschitz domain with center $c$, and write $\vec n_y$ for the outer unit normal to $\partial D$ defined at $\surf$-a.e.\ $y\in\partial D$. Define the \emph{angle function} $\vartheta_{D,c}(y)$ (see Fig. 3) by \begin{equation}\cos \vartheta = \frac{\langle \vec n_y,y-c\rangle}{\|y-c\|},\quad 0\leq \vartheta\leq \pi.\end{equation} Note $\vartheta_{D,c}(y)$ is defined at $\surf$-a.e.\ $y\in\partial D$. Since $D$ is star-shaped with center $c$, the angle function $\vartheta_{D,c}(y)\in[0,\pi/2]$ almost surely.

\begin{figure}[t]
\includegraphics[width=.35\textwidth]{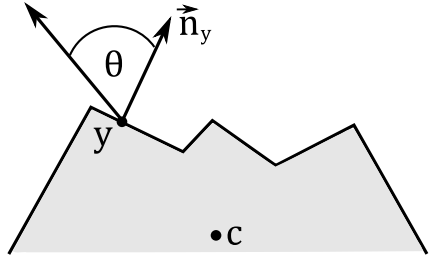}
\caption{The angle function $\vartheta_{D,c}(y)$}
\end{figure}

The following proposition is adapted from the proof of Theorem \ref{Dthm} in \cite{JKbull}.

\begin{proposition}\label{angles} Let $D\subset\RR^n$ be a bounded star-shaped Lipschitz domain with center $c\in D$. Let $h$ be the Lipschitz constant of $D$ and assume there exists radii $\rho_1,\rho_2>0$ such that $B(c,\rho_1)\subset D\subset B(c,\rho_2)$. For all $\vartheta_0<\pi/2$, there exists a constant $C=C(n,h,\rho_2/\rho_1,\vartheta_0)$ with the following property. If $\vartheta_{D,c} \leq \vartheta_0$ a.e. on $\Delta_D(y_0,r_0)$ for some $y_0\in\partial D$ and $r_0>0$, then the Radon-Nikodym derivative $k=d\omega^c_D/d\sigma_D$ of harmonic measure with pole at $c$ with respect to surface measure satisfies the reverse H\"older inequality \begin{equation} \label{2holder} \left(\dashint_{\Delta(y,r)}k^2d\sigma_D\right)^{1/2} \leq C \dashint_{\Delta(y,r)} kd\sigma_D\quad\text{for every }\Delta_D(y,r)\subset\Delta_D(y_0,r_0).\end{equation}
\end{proposition}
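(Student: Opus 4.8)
The plan is to trace through the Jerison--Kenig proof of Dahlberg's theorem for star-shaped Lipschitz domains, as presented in \cite{JKbull}, and to audit it for the explicit dependence of the reverse H\"older constant on the data $(n,h,\rho_2/\rho_1,\vartheta_0)$. First I would recall the structure of that argument: for a star-shaped Lipschitz domain $D$ with center $c$, one uses the radial vector field $V(y)=y-c$ as a ``good'' vector field, in the sense that its normal component $\langle V,\vec n_y\rangle = \|y-c\|\cos\vartheta_{D,c}(y)$ is bounded below (since $\vartheta_{D,c}\le\pi/2$), and applies the Rellich--N\v{e}cas identity to the Green function $G=G(\cdot,c)$. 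Integrating by parts with this vector field over $D$ (or over the region between $\partial D$ and a small sphere around $c$) produces a bound comparing $\int_{\partial D}\langle V,\vec n_y\rangle\,|\nabla G|^2\,d\sigma_D$ with tangential-derivative terms plus a lower-order interior term controlled by Harnack's inequality using $B(c,\rho_1)\subset D\subset B(c,\rho_2)$. Since $\partial G/\partial\vec n_y \,d\sigma_D = -d\omega_D^c$, this yields an $L^2(d\sigma_D)$ bound on the density $k=d\omega_D^c/d\sigma_D$ in terms of $\omega_D^c$ and the geometric data.

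The key point for the localization hypothesis is this: the global Rellich identity controls $\int_{\partial D} k^2\,d\sigma_D$ only by a constant depending on $\max_{\partial D}\vartheta_{D,c}$, which we do \emph{not} assume is small -- only $\vartheta_{D,c}\le\vartheta_0<\pi/2$ on the surface ball $\Delta_D(y_0,r_0)$. So I would localize the Rellich computation. Concretely, for $\Delta_D(y,r)\subset\Delta_D(y_0,r_0)$ one chooses a cutoff $\varphi$ supported in a Carleson region $T(\Delta_D(y,2r))$ above the surface ball, replaces $V$ by $\varphi^2 V$ (or a truncated analogue that still has nonnegative normal component near $\partial D$), and runs the integration by parts. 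The ``bad'' solid-integral terms generated by derivatives of $\varphi$ live at scale $r$ away from $\partial D$, where $|\nabla G|$ is controlled by the boundary Harnack principle / doubling of $\omega_D^c$ -- here one invokes the standard nontangential maximal function estimates and the comparison $\omega_D^c(\Delta_D(y,r))\approx r^{n-2}G(A_r,c)$ for a corkscrew point $A_r$ above $y$, with constants depending only on $n$ and $h$ (and $\rho_2/\rho_1$ for the normalization of the pole). The ellipticity factor $\cos\vartheta_{D,c}\ge\cos\vartheta_0>0$ on $\Delta_D(y_0,r_0)$ is exactly what makes the surface term on the left nondegenerate, giving
\begin{equation}
\int_{\Delta_D(y,r)} k^2\,d\sigma_D \;\le\; \frac{C(n,h,\rho_2/\rho_1)}{\cos\vartheta_0}\, \frac{\omega_D^c(\Delta_D(y,2r))}{r}\,\sup_{\Delta_D(y,2r)} k \;+\;\text{(lower order)},
\end{equation}
and then a self-improvement/good-$\lambda$ or Gehring-type argument (again standard, constants depending only on the listed data) upgrades this into the scale-invariant reverse H\"older inequality \eqref{2holder} with averages, after dividing by $\sigma_D(\Delta_D(y,r))\approx r^{n-1}$.

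I expect the main obstacle to be the bookkeeping in the localized Rellich identity: ensuring the truncated vector field $\varphi^2(y-c)$ retains a nonnegative (indeed, bounded-below-by-$c\cos\vartheta_0$) normal component on the part of $\partial D$ that matters, and that every error term introduced by $\nabla\varphi$, by the truncation, and by the cutoff near $c$ is absorbed with a constant that is genuinely a function of only $n$, $h$, $\rho_2/\rho_1$, and $\vartheta_0$ -- not of $\max_{\partial D}\vartheta_{D,c}$. The second, more routine, obstacle is assembling the auxiliary estimates (nontangential maximal function bounds for $G$, the comparison principle, doubling of $\omega_D^c$, Harnack chains from $c$ to corkscrew points) with quantitative constants; these are all in \cite{JKbull} or \cite{JK}, but one must check each is stated with dependence only on $n$ and the Lipschitz constant $h$. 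Once those are in hand, the passage from the localized $L^2$ bound to \eqref{2holder} is the classical argument of Dahlberg and of Jerison--Kenig, essentially verbatim.
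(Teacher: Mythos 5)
Your plan is essentially the paper's own: the paper gives no written proof of this proposition, stating only that it is ``adapted from the proof of Theorem 2.9 in \cite{JKbull},'' i.e.\ precisely the Rellich--Ne\v{c}as identity argument with the radial field $y-c$ that you describe, with the observation that star-shapedness makes all boundary terms nonnegative, that $\cos\vartheta_{D,c}\geq\cos\vartheta_0$ on $\Delta_D(y_0,r_0)$ supplies the needed nondegeneracy there, and that every auxiliary constant depends only on $n$, $h$, $\rho_2/\rho_1$, $\vartheta_0$. Your audit of the constant dependencies and the localization issue is exactly the content the paper defers to \cite{JKbull}, so the approaches coincide.
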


Here the dashed integral $\dashint_{\Delta} k d\sigma=(\sigma(\Delta))^{-1}\int_\Delta k d\sigma$ denotes an average. By the theory of $A_\infty$ weights, if condition (\ref{2holder}) holds, then (\ref{Lsceq}) also holds on $\Delta_D(y_0,r_0)$ with constants $\delta$ and $\varepsilon$ which depend only on $n$ and $C$.

Let us return our attention to the comparison of harmonic measure and surface measure on $\Omega_L$.
First we cover $\Omega_L$ using two types of star-shaped Lipschitz domains (see Fig. 4) and estimate harmonic measure in each case separately.

\begin{figure}[t]
\includegraphics[width=.4\textwidth]{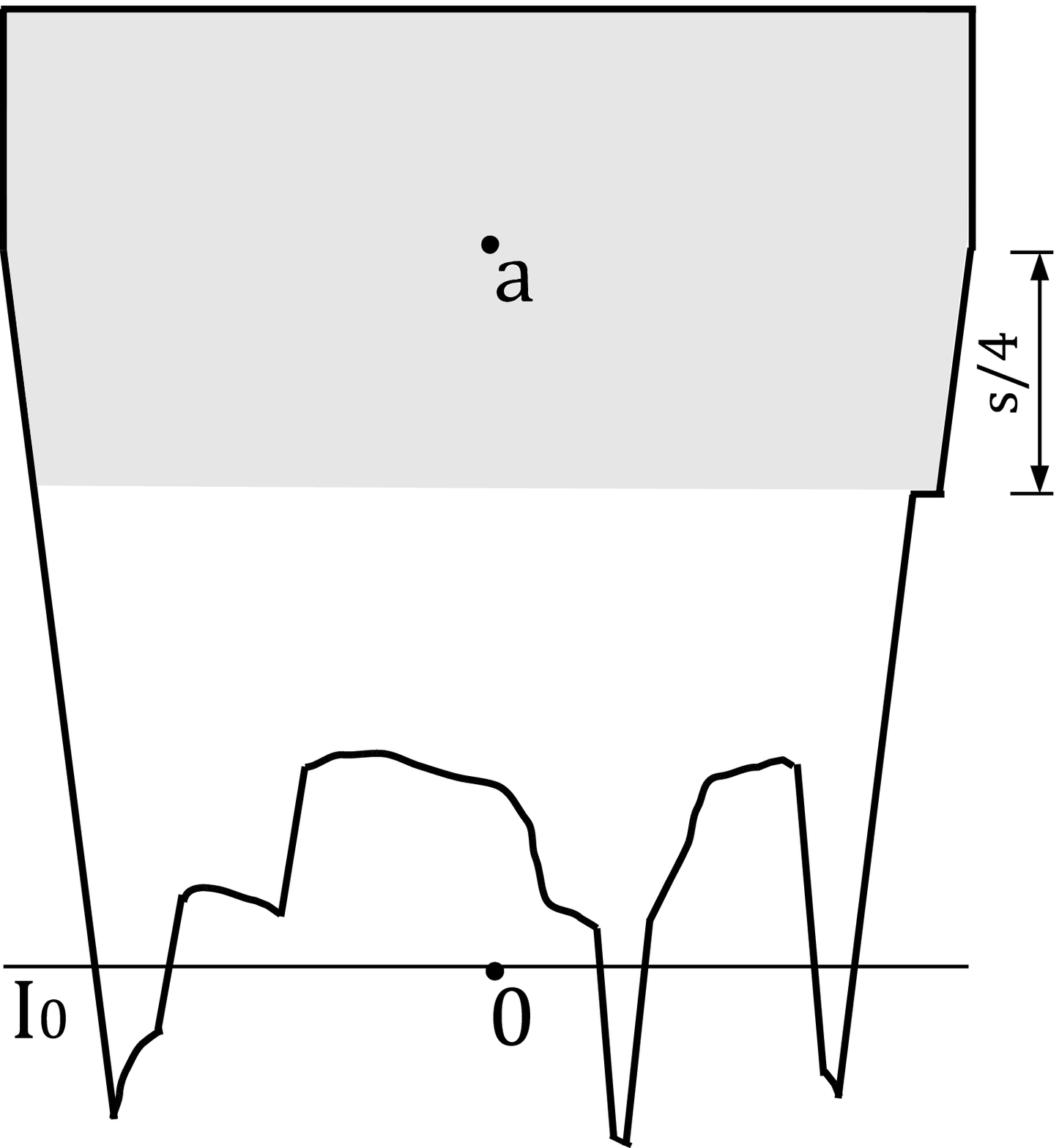}
\includegraphics[width=.4\textwidth]{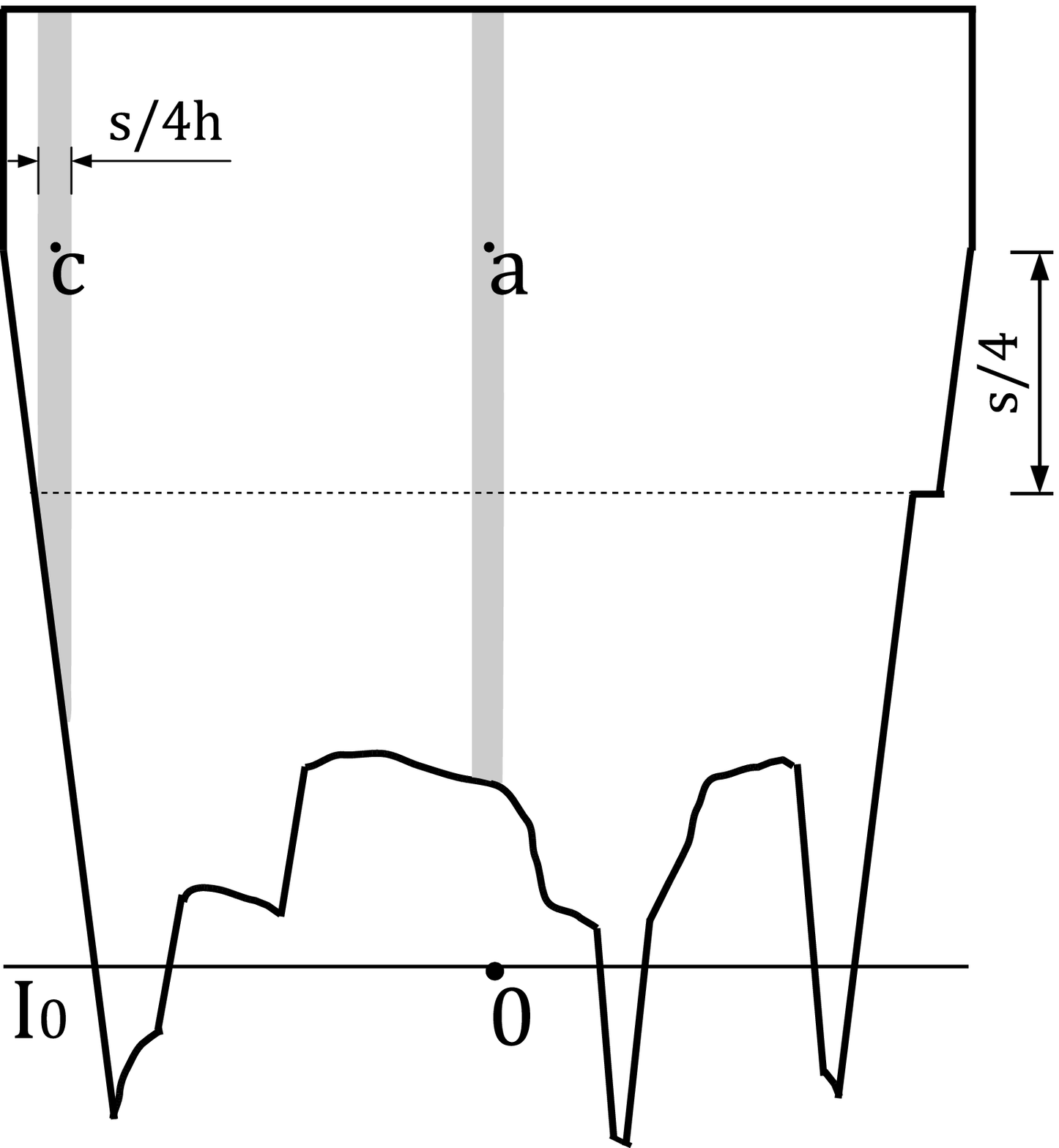}
\caption{Star-shaped domains in Lemmas 2.11 and 2.12}
\end{figure}

\begin{lemma}\label{toplemma} Up to a dilation, the Lipschitz domain \begin{equation}\Dtop = \{y\in \Omega_L: f(y)>a_n-s/4\}\end{equation} is determined by $n$ and $h$. Moreover, there exists a constant $\eta_0=\eta_0(n,M,\gamma)$ such that
\begin{equation}
\label{topinfty}\omega^a_L(E)\leq \eta_0\Rightarrow \surf(E)\leq\frac{\psi}{4}r^{n-1}\quad\text{for all }E\subset \partial\Omega_L\cap\partial \Dtop\end{equation} Here $\omega_L^a$ denotes harmonic measure of $\Omega_L$ with pole at $a$.
\end{lemma}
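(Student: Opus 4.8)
\textbf{Proof proposal for Lemma \ref{toplemma}.}

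The plan is to exploit the fact that $\Dtop$ is, up to dilation, a fixed Lipschitz domain depending only on $n$ and $h$, so that its harmonic measure (with an appropriate fixed pole) and surface measure are mutually absolutely continuous with scale-invariant quantitative control. The first step is to pin down the scaling: $\Dtop$ is bounded above by the slab $I_0\times(a_n-s/4,a_n+s/4)$, it contains the ball $B(a,s/4)$, and its lower boundary is the truncated Lipschitz graph $\{u=\max(F(x),a_n-s/4)\}$; rescaling by $1/s$ (and translating so $a\mapsto 0$) yields a Lipschitz domain $\widetilde D$ with Lipschitz constant $h$, trapped between $B(0,1/4)$ and $B(0,c(n))$, whose shape depends only on $n$ and $h$. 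Note however that $h$ itself depends on $n,M,\gamma$ via Proposition \ref{PropG}; this is why the constant $\eta_0$ in the conclusion is allowed to depend on $\gamma$.

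The second step is to transfer the pole. We have $a\in B(a,s/4)\subset\Dtop$, so $\omega_L^a$ restricted to $\partial\Dtop\cap\partial\Omega_L$ is comparable to $\omega_{\Dtop}^a$ on the same set: by the maximum principle $\omega_L^a(E)\ge\omega_{\Dtop}^a(E)$ for $E\subset\partial\Omega_L\cap\partial\Dtop$ (since on the part of $\partial\Dtop$ interior to $\Omega_L$ the harmonic measure of $\Dtop$ is at most $1$, while on $\partial\Omega_L\cap\partial\Dtop$ it agrees with the indicator of $E$ as a boundary datum dominated by the corresponding $\Omega_L$-harmonic function). Hence it suffices to prove (\ref{topinfty}) with $\omega_L^a$ replaced by $\omega_{\Dtop}^a$. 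Now apply Theorem \ref{Dthm} (Dahlberg's theorem) to $\Dtop$: $\omega_{\Dtop}^a\in A_\infty(\sigma_{\Dtop})$. Crucially, by Proposition \ref{angles} applied with center $c=a$ — valid because $\Dtop$ is star-shaped with respect to $a$ (the added vertical cylinder $I_0\times[a_n,a_n+s/4]$ and the region above a graph seen from an interior point of the top cylinder are both star-shaped from $a$, with angle function bounded away from $\pi/2$ in terms of $h$) — the $A_\infty$ constant $C$ depends only on $n$, $h$, the ratio $\rho_2/\rho_1\le c(n)/(1/4)=4c(n)$, and the angle bound $\vartheta_0=\vartheta_0(n,h)$; that is, $C=C(n,h)$, and since $h=h(n,M,\gamma)$ we get $C=C(n,M,\gamma)$.

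The third step is to convert the $A_\infty$ bound into the stated threshold form. By the theory of $A_\infty$ weights, $\omega_{\Dtop}^a\in A_\infty(\sigma_{\Dtop})$ with constant $C$ implies its symmetric counterpart: there exist $\delta,\varepsilon\in(0,1)$ depending only on $n$ and $C$ such that for any $E\subset\partial\Dtop$, $\omega_{\Dtop}^a(E)\le\varepsilon\,\omega_{\Dtop}^a(\partial\Dtop) \Rightarrow \sigma_{\Dtop}(E)\le\delta\,\sigma_{\Dtop}(\partial\Dtop)$. Iterating this implication $N$ times (each application shrinks the surface-measure fraction, so after finitely many steps it drops below any prescribed level) gives: for every $\kappa>0$ there is $\eta=\eta(n,C,\kappa)>0$ with $\omega_{\Dtop}^a(E)\le\eta \Rightarrow \sigma_{\Dtop}(E)\le\kappa\,\sigma_{\Dtop}(\partial\Dtop)$. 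By the scaling, $\sigma_{\Dtop}(\partial\Dtop)\le c'(n,h)s^{n-1}$ and $s^{n-1}=r^{n-1}/(2M\sqrt{n-1})^{n-1}$, so choosing $\kappa$ so that $\kappa\,c'(n,h)s^{n-1}\le(\psi/4)r^{n-1}$ — which is possible with $\kappa=\kappa(n,M,h)=\kappa(n,M,\gamma)$ since $\psi=\psi(n,M)$ — and setting $\eta_0=\eta(n,C,\kappa)=\eta_0(n,M,\gamma)$ yields the conclusion, using $\sigma_{\Dtop}\ge\surf$ on $\partial\Omega_L\cap\partial\Dtop$. The main obstacle is bookkeeping: verifying that $\Dtop$ really is star-shaped from a point in the top cylinder with a uniform angle bound (so that Proposition \ref{angles} applies with the desired dependencies), and then being careful that every constant introduced — $C$, $\delta$, $\varepsilon$, $\kappa$, $\eta_0$ — depends only on $n$, $M$, $\gamma$ and not on the scale $r$ or on the specific graph $F$.
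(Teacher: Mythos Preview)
Your argument is correct and follows the same route as the paper: compare $\omega_L^a$ to $\omega_{\Dtop}^a$ via the maximum principle, apply Dahlberg's theorem to $\Dtop$, and convert the resulting $A_\infty$ estimate into threshold form. The paper is slightly more direct---since $F\leq a_n-s/4$ everywhere, the lower boundary of $\Dtop$ is the flat level set $\{f=a_n-s/4\}$ (not a truncated graph), so $\Dtop$ is literally a fixed domain determined by $n$ and $h$; one may therefore cite Theorem~\ref{Dthm} with constants $p,q$ depending only on $n,h$ and use the power-law form $\surf(E)\leq p\,[\omega_{\mathrm{top}}^a(E)]^q\,\surf(\partial\Dtop)$ to solve $\eta_0=(\psi/4C_0p)^{1/q}$ directly, bypassing both Proposition~\ref{angles} and the iteration.
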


\begin{proof} The Lipschitz function $F:I_0\rightarrow\RR$ used to build $\Omega_L$ satisfied $F(x)\leq a_n-s/4$ for all $x\in I_0$. Hence $\Dtop$ is a fixed subset of the region $\interior(\mathcal{T}\cup (I_0\times[a_n,a_n+s/4]))$ and determined by $n$ and $h$. By Theorem \ref{Dthm}, $\omega^a_{\mathrm{top}}\in A_\infty(\sigma_{\mathrm{top}})$, where $\omega_{\mathrm{top}}^a$ is harmonic measure on $\Dtop$ with pole at $a$ and $\sigma_{\mathrm{top}}=\surf\res \partial\Dtop$. Thus there exist constants $p,q>0$ depending only on $n$ and $h$ such that \begin{equation}\surf(E)\leq  p\left[\omega^a_{\mathrm{top}}(E)\right]^{q} \surf(\Dtop) \quad\text{for all } E\subset\partial\Dtop.\end{equation} On one hand, $\omega^a_{\mathrm{top}}(E)\leq \omega^a_L(E)$ for all $E\subset \partial \Omega_L\cap\partial\Dtop$, by the maximum principle. On the other hand, $\surf(\Dtop)\leq C_0 r^{n-1}$ for some $C_0=C_0(n,h)$. Therefore, \begin{equation}\surf(E)\leq C_0p \left[\omega^a_L(E)\right]^q r^{n-1}\quad\text{for all }E\subset\partial\Omega_L\cap \partial\Dtop,\end{equation} and the constant $\eta_0 = \left(\psi/4C_0p\right)^{1/q}$ depending only on $n$, $M$ and $\gamma$ suffices.
\end{proof}

\begin{lemma}\label{starlemma} Set $h^*=h\sqrt{n-1}$. For each $c\in I_1$ define \begin{equation} D_c=\{y\in\Omega_L:\|\pi(y)-\pi(c)\|_{\infty}<s/8h^*\}\end{equation} where $\|x\|_{\infty}=\max_i|x_i|$ for all $x\in\RR^{n-1}$. Assume  $\|\pi(c)\|_\infty \leq s/2-s/4h^*-s/8h^*$. Then $D_c$ is a star-shaped Lipschitz domain with center $c$, the Lipschitz constant of $D_c$ is at most $h$ and $B(c,s/8h^*)\subset D_c\subset B(c,4Ms\sqrt{n-1})$. Moreover, there exists a constant $\vartheta_1=\vartheta_1(n,M,\gamma)<\pi/2$ such that $\vartheta_{D,c}\leq \vartheta_1$ on $\partial D_c$ almost surely.
\end{lemma}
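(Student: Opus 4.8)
The plan is to verify the three claimed properties of $D_c$ directly from the explicit description of $\Omega_L$ as the region above the graph of the $h$-Lipschitz function $F:I_0\to\RR$ (extended upward), intersected with a slab over the $(n-1)$-cube $\{\|\pi(y)-\pi(c)\|_\infty<s/8h^*\}$.

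First I would identify $D_c$ concretely: it is $\{(x,u): \|x-\pi(c)\|_\infty<s/8h^*,\ F(x)<u\}$ intersected with the upward extension, and the hypothesis $\|\pi(c)\|_\infty\le s/2-s/4h^*-s/8h^*$ guarantees this small cube is compactly contained in the base $I_0$, so the lateral boundary of $D_c$ consists of genuine vertical faces and the bottom face is a piece of $\Gamma$; hence $D_c$ is a Lipschitz domain with the same Lipschitz constant $h$ as the graph $\Gamma$ (the vertical extension at the top and the vertical sides only improve matters). For the inclusions $B(c,s/8h^*)\subset D_c\subset B(c,4Ms\sqrt{n-1})$: the inner ball lies in $D_c$ because $c\in I_1$ sits at height $a_n$, the graph lies at height $\le a_n-s/4$, and $s/8h^*\le s/4$, while $B(c,s/8h^*)$ projects into the slab since $s/8h^*\le s/8h^*$ in the $\ell^\infty$ sense (Euclidean radius is smaller); the outer ball containment follows because $\Omega_L\subset B(Q,r)$ and $\diam\Omega_L\lesssim r = 2Ms\sqrt{n-1}$, so every point of $D_c\subset\Omega_L$ is within $4Ms\sqrt{n-1}$ of $c$ after crude estimation.

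The main content — and the main obstacle — is the star-shapedness together with the uniform angle bound $\vartheta_{D_c,c}\le\vartheta_1<\pi/2$. There are two types of boundary: the vertical lateral faces (and the flat top of the extension), and the graph piece $\Gamma$. On the vertical faces the outer normal is horizontal; since $c$ sits at height $a_n$ strictly above every point of those faces (they range in height from $F(x)$ up to roughly $a_n+s/4$, but the key is that $c-y$ has a definite vertical component relative to its horizontal component because the horizontal displacement is at most $s/8h^*\cdot\sqrt{n-1}\le s/8h$ while the point is within the slab), one gets $\langle\vec n_y, c-y\rangle>0$ with a quantitative lower bound on $\cos\vartheta$ depending only on $n$ and $h$. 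On the graph piece $\Gamma$, the outer normal of $\Omega_L$ points downward with slope at most $h$, i.e. $\vec n_y = (\nabla F(x),-1)/\sqrt{1+|\nabla F|^2}$, and $c-y$ has vertical component $a_n - F(x)\ge s/4>0$ and horizontal component of norm at most $s/8h^*\sqrt{n-1}=s/8h$; feeding these into $\cos\vartheta = \langle\vec n_y,c-y\rangle/\|c-y\|$ and using $|\nabla F|\le h$ gives a lower bound for $\cos\vartheta$ bounded away from $0$ in terms of $n$ and $h$ only. Star-shapedness then follows since $\cos\vartheta>0$ everywhere forces each ray from $c$ to cross $\partial D_c$ exactly once.

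Putting these together, $\vartheta_{D_c,c}\le\vartheta_1$ with $\vartheta_1<\pi/2$ depending only on $n$ and $h$, and since $h$ depends only on $n$, $M$, $\gamma$ (by Theorem \ref{approxthm}), so does $\vartheta_1$; likewise the ratio $\rho_2/\rho_1 = 32M(s\sqrt{n-1})h^*/s = 32Mh\,(n-1)$ and the Lipschitz constant depend only on these parameters, which is exactly what is needed to later invoke Proposition \ref{angles}. The delicate point to be careful about is the corner geometry where $\Gamma$ meets the vertical faces — one should check the angle bound is uniform up to the boundary and that the two pieces glue into a bona fide Lipschitz domain — but this is routine once the slab is taken small enough (which is the role of the factor $1/8h^*$ in the definition of $D_c$), since near a corner the domain is bi-Lipschitz to a standard wedge.
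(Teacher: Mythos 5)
Your argument is essentially the paper's, written in graph coordinates rather than in the cone formalism: the decisive quantitative inputs are the same (the vertical gap $a_n-F(x)\geq s/4$ coming from the truncation of $F$, set against the horizontal displacement $|\pi(y)-\pi(c)|\leq\sqrt{n-1}\cdot s/8h^*=s/8h$, so that the Lipschitz loss $h\cdot s/8h=s/8$ cannot cancel the gain $s/4$), and the decomposition of $\partial D_c$ into the box part and the graph part $\Gamma_c$ is the same. Two points need repair. First, the angle function is $\cos\vartheta=\langle\vec n_y,y-c\rangle/\|y-c\|$, not $\langle\vec n_y,c-y\rangle/\|c-y\|$; with the outer normal $(\nabla F(x),-1)/\sqrt{1+|\nabla F(x)|^2}$ pointing downward and $c$ lying above $y$, the pairing you wrote is negative and would give $\vartheta>\pi/2$. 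Once the sign is fixed your computation is correct: $\langle(\nabla F,-1),y-c\rangle\geq s/4-s/8=s/8$, and dividing by $\sqrt{1+h^2}\,\|y-c\|\leq\sqrt{1+h^2}\cdot 4Ms\sqrt{n-1}$ bounds $\cos\vartheta$ below in terms of $n$, $M$, $h$ alone (the paper reaches the same conclusion by comparing the cones $-\mathcal{C}_{1/h}\ni\vec n_y$ and $-\mathcal{C}_{2h}\ni y-c$, getting $\cos\vartheta\geq 1/h\sqrt{10}$). Second, you deduce star-shapedness from the a.e.\ bound $\cos\vartheta>0$; this is backwards, since the normal exists only $\surf$-a.e.\ and the implication ``positive pairing a.e.\ implies each ray from $c$ meets $\partial D_c$ once'' is itself a lemma requiring proof. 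Prove star-shapedness directly instead, which costs nothing extra with the estimates you already have: for $y=(x,F(x))\in\Gamma_c$ and $t\in(0,1)$, the point $(1-t)c+ty$ has height $F(x)+(1-t)(a_n-F(x))\geq F(x)+(1-t)s/4$, while $F$ evaluated at its projection is at most $F(x)+h(1-t)|x-\pi(c)|\leq F(x)+(1-t)s/8$, so the open segment stays strictly above $\Gamma$ and, by convexity of the slab and the height range, inside $D_c$; this is exactly the paper's observation that $c\in y+\mathcal{C}_{2h}\subset y+\mathcal{C}_h$ while $(y+\mathcal{C}_h)\cap\Gamma=\{y\}$. The remaining items (the box identity for $D_c\cap\Dtop$, the two ball inclusions, and the fact that $\vartheta_1$ depends on $n$, $M$, $\gamma$ only through $h$) are handled as you indicate.
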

\begin{proof} Fix any point $c\in I_1$ such that $\|\pi(c)\|_{\infty}\leq s/2-s/4h^*-s/8h^*$. This condition on $c$ guarantees that  $\|\pi(y)\|_\infty \leq s/2-s/4h^*$ for all $y\in D_c$ and the top portion of $D_c$ is a box that (up to translation) is independent of $c$:  \begin{equation}\label{topisbox}D_c\cap \Dtop=(\pi(c)-s/8h^*,\pi(c)+s/8h^*)^{n-1}\times (a_n-s/4,a_n+s/4).\end{equation} Hence $B(c,s/8h^*)\subset D_c$. The inclusion $D_c\subset B(c,4Ms\sqrt{n-1})$ follows from (\ref{I1Tgap}). The bottom portion of $D_c$ (i.e.\ $D_c\cap f^{-1}(-\infty, a_n-s/4)$) is the area above the graph \begin{equation} \Gamma_c= \{y\in\partial\Omega_L\cap\Gamma:\|\pi(y)-\pi(c)\|_\infty<s/8h^*\}.\end{equation}
For all $p>0$ let $\mathcal{C}_p=\{z\in\RR^n:f(z)\geq p|\pi(z)|\}$ denote the cone opening upwards with slope $p$. The cone used to define $\Omega_L$ above was $\mathcal{C}=\mathcal{C}_{h}$.
If $y\in \Gamma_c$, then $c\in y+\mathcal{C}_{2h}$ since $f(c)-f(y)=a_n-f(y)\geq s/4$ and $|\pi(c)-\pi(y)|\leq s/8h$. Because $\mathcal{C}_{2h}\subset\mathcal{C}_h$ and  $(y+\mathcal{C}_h)\cap\Gamma=\{y\}$, the open line segment from $c$ to $y\in \Gamma_c$ is contained in $D_c$. Thus $D_c$ is star-shaped with respect to $c$. It remains to bound the angle function.

Suppose $\vec n_y$ is an outer normal to $\partial D_c$ defined at $y\in\Gamma_c$. On one hand, $\vec n_y \in -{C}_{1/h}$, since $(y+\mathcal{C}_h)\cap\Gamma=\{y\}$. On the other hand, $y-c\in -\mathcal{C}_{2h}$. The greatest angle between a vector $\vec v\in -\mathcal{C}_{1/h}$ and a vector $\vec w\in -\mathcal{C}_{2h}$ is obtained by $\vec v=(1,0,\dots,-1/h)$ and $\vec w=(-1,0,\dots,-2h)$; in this case, \begin{equation}\cos \vartheta = \frac{\langle \vec v,\vec w\rangle}{\|\vec v\|\|\vec w\|} =\frac{1}{(1+h^{-2})^{1/2}(1+4h^2)^{1/2}}\geq  \frac{1}{h\sqrt{10}}.\end{equation} We conclude $\vartheta_{D,c}(y) \leq \cos^{-1}(1/h\sqrt{10})<\pi/2$ for almost every $y\in\Gamma_c$. Bounding $\vartheta_{D,c}$ on $\partial D_c\setminus \Gamma_c$ (that is, on sides of a box) is easier and left to the reader.\end{proof}

Equipped with Proposition \ref{angles}, Lemma \ref{toplemma} and Lemma \ref{starlemma}, we are ready to compare harmonic measure and surface measure on $\Omega_L$.

\begin{lemma}\label{etalemma} There exists a constant $0<\eta<1$ depending only on $n$, $M$ and $\gamma$ with the following property. For every Borel set $E\subset\partial\Omega_L$, \begin{equation}
\label{etainfty}\omega^a_L(E)\leq \eta\Rightarrow \surf(E)\leq\frac{\psi}{2}r^{n-1}.\end{equation} Here $\omega_L^a$ denotes harmonic measure on $\Omega_L$ with pole at $a$.
\end{lemma}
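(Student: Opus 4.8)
The plan is to cover $\partial\Omega_L$ by the boundaries of the star-shaped pieces $\Dtop$ and $D_c$ introduced in Lemmas \ref{toplemma} and \ref{starlemma}, to bound $\surf$ against $\omega^a_L$ on each piece using Proposition \ref{angles}, and then to reassemble and choose $\eta$. The main obstacle I expect is the purely geometric covering step; once that is done the rest is bookkeeping.

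\emph{Covering.} Fix a grid of centers $c_1,\dots,c_N\in I_1$ of spacing comparable to $s/h^*$ inside the cube $\{\|\pi(c)\|_\infty\le s/2-s/4h^*-s/8h^*\}$, so that each $D_{c_i}$ satisfies the hypotheses of Lemma \ref{starlemma}; here $N=N(n,h)$. I claim $\partial\Omega_L\subset\partial\Dtop\cup\bigcup_{i=1}^N\partial D_{c_i}$. Any $y\in\partial\Omega_L$ with $f(y)\ge a_n-s/4$ lies on $\partial\Dtop$. If instead $f(y)<a_n-s/4$, then $y$ lies on the graph $\Gamma$ or on a lateral wall of $\mathcal T$; in either case $y\in\overline{\mathcal T}$, and since $\mathcal T\cap f^{-1}(t)=\{\|\cdot\|_\infty\le s/2-(a_n-t)/h\}$ and $a_n-f(y)>s/4$, we get $\|\pi(y)\|_\infty<s/2-s/4h\le s/2-s/4h^*$ (using $h^*=h\sqrt{n-1}\ge h$). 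Thus $\pi(y)$ lies strictly inside the region swept out by the cubes $\{\|\pi(\cdot)-\pi(c_i)\|_\infty<s/8h^*\}$, so $y\in\partial D_{c_i}$ for some $i$.

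\emph{Estimates on the pieces.} For each $i$, Lemma \ref{starlemma} gives $\vartheta_{D_{c_i},c_i}\le\vartheta_1(n,M,\gamma)<\pi/2$ on all of $\partial D_{c_i}$ and radii with $\rho_2/\rho_1=32Mh(n-1)$; Proposition \ref{angles}, applied with $\Delta_{D_{c_i}}(y_0,r_0)=\partial D_{c_i}$ (take $r_0\ge\diam D_{c_i}$) and $\vartheta_0=\vartheta_1$, yields the reverse H\"older inequality (\ref{2holder}) for $k_i=d\omega^{c_i}_{D_{c_i}}/d\sigma_{D_{c_i}}$ on every surface ball. By the remark following Proposition \ref{angles} together with the standard $A_\infty$ self-improvement to a power bound, there are $p_i,q_i>0$ depending only on $n,M,\gamma$ with $\surf(E')\le p_i[\omega^{c_i}_{D_{c_i}}(E')]^{q_i}\surf(\partial D_{c_i})$ for all Borel $E'\subset\partial D_{c_i}$, and $\surf(\partial D_{c_i})\le C_0 r^{n-1}$ with $C_0=C_0(n,h)$ since $D_{c_i}$ is Lipschitz of constant $\le h$ and diameter $\le 2r$. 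To replace the pole $c_i$ by $a$, for $E'\subset\partial D_{c_i}\cap\partial\Omega_L$ I would use domain monotonicity of harmonic measure, $\omega^{c_i}_{D_{c_i}}(E')\le\omega^{c_i}_L(E')$, followed by Harnack's inequality $\omega^{c_i}_L(E')\le C_H\omega^a_L(E')$ with $C_H=C_H(n,h)$ --- legitimate because $a$ and $c_i$ both lie in $I_1\subset\Omega_L$ and are joined there by a Harnack chain of bounded length. For the top piece I would instead quote the intermediate estimate inside the proof of Lemma \ref{toplemma}: $\surf(E')\le C_0 p[\omega^a_L(E')]^q r^{n-1}$ for $E'\subset\partial\Omega_L\cap\partial\Dtop$, with $p,q$ depending only on $n,h$.

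\emph{Reassembly.} Given a Borel $E\subset\partial\Omega_L$, put $E_0=E\cap\partial\Dtop$ and $E_i=(E\cap\partial D_{c_i})\setminus(\partial\Dtop\cup\partial D_{c_1}\cup\dots\cup\partial D_{c_{i-1}})$ for $1\le i\le N$, so that $E=\bigcup_{i=0}^N E_i$ by the covering step. Summing the bounds above, using $\omega^a_L(E_i)\le\omega^a_L(E)$ and $x^{q_i}\le x^{q_*}$ for $x\in[0,1]$ where $q_*=\min(q,q_1,\dots,q_N)>0$, and noting that there are at most $N(n,h)+1$ terms, we obtain $\surf(E)\le K[\omega^a_L(E)]^{q_*}r^{n-1}$ with $K=K(n,M,\gamma)$. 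Then $\eta=\min\{(\psi/(2K))^{1/q_*},\,\frac{1}{2}\}$ lies in $(0,1)$, depends only on $n,M,\gamma$, and forces $\omega^a_L(E)\le\eta\Rightarrow\surf(E)\le\frac{\psi}{2}r^{n-1}$. Apart from the covering computation, all that remains is to check that every constant above ($N$, $C_0$, the $p_i,q_i$, $C_H$, $p$, $q$, hence $K$ and $\eta$) depends at worst on $n$, $M$, and $\gamma=\surf(\Delta(Q,r))/r^{n-1}$ --- and on nothing about $\Omega$ or about the scale $r$.
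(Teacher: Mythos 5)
Your proof is correct and follows essentially the same route as the paper: cover $\partial\Omega_L$ by $\partial\Dtop$ together with the star-shaped boxes $D_{c_i}$, run Theorem \ref{Dthm} on the top piece and Proposition \ref{angles} with Lemma \ref{starlemma} on each $D_{c_i}$, transfer poles by the maximum principle plus a Harnack chain from $c_i$ to $a$, and sum over the boundedly many pieces. The only differences are cosmetic --- you use the power-bound form of $A_\infty$ and an explicit partition of $E$ where the paper uses the $(\delta,\varepsilon)$ form, and your covering by $\partial D_{c_i}$ (rather than just the graph pieces $\Gamma_{c_i}$) makes the treatment of the lateral walls of $\mathcal{T}$ slightly more explicit.
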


\begin{proof}
Choose points $c_1,\dots, c_{i_0}\in I_1$ such that $\|\pi(c_i)\|_\infty \leq s/2-s/4h^*-s/8h^*$ and \begin{equation}(-s/2+s/4h^*,s/2-s/4h^*)^{n-1}\times\{a_n\}\subset \bigcup_i D_{c_i}.\end{equation} We can make this choice so that $i_0$ only depends on $n$ and $h$. Notice that \begin{equation}\label{etacover}\partial\Omega_L \subset \partial \Dtop \cup\bigcup_i \Gamma_{c_i}.\end{equation}
The points $c_i$ and  $a$ lie inside $\Dtop$, at a uniform distance away from $\partial\Dtop$. By Lemma \ref{toplemma} and  Harnack's inequality, there exists a constant $C_1=C_1(n,h)>1$ such that \begin{equation}\label{L2harnack}\omega^{c_i}_L(E)\leq C_1 \omega^a(E)\quad\text{for all }1\leq i\leq i_0\text{ and } E\subset\partial\Omega_L.\end{equation} Thus, in view of (\ref{topinfty}), (\ref{etacover}) and (\ref{L2harnack}), to prove Lemma \ref{etalemma} it suffices to display $\eta=\eta(n,M,\gamma)\in (0,\eta_0)$ small enough so that \begin{equation}\label{etagoal} \omega_L^{c_i}(E \cap \Gamma_{c_i})\leq C_1 \eta \Rightarrow \surf(E\cap \Gamma_{c_i}) \leq \frac{\psi}{4 i_0} r^{n-1}\end{equation} for all $1\leq i\leq i_0$ and $E\subset \partial\Omega$.

By Proposition \ref{angles} and Lemma \ref{starlemma}, $D_{c_i}$ is a star-shaped Lipschitz domain whose harmonic measure $\omega_D^{c_i}$ satisfies $(\ref{2holder})$ on every disk  for some constant $C_2$ depending only on $n$, $M$ and $\gamma$. An equivalent form of the $A_\infty$ condition states that for every $\varepsilon>0$ there exists $\delta=\delta(n,C_2,\varepsilon)>0$ such that \begin{equation} \omega_{D}^{c_i}(E\cap \Gamma_{c_i})\leq \delta \Rightarrow \surf(E\cap\Gamma_{c_i})\leq\varepsilon \surf(\partial D_{c_i}).\end{equation} But $\surf(\partial D_{c_i})\leq C_3(n,h) r^{n-1}$, so we can assign $\varepsilon=C_3^{-1}\psi/4i_0$ to find a constant $\delta=\delta(n,M,\gamma)>0$ such that \begin{equation}\label{etaeq1}\omega_{D}^{c_i}(E\cap \Gamma_{c_i})\leq \delta \Rightarrow \surf(E\cap\Gamma_{c_i})\leq \frac{\psi}{4i_0} r^{n-1}.\end{equation} Set $\eta = \min(\eta_0,\delta/C_1)$ so that $\eta$ only depends on $n$, $M$ and $\gamma$. Then (\ref{etagoal}) follows from (\ref{etaeq1}) and the maximum principle.
\end{proof}

\section{Proof of Proposition 2.8}
\label{section3}

We continue to assume the notation adopted in \S\ref{section21}. Let $\varepsilon>0$ be given. Our goal is to choose the slope $h\geq h_0$ of the cone $\mathcal{C}$ so that $\surf(\pi(T)\setminus\pi(T_\Gamma))\leq \varepsilon r^{n-1}$. In the course of exposition we shall introduce several constants and indicate their dependence on previously defined quantities; each one  will ultimately depend on at most $n$ (dimension), $M$ (corkscrew constant), $\gamma$ (upper bound at scale $r$) and $\varepsilon$. Following \cite{DJ}, we start by breaking up the set $\pi(T)\setminus\pi(T_\Gamma)$ into manageable pieces.

\begin{lemma}\label{Helper1} Let $H:\RR^{n-1}\rightarrow[0,\infty]$ be the function\begin{equation}
H(x)=\sup\left\{\frac{\surf(\Delta(Q,r)\cap \pi^{-1}(I))}{\surf(I)}
:I\subset\RR^{n-1}
\text{ is a cube and }x\in I\right\}.
\end{equation} If $\lambda_H(N)=\{x\in \RR^{n-1}:H(x)\geq N\}$, then $\surf(\lambda_H(N))\leq 5^{n-1}\gamma r^{n-1}/N$.\end{lemma}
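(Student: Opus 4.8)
The plan is to recognize $H$ as a Hardy--Littlewood maximal function and apply the standard weak-type $(1,1)$ bound. Introduce the finite Borel measure $\mu$ on $\RR^{n-1}$ given by $\mu(E)=\surf(\Delta(Q,r)\cap\pi^{-1}(E))$; that is, $\mu$ is the push-forward of $\surf\res\Delta(Q,r)$ under the projection $\pi$. By (\ref{gammadefn}) we have $\mu(\RR^{n-1})=\surf(\Delta(Q,r))\leq\gamma r^{n-1}<\infty$, which is exactly where finiteness of $\surf(\Delta(Q,r))$ is used. With the chosen normalization of Hausdorff measure, $\surf$ restricted to the hyperplane $\RR^{n-1}$ coincides with $(n-1)$-dimensional Lebesgue measure $|\cdot|$, so for a cube $I\subset\RR^{n-1}$ the quotient appearing in the definition of $H$ is precisely $\mu(I)/|I|$. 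Hence $H$ is the uncentered maximal function of $\mu$ relative to cubes,
\[
H(x)=\sup\left\{\frac{\mu(I)}{|I|}:I\subset\RR^{n-1}\text{ a cube},\ x\in I\right\},
\]
and the claim is the assertion $\surf(\lambda_H(N))=|\lambda_H(N)|\leq 5^{n-1}\mu(\RR^{n-1})/N$.

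To prove this weak-type estimate I would run the usual Vitali covering argument. First note that $\lambda_H(N)$ is Lebesgue measurable: $H$ is lower semicontinuous (restricting to cubes with rational vertices does not change $H$, so it is a countable supremum of lower semicontinuous functions), whence $\{H>t\}$ is open and $\lambda_H(N)=\bigcap_{m\geq1}\{H>N-1/m\}$ is Borel. Fix a compact set $K\subset\lambda_H(N)$ and a number $0<t<N$. For each $x\in K$ pick a cube $I_x\ni x$ with $\mu(I_x)>t\,|I_x|$; finitely many of these cover $K$, and the Vitali ($5r$-)covering lemma extracts a pairwise disjoint subfamily $I_{x_1},\dots,I_{x_m}$ whose concentric $5$-fold dilates still cover $K$. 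Then
\[
|K|\leq\sum_{j=1}^{m}|5I_{x_j}|=5^{n-1}\sum_{j=1}^{m}|I_{x_j}|<\frac{5^{n-1}}{t}\sum_{j=1}^{m}\mu(I_{x_j})\leq\frac{5^{n-1}}{t}\,\mu(\RR^{n-1})\leq\frac{5^{n-1}\gamma r^{n-1}}{t}.
\]
Letting $t\uparrow N$ and then taking the supremum over compact $K\subset\lambda_H(N)$ (inner regularity of Lebesgue measure) yields $\surf(\lambda_H(N))\leq 5^{n-1}\gamma r^{n-1}/N$, as desired.

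There is no genuine obstacle in this argument; it is the textbook proof of the weak $(1,1)$ inequality for the maximal function of a finite measure. The only points deserving a word of care are that the supremum defining $H(x)$ need not be attained by any single cube — handled by working with the auxiliary threshold $t<N$ and a compact exhaustion of $\lambda_H(N)$ — and the elementary bookkeeping that $\surf$ on $\RR^{n-1}$ is Lebesgue measure, so that the $5$-fold dilation of an $(n-1)$-dimensional cube contributes precisely the factor $5^{n-1}$ that appears in the statement.
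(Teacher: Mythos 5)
Your proof is correct and follows the same route as the paper: identify $H$ as the (uncentered) Hardy--Littlewood maximal function of the push-forward measure $\mu=\pi_\sharp(\surf\res\Delta(Q,r))$, whose total mass is at most $\gamma r^{n-1}$ by (\ref{gammadefn}), and apply the weak-type $(1,1)$ bound with constant $5^{n-1}$. The only difference is that the paper simply cites the maximal theorem (Theorem 2.19 in \cite{M}) where you reprove it via the Vitali $5r$-covering argument; your extra care about measurability of $\lambda_H(N)$ and non-attainment of the supremum is sound but not needed beyond what the cited theorem already provides.
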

\begin{proof} Just note $H(x)$ is the maximal function of the the measure \begin{equation}\mu=\pi_\sharp(\surf\res\Delta(Q,r))\end{equation} with respect to the Lebesgue measure on $\RR^{n-1}$. By the Hardy-Littlewood maximal theorem (for example, see Theorem 2.19 in \cite{M}), $\surf(\lambda_H(N))\leq 5^{n-1}\mu(\RR^{n-1})/N$. By (\ref{gammadefn}), $\mu$ has total mass $\mu(\RR^{n-1})\leq \gamma r^{n-1}$.\end{proof}

Note that the upper bound $\surf(\Delta(Q,r))\leq\gamma r^{n-1}$ on surface measure was used in the proof of Lemma \ref{Helper1}. It will not be used again until after proof of Lemma \ref{Helper7}.

For the remainder of the section, fix $N=2\cdot 5^{n-1}\gamma/\varepsilon$ so that the set $\Lambda=\lambda_H(N)$ of points in $\RR^{n-1}$ where the maximal function $H(x)$ is large has small measure, $\surf(\Lambda)\leq (\varepsilon/2)r^{n-1}$. We must control the size of $\pi(T)\setminus\pi(T_\Gamma)$ in $\Lambda^c=\RR^{n-1}\setminus\Lambda$. For each $y\in\mathcal{T}$, let $L(y)=[y,(\pi(y),a_n)]$ denote the vertical segment above $y$ in $\mathcal{T}$. Then the set of points \begin{equation}T_E=\{y\in T:L(y) \cap T=\{y\}\}\end{equation} denotes the ``top edge" of $\partial\Omega$ inside $\mathcal{T}$. Observe that $T_\Gamma\subset T_E\subset T$ and $\pi(T)=\pi(T_E)$. Let $\alpha$ be a large power of 2 to be chosen later (after Lemma \ref{Helper7}) and abbreviate $s_p=s/\alpha^p$ for all $p$. For each integer $k\geq 0$, define the set $F_k\subset I_0$ (see Fig. 5), \begin{equation} F_k=\{\pi(y): \exists\  y,z\in T_E \text{ such that } z\in y+\mathcal{C} \text{ and } s_k\leq f(z)-f(y)\leq  s_{k-1}\}.\end{equation}

\begin{figure}[t]
\includegraphics[width=.4\textwidth]{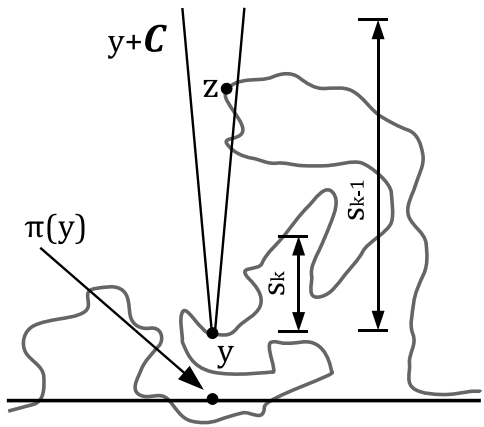}
\caption{A point $\pi(y)\in F_k$}
\end{figure}

By (\ref{I1Tgap}), if we choose $\alpha s=s_{-1}\geq a_n+b_n-s/2$, each bad point $x\in \pi(T)\setminus \pi(T_\Gamma)$ belongs to at least one $F_k$. For this reason, we will stipulate that \begin{equation}\alpha\geq 4M\sqrt{n-1} = 2r/s > s^{-1}(a_n+b_n-s/2).\end{equation} Thus, we can prove Proposition \ref{PropG} by verifying $\surf\left(\bigcup_k F_k\right)\leq \varepsilon r^{n-1}$, or because of Lemma \ref{Helper1}, by demonstrating that \begin{equation}\label{goal3} \sum_k\surf(F_k\setminus \Lambda)\leq\frac{\varepsilon}{2} r^{n-1}\quad\text{for some }h\geq h_0.\end{equation}

\begin{remark} We do not assert that $T_E$ or $F_k$ are measurable. While this fact is irksome, it does not hinder the proof. A careful reader will observe that we only use countable subadditivity of the outer measure $\surf$ in coverings involving $F_k$.
\end{remark}

The next lemma captures a simple idea. If the surface $\partial\Omega\cap\pi^{-1}(I)$ over a cube $I\subset\RR^{n-1}$ has a big vertical span relative to the width of $I$, then the maximal function is big on $I$ (by lower Ahlfors regularity). Thus the maximal theorem limits the frequency of ``vertical jumps" in $\partial\Omega$. Let $\beta=\beta(n,M)$ be the constant given in Lemma \ref{lowersurf}. Define the $n$-dimensional box $\mathcal{R}=2I_0\times [-b_n,a_n]$ and note $\mathcal{T}\subset\mathcal{R}\subset B(Q,r)$.

\begin{lemma} \label{Helper2} Let $I\subset 2I_0$ be a cube of side length $t$. Suppose one can find line segments $L$ in $\Omega\cap \mathcal{R}$ and $L'$ in $\Omega^-\cap\mathcal{R}$ such that $\pi(L)$ and $\pi(L')$ belong to $I$ and $f(L)\cap f(L')$ is a segment of length $\geq \lceil 4^{n-1}\beta^{-1}N\rceil t$. Then the cube $I$ belongs to $\Lambda$.\end{lemma}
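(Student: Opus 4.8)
The plan is to argue by contradiction: suppose $I\notin\Lambda$, i.e. the maximal function $H$ is smaller than $N$ at every point of $I$, and derive a lower bound on $\surf(\Delta(Q,r)\cap\pi^{-1}(I'))$ for a suitable cube $I'\supset I$ that contradicts the definition of $H$. The geometric input is the pair of segments $L\subset\Omega\cap\mathcal R$ and $L'\subset\Omega^-\cap\mathcal R$: since $\pi(L),\pi(L')\subset I$ and the overlap of the vertical ranges $f(L)\cap f(L')$ is an interval $J$ of length $\ell\geq \lceil 4^{n-1}\beta^{-1}N\rceil\, t$, for each height $h_0\in J$ we have a point of $\Omega$ and a point of $\Omega^-$ lying over $I$ at height $h_0$; any segment joining them must cross $\partial\Omega$. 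This is the same "separating" idea used in Lemma \ref{lowersurf} and Lemma \ref{piTLemma}, but applied on a thin slab rather than on a ball.

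The key steps, in order, are: (1) Partition the vertical interval $J$ into $m=\lfloor \ell/t\rfloor \geq \lceil 4^{n-1}\beta^{-1}N\rceil$ consecutive subintervals $J_1,\dots,J_m$ of length exactly $t$ (discarding the remainder). (2) For each $j$, consider the slab $\pi^{-1}(I)\cap f^{-1}(J_j)$; it contains a point $p_j\in L\subset\Omega$ and a point $p_j'\in L'\subset\Omega^-$, so $\partial\Omega$ meets this slab. More precisely, one finds inside each slab a sub-configuration to which the lower regularity estimate applies: the slab $\pi^{-1}(I)\cap f^{-1}(J_j)$ is contained in a ball $B_j$ of radius comparable to $t$ (diameter $\leq \sqrt{n}\,t$, say), and it contains points of both $\Omega$ and $\Omega^-$. (3) Use the separation/projection argument: any segment from $p_j$ to $p_j'$ stays in the convex slab and must hit $\Delta(Q,r)\cap B_j$, and projecting onto the horizontal $\RR^{n-1}$ (as in Lemma \ref{lowersurf}) — or more directly invoking lower Ahlfors regularity of $\surf\res\partial\Omega$ from Lemma \ref{lowersurf} around a boundary point in $B_j$ — yields $\surf(\Delta(Q,r)\cap B_j)\geq c\,\beta\, t^{n-1}$ for an explicit dimensional constant $c$ (here $c=4^{-(n-1)}$ after accounting for the cube-to-ball comparison, which is exactly why the constant $\lceil 4^{n-1}\beta^{-1}N\rceil$ appears). (4) The balls $B_j$ (or, after a further cube comparison, the slabs over $I$) are essentially disjoint because the $J_j$ are disjoint in the vertical direction, so summing over $j=1,\dots,m$ gives
\[
\surf\!\big(\Delta(Q,r)\cap\pi^{-1}(I')\big)\;\geq\; \sum_{j=1}^m \surf(\Delta(Q,r)\cap B_j)\;\geq\; m\,4^{-(n-1)}\beta\, t^{n-1}\;\geq\; N\,t^{n-1},
\]
where $I'$ is the smallest cube in $\RR^{n-1}$ containing all the $\pi(B_j)$ — this can be taken to be $I$ itself if one works with slabs over $I$, or a fixed dilate $5I$ if one insists on genuine balls, and either way $x\in I\subset I'$ and $\surf(I')\leq C(n)t^{n-1}$. (5) Hence $H(x)\geq \surf(\Delta(Q,r)\cap\pi^{-1}(I'))/\surf(I')\geq N$ for any $x\in I$ (absorbing $C(n)$ into the choice of the ceiling constant, which is why $4^{n-1}$ rather than a bare constant is used), contradicting $I\notin\Lambda$.

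The main obstacle is bookkeeping of the geometric constants so that exactly $\lceil 4^{n-1}\beta^{-1}N\rceil$ subintervals suffice: one must pin down the comparison between the cube $I$ of side $t$, the slab $\pi^{-1}(I)\cap f^{-1}(J_j)$, the ball $B_j$ one inscribes or circumscribes, and the enlarged cube $I'$ over which the maximal function is evaluated, making sure the loss from each comparison is no worse than a factor $4^{n-1}$ in total and that the chosen reference point for applying Lemma \ref{lowersurf} actually lies on $\partial\Omega$ within $B_j$ at a scale $\gtrsim t$ (which follows since $\Omega$ and $\Omega^-$ both meet $B_j$ and $t\leq \diam\mathcal R< r<R$). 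A secondary point to handle carefully is measurability: the sets $T_E$, $F_k$ need not be measurable, but here we only ever bound $\surf$ of the genuinely Borel set $\Delta(Q,r)\cap\pi^{-1}(I')$ from below, so outer-measure subadditivity over the finitely many disjoint pieces is all that is needed and causes no trouble.
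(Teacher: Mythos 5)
Your strategy is the paper's strategy: convert the long vertical overlap $f(L)\cap f(L')$ into many boundary points lying over $I$ at well-separated heights, apply the lower regularity of Lemma \ref{lowersurf} to a surface ball of radius $t/2$ around each, and sum to push the maximal function $H$ above $N$ on a fixed dilate of $I$. (The paper argues directly, concluding $2I\subset\Lambda$, rather than by contradiction; that is cosmetic. Your measurability remark is also correct and matches the paper.)

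The step that does not close as written is (4), precisely the ``main obstacle'' you flag. Partitioning $J$ into \emph{abutting} subintervals $J_j$ of length $t$ allows the boundary points $y_j$, $y_{j+1}$ from adjacent slabs to have nearly equal heights, so neither the surface balls $\Delta(y_j,t/2)$ nor the circumscribed balls $B_j$ (radius $\approx\sqrt{n}\,t/2$) are disjoint. If you use the $B_j$ with bounded overlap and take $I'$ to be the cube containing all $\pi(B_j)$ (side $\approx\sqrt{n}\,t$, so $\surf(I')\approx n^{(n-1)/2}t^{n-1}$), the combined loss from the overlap constant and the cube-to-ball comparison is a dimensional factor that already exceeds what $\lceil 4^{n-1}\beta^{-1}N\rceil$ buys, even for small $n$; and the slab-confined variant with $I'=I$ fails for a different reason, namely that the ball $\Delta(y_j,t/2)$ furnished by Lemma \ref{lowersurf} need not lie inside the slab, so it does not lower-bound the surface measure \emph{within} the slab. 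The fix is the paper's selection: choose $\lceil 4^{n-1}\beta^{-1}N\rceil$ heights $u_i\in f(L)\cap f(L')$ with $|u_i-u_j|\geq t$ (possible exactly because the overlap has length $\geq\lceil 4^{n-1}\beta^{-1}N\rceil t$). Then the boundary points found on the horizontal segments at those heights satisfy $|y_i-y_j|\geq|f(y_i)-f(y_j)|\geq t$, the balls $\Delta(y_i,t/2)$ are genuinely pairwise disjoint, each has measure $\geq\beta(t/2)^{n-1}$, and all lie in $\Delta(Q,r)\cap\pi^{-1}(2I)$; summing gives $N(2t)^{n-1}=N\surf(2I)$ on the nose. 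The $4^{n-1}$ is exactly $2^{n-1}$ for the radius $t/2$ times $2^{n-1}$ for the dilate $2I$ --- no further constants enter. With that one change your argument is complete.
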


\begin{proof} Select $\lceil 4^{n-1}\beta^{-1}N\rceil$ points $u_i$ in $f(L)\cap f(L')$ such that $|u_i-u_j|\geq t$ for $i\neq j$. Then for each $u_i$ the horizontal line segment in $f^{-1}(u_i)$ which joins $L$ to $L'$ intersects $\partial\Omega$ at some point $y_i$, because $L$ and $L'$ belong to different components of $\mathcal{R}\setminus\partial\Omega$. The balls $\Delta(y_i,t/2)$ are disjoint sets and by Lemma \ref{lowersurf} (note $t\ll 2s<r<R$), \begin{equation}\surf\left(\bigcup_i \Delta(y_i,t/2)\right)\geq (4^{n-1}\beta^{-1} N)\beta \left(\frac{t}{2}\right)^{n-1}= N\surf(2I).\end{equation} Hence, $2I\subset\Lambda$, since $\Delta(y_i,t/2)\subset \Delta(Q,r)\cap\pi^{-1}(2I)$ for each $y_i$ (one easily checks that the distance of $\mathcal{R}$ to $\RR^n\setminus B(Q,r)$ is farther than $t/2$). In particular, $I\subset\Lambda$.
\end{proof}

It will be convenient to work with a ``dyadic" decomposition of $I_0$. We say that a cube $I\subset I_0$ is \emph{admissible} if the cube $\{(1/2,\dots,1/2)+x/s:x\in I\}$ is dyadic in the usual sense. Hence $I_0$ is admissible and every admissible cube $I\subset I_0$ of side length $s/2^i$ is the almost disjoint union of $2^{j(n-1)}$ admissible cubes of side length $s/2^{i+j}$. Moreover, given any cube $I\subset I_0$ of side length $t$, we can find an admissible cube contained in $I$ of side length $\geq t/4$. The following ``search lemma" locates admissible cubes inside $\Lambda\cup F_k^c$.

\begin{lemma}\label{Helper3} There exist constants $C_1=C_1(n,N,\alpha,\beta)$ and $C_2=C_2(n,M,N,\beta)$ with the following property. If $I\subset I_0$ is any cube with side length $t$ satisfying \begin{equation}
\frac{C_1}{h} s_{k} \leq t \leq s_k,\end{equation} then one can find an admissible cube $J\subset I\cap (\Lambda\cup F_k^c)$ of side length $\geq t/C_2$.
\end{lemma}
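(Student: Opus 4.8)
We want to find an admissible cube $J\subset I\cap(\Lambda\cup F_k^c)$ of side length $\gtrsim t$. The strategy is dichotomy: either $I$ already lies mostly in $\Lambda$ (in which case any admissible subcube works, since $\Lambda$ is the set where the maximal function $H$ is large and hence is a union of cubes up to a small error — more precisely, if $I$ meets $\Lambda$ in the sense relevant to the maximal function, a definite-size admissible subcube lies in $\Lambda$), or else $I$ is ``mostly good'', meaning we can pin down an admissible subcube on which no pair $y,z\in T_E$ can have $z\in y+\mathcal C$ with $s_k\le f(z)-f(y)\le s_{k-1}$, which is exactly the condition $J\subset F_k^c$.

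First I would set up the contrapositive inside the ``good'' case: suppose $I\subset I_0$ has side length $t$ with $\tfrac{C_1}{h}s_k\le t\le s_k$, and suppose \emph{no} admissible subcube of side length $\ge t/C_2$ lies in $\Lambda\cup F_k^c$. Partition $I$ into admissible subcubes of side length comparable to $t/C_2$ (losing at most a factor $4$, this is possible by the remark on admissible cubes). Each such subcube $J$ is neither inside $\Lambda$ nor inside $F_k^c$; in particular each meets $F_k$, so for each $J$ there are points $y_J,z_J\in T_E$ with $\pi(y_J)\in J$, $z_J\in y_J+\mathcal C$, and $s_k\le f(z_J)-f(y_J)\le s_{k-1}$. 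The point of the cone condition $z_J\in y_J+\mathcal C$ together with $f(z_J)-f(y_J)\le s_{k-1}=\alpha s_k$ is that $|\pi(z_J)-\pi(y_J)|\le s_{k-1}/h=\alpha s_k/h$; choosing $C_1$ large relative to $\alpha$ forces $\pi(z_J)$ to lie in a controlled neighbourhood of $J$, in fact within $\mathcal R$ once $t\ge \tfrac{C_1}{h}s_k$.

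Next I would build the two line segments required by Lemma~\ref{Helper2}. For $y_J\in T_E$, by the exterior corkscrew condition (applied at an appropriate small scale comparable to $s_k$, or rather using that $T_E$ is the ``top edge'' so that the vertical segment just above $y_J$ penetrates $\Omega^-$) one gets a short vertical segment $L'_J\subset\Omega^-\cap\mathcal R$ with $\pi(L'_J)\ni\pi(y_J)$ reaching up to height close to $f(z_J)$ — here is where the constant $C_2=C_2(n,M,N,\beta)$ enters, through the corkscrew constant $M$ and through how much vertical span one needs. Meanwhile the segment from $y_J$ down into $\Omega$ (or the segment realizing $z_J$ as a point of $\partial\Omega$ on a vertical line, using $z_J\in T_E$ as well) supplies $L_J\subset\Omega\cap\mathcal R$ with overlapping vertical projection. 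By choosing $C_2$ large enough that the common vertical span $f(L_J)\cap f(L'_J)$ has length $\ge\lceil 4^{n-1}\beta^{-1}N\rceil\cdot(\text{side of }J)$ — which is arranged because the span is of order $s_k$ while the side of $J$ is of order $t/C_2\le s_k/C_2$ — Lemma~\ref{Helper2} shows the cube containing $J$ (hence $J$ itself, after a harmless enlargement staying inside $2I_0$) lies in $\Lambda$. But that contradicts $J\not\subset\Lambda$. Hence some admissible $J\subset I\cap(\Lambda\cup F_k^c)$ of side length $\ge t/C_2$ exists.

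\textbf{Main obstacle.} The delicate point is the bookkeeping that makes all three inequalities simultaneously satisfiable: (i) $C_1$ must be large enough that $|\pi(z_J)-\pi(y_J)|\le \alpha s_k/h$ is negligible compared to $t\ge C_1 s_k/h$, so that the witness points and their cones stay trapped in $\mathcal R$ and the geometry of Lemma~\ref{Helper2} applies; (ii) $C_2$ must be large enough that the vertical span $\gtrsim s_k$ dominates $\lceil 4^{n-1}\beta^{-1}N\rceil$ times the subcube side $\lesssim t/C_2\lesssim s_k/C_2$; and (iii) these choices must not depend on $h$ (only $C_1$ may, and only through the ratio in the hypothesis on $t$). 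Getting $C_1$ to depend only on $n,N,\alpha,\beta$ and $C_2$ only on $n,M,N,\beta$ — in particular \emph{not} on $h$ — is the real content; it works precisely because $h$ appears only in the harmless direction (larger $h$ makes cones narrower, only helping trap $\pi(z_J)$), and because the vertical span we exploit is governed by $s_k$ and the corkscrew constant $M$, not by $h$. I would also need to handle the boundary issue that $J$ enlarged to $2J$ might stray outside $2I_0$; this is avoided by working throughout with admissible subcubes well inside $I_0$, exactly as in \cite{DJ}.
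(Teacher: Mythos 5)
There is a genuine gap at the heart of your argument: you never actually produce the exterior segment $L'$ that Lemma \ref{Helper2} requires. That lemma needs two segments, one in $\Omega$ and one in $\Omega^-$, both projecting into a cube of side $t'$, whose vertical spans overlap in a segment of length at least $\lceil 4^{n-1}\beta^{-1}N\rceil\,t'$ --- a \emph{large} multiple of $t'$. But the exterior corkscrew condition applied at scale $\rho$ only yields a ball $B(c,\rho/M)\subset\Omega^-$ with $|c-z|\le\rho$; the exterior vertical segment it provides has length at most $2\rho/M$ while its projection is localized only to within $\rho$ of $\pi(z)$. So the ratio (common span)/(cube side) obtained this way is at most of order $1/M<1$, never the large constant $\lceil 4^{n-1}\beta^{-1}N\rceil$. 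Your claim that ``the span is of order $s_k$ while the side of $J$ is of order $t/C_2$'' is therefore unsupported: to get an exterior segment of vertical extent $\sim s_k$ you would need a corkscrew ball of radius $\sim Ms_k$, whose projection is then nowhere near confined to a cube of side $t/C_2\ll s_k$. (Your parenthetical fallback is also backwards: for $y_J\in T_E$ the open vertical segment above $y_J$ ends in $I_1\subset\Omega$ and misses $\partial\Omega$, so it penetrates $\Omega$, not $\Omega^-$.)

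The paper's proof supplies exactly the missing ingredient via the height function $\lambda(\mathcal{Q})=\max\{f(y):y\in T_E,\ \pi(y)\in\mathcal{Q}\}$ and a three-way alternative over nested cubes $I\supset J_1\supset\cdots\supset J_{m_0}$ of halving size. Either some $J_m$ misses $\pi(T)$ (done); or $\lambda$ drops by $s_k/8$ at every step, producing $m_0$ vertically separated boundary points over $I$ whose disjoint surface balls force $2I\subset\Lambda$ by lower Ahlfors regularity (a branch your partition scheme has no analogue of); or $\lambda(J_m)\le\lambda(J_{m+1})+s_k/8$ for some $m$. In that last case the exterior ball is placed at the near-maximal point $w$, and the decisive observation is that any witness pair $y,z$ with $\pi(y)$ in a tiny cube $K$ around $\pi(c)$ forces $f(z)\le\lambda(\mathcal{Q})\le f(w)+s_k/8$, hence $f(y)\le f(c)-\tfrac34 s_k$: the witness lies far \emph{below} the exterior ball, so the interior segment $L(y)$, which runs all the way up to height $a_n$, contains the entire vertical extent $qt/4M$ of the exterior ball. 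One then chooses $K$ with side $t_K=\lceil 4^{n-1}\beta^{-1}N\rceil^{-1}qt/4M$, i.e.\ much smaller than the ball, so that Lemma \ref{Helper2} applies and $K\subset\Lambda$; if instead no witness projects into $K$, then $K\cap F_k=\emptyset$. Without some device like $\lambda(\mathcal{Q})$ guaranteeing that the witnesses sit well below the exterior ball, your construction cannot make the common vertical span long relative to the cube side, and the contradiction you aim for does not materialize.
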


\begin{proof} Let $I\subset I_0$ with side length $t$, $C_1 h^{-1} s_{k}\leq t\leq s_k$ be given. Since any cube contains an admissible cube of comparable size, we may first search for a cube $J\subset I \cap(\Lambda \cup F_k^c)$  which is not necessarily admissible.

For every cube $\mathcal{Q}\subset\RR^{n-1}$ such that $\pi(T)\cap \mathcal{Q}\neq \emptyset$, define \begin{equation} \label{searcheq1} \lambda(\mathcal{Q})=\max\{f(y):y\in T_E\text{ and }\pi(y)\in \mathcal{Q}\}.\end{equation} The maximum in (\ref{searcheq1}) is realized, because $T_E$ is the ``top edge" of $T$ and $T$ is compact. Suppose there exists a cube $\mathcal{Q}\subset I$ of side length $qt$ such that $\lambda(\mathcal{Q})\leq \lambda\left(\frac12\mathcal{Q}\right)+s_k/8$. Pick $w\in T_E$ such that $\pi(w)\in\frac12 \mathcal{Q}$ and $f(w)=\lambda(\frac12\mathcal{Q})$, and let  $c=A^-(w,qt/8)$ be a non-tangential point of $\Omega^-$. Then $B(c,qt/8M)\subset \Omega^-$. We assign $K\subset\RR^{n-1}$ to be the $(n-1)$-dimensional cube with center $\pi(c)$ and side length $t_K$, \begin{equation}\label{searcheq2} t_K=\lceil 4^{n-1}\beta^{-1}N\rceil^{-1}qt/4M.\end{equation} Note $K\subset \mathcal{Q}\subset I$, since $\pi(c)$ is the center of $K$, $\pi(w)\in \frac12\mathcal{Q}$, $\dist(\frac12\mathcal{Q},\mathcal{Q}^c)=qt/4$ and \begin{equation}\label{searcheq3} \dist(x,\tfrac12\mathcal{Q})\leq \frac{\diam K}{2}+|\pi(c)-\pi(w)|\ll \frac{qt}{16}+\frac{qt}{8}=\frac{3qt}{16} \quad\text{for all }x\in K.\end{equation} If $K\cap F_k=\emptyset$, we are done. Otherwise there exist points $y,z\in T_E$ such that $\pi(y)\in K$, $z\in y+\mathcal{C}$ and $s_k\leq f(z)-f(y)\leq s_{k-1}$. We claim $\pi(z)\in \mathcal{Q}$ if $C_1$ is sufficiently large. Because $z\in y+\mathcal{C}$ and $C_1h^{-1}s_k\leq t$, \begin{equation}\label{searcheq4}|\pi(z)-\pi(y)|\leq h^{-1}(f(z)-f(y))\leq h^{-1}\alpha s_k\leq t\alpha/C_1.\end{equation} Since $\pi(y)\in K$, using (\ref{searcheq3}) it follows that \begin{equation}\label{searcheq5}\dist(\pi(z),\tfrac12\mathcal{Q}) \leq |\pi(z)-\pi(y)|+\dist(\pi(y),\tfrac12\mathcal{Q})\leq t\left(\frac{\alpha}{C_1}+\frac{3q}{16}\right).\end{equation} Hence, $\dist(\pi(z),\frac12\mathcal{Q})\leq qt/4$ and $\pi(z)\in \mathcal{Q}$ provided $C_1\geq 16\alpha/q$. Assume that $C_1$ has been chosen so that this is true. Then
\begin{equation}\label{searcheq6} f(y)\leq f(z)-s_k \leq \lambda(\mathcal{Q})-s_k \leq \lambda(\tfrac12\mathcal{Q})-\frac{7}{8}s_k=f(w)-\frac{7}8s_k \leq f(c)-\frac{3}{4}s_k,\end{equation} where the last inequality holds since $|f(c)-f(w)|\leq |c-w| \leq qt/8\leq s_k/8$. Now consider the line segment $L=L(y)\subset\Omega\cap\mathcal{R}$ and let $L'$ be the vertical line segment inside $B(c,qt/8M)\subset\Omega^-\cap\mathcal{R}$ through $c$ with length $qt/4M$. By (\ref{searcheq6}), $f(L')\subset f(L)$. Hence $\pi(L)=\pi(y)$ and $\pi(L')=\pi(c)$ belong to $K$ and $f(L)\cap f(L')=f(L')$ is a line segment of length $\geq \lceil 4^{n-1}\beta^{-1}N\rceil t_K$. By Lemma \ref{Helper2}, the cube $K\subset\Lambda$. We have proved that if $C_1\geq 16\alpha/q$ and if there exists a cube $\mathcal{Q}\subset I$ of side length $qt$ such that $\lambda(\mathcal{Q})\leq \lambda(\frac12\mathcal{Q})+s_k/8$, then $I\cap(\Lambda\cup F_k^c)$ contains a cube $K$ of side length $t_K=\lceil 4^{n-1}\beta^{-1}N\rceil^{-1}qt/4M$.

To finish the lemma, set $m_0=\lceil 32^{n-1}\beta^{-1}N\rceil$ and consider the sequence of cubes $I\supset J_1\supset \dots\supset J_{m_0}$ with the same center as $I$ but with side lengths $t, t/2, \dots, t/2^{m_0}$. There are three alternatives. First if it happens $J_m\cap \pi(T)=\emptyset$ for some $1\leq m\leq m_0$, then $J_m\cap F_k=\emptyset$ and we set $J=J_m$. Otherwise we know $\lambda(J_m)$ is defined for every $m$. Second suppose that for each $m<m_0$, $\lambda(J_m)\geq\lambda(J_{m+1})+s_k/8$. Then one can find $m_0$ points $y_m\in T$  such that $\pi(y_m)\in J_m\subset I$ but $|y_m-y_{m'}|\geq |f(y_m)-f(y_{m'})|\geq s_k/8\geq t/8$ for each $m\neq m'$. The $m_0$ surface balls $\Delta(y_m,t/16)\subset \Delta(0,r)\cap \pi^{-1}(2I)$ are disjoint; by Lemma \ref{lowersurf}, \begin{equation}\begin{split} \surf(\Delta(0,r)\cap \pi^{-1}(2I))&\geq m_0\beta (t/16)^{n-1}\\ &\geq 32^{n-1}N (t/16)^{n-1}=N\surf(2I).\end{split}\end{equation} Thus, $2I\subset \Lambda$ and we can select $J=I$. Third suppose that $\lambda(J_m)\leq \lambda(J_{m+1})+s_k/8$ for some $1\leq m< m_0$. Put $C_1=2^{m_0+3}\alpha$ (which depends only on $n$, $N$, $\alpha$ and $\beta$) so that  $C_1\geq 16\alpha/q$ for $q=1/2^m$. With $\mathcal{Q}=J_m$ the argument above produces a cube  $K\subset I\cap(\Lambda\cup F_k^c)$ with side length $t_K=\lceil 4^{n-1}\beta^{-1}N\rceil^{-1}qt/4M$ and we can set $J=K$. In the worst scenario (the last case), we found a cube $J\subset I \cap (\Lambda\cup F_k^c)$ of side length  $\geq t/2^{m_0+2n+1}\beta^{-1}NM$. Therefore, since any cube contains an admissible cube at least one-quarter of its own size, we can take $C_2=2^{m_0+2n+3}\beta^{-1}NM$ (which depends only on $n$, $M$, $N$ and $\beta$).
\end{proof}

Next we iterate Lemma \ref{Helper3}. If the slope $h$ of $\mathcal{C}$ is sufficiently large, then $F_k\setminus \Lambda$ is not concentrated in any cube of size $s_k$.

\begin{lemma}\label{Helper4} For all $\delta>0$ there exists $h_1=h_1(n, C_1, C_2, \delta)$ such that \begin{equation} \surf(I\cap F_k\setminus \Lambda) \leq \delta \surf(I)\end{equation} whenever $h\geq h_1$,  $k\geq 0$ and $I\subset I_0$ is an admissible cube of side length $s_k$.
\end{lemma}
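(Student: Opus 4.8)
The plan is to iterate the ``search lemma'' (Lemma \ref{Helper3}) inside the admissible cube $I$, removing a fixed proportion of measure at each stage, so that the leftover set — which must contain $I\cap F_k\setminus\Lambda$ — decays geometrically. Throughout, $\surf$ restricted to $\RR^{n-1}$ is just Lebesgue measure, so for a cube $Q$ we have $\surf(Q)=\ell(Q)^{n-1}$, where $\ell(Q)$ denotes the side length.

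Set $\theta=1-C_2^{-(n-1)}\in(0,1)$. I would prove by induction on $m\geq 0$ the following statement: if $h\geq C_1C_2^{m}$, then $I$ can be written as a finite union of admissible subcubes, each labelled \emph{good} — in which case it is contained in $\Lambda\cup F_k^c$ — or \emph{pending}, with good and pending cubes non-overlapping, every pending cube of side length $\geq s_k/C_2^{m}$, and the pending cubes of total $\surf$-measure at most $\theta^{m}\surf(I)$. The base case $m=0$ is trivial: label $I$ itself pending. For the inductive step, let $P$ be a pending cube at stage $m$; since $s_k/C_2^{m}\leq\ell(P)\leq\ell(I)=s_k$, the hypothesis $h\geq C_1C_2^{m}$ forces $\ell(P)\geq (C_1/h)s_k$, so Lemma \ref{Helper3} applies to $P$ and produces an admissible cube $J_P\subset P\cap(\Lambda\cup F_k^c)$ with $\ell(J_P)\geq\ell(P)/C_2$. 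Relabel $J_P$ as good, and write the dyadic complement $P\setminus J_P$ as its standard finite disjoint union of admissible cubes, each of side length $\geq\ell(J_P)\geq s_k/C_2^{m+1}$; label all of these pending. Doing this for every pending $P$ at stage $m$ retains all previously good cubes, removes the $J_P$ from the pending collection, and since $\surf(J_P)\geq C_2^{-(n-1)}\surf(P)$ the total measure of the pending collection is multiplied by at most $\theta$; thus at stage $m+1$ the pending cubes have side $\geq s_k/C_2^{m+1}$ and total measure $\leq\theta^{m+1}\surf(I)$, completing the induction.

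To conclude, observe that a good cube, being contained in $\Lambda\cup F_k^c$, is disjoint from $F_k\setminus\Lambda$ (because $F_k\cap(\Lambda\cup F_k^c)\subset\Lambda$); hence at stage $m$ the pending cubes cover $I\cap F_k\setminus\Lambda$. Choosing $m=m(n,C_2,\delta)$ so large that $\theta^{m}\leq\delta$, and then setting $h_1=C_1C_2^{m}$ — which depends only on $n$, $C_1$, $C_2$, $\delta$ — we get, for every $h\geq h_1$, that $\surf(I\cap F_k\setminus\Lambda)\leq\theta^{m}\surf(I)\leq\delta\surf(I)$, using only finite subadditivity of the outer measure $\surf$; this also disposes of the fact that $F_k$ is not known to be measurable.

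I expect the only real obstacle to be the bookkeeping that keeps Lemma \ref{Helper3} applicable: its hypothesis requires each pending cube to have side at least $(C_1/h)\,s_k$, and each application may shrink a pending cube by the factor $C_2$, so after $m$ stages a pending cube can be as small as $s_k/C_2^{m}$; this is precisely why the required lower bound $h_1$ must grow (exponentially) with $m$, hence blow up as $\delta\downarrow 0$. Apart from tracking this dependence, the argument is the familiar geometric-decay scheme built on a one-step ``definite hole'' lemma.
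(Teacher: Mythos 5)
Your proposal is correct and follows essentially the same route as the paper: both iterate Lemma \ref{Helper3} to remove a definite fraction of the measure in each round, yielding geometric decay and a threshold $h_1=C_1\cdot(\mathrm{const})^{m}$ with $m\sim\log(1/\delta)$. The only difference is bookkeeping --- you maintain a collection of variable-size pending cubes and bound their total measure by $\theta^m\surf(I)$ with $\theta=1-C_2^{-(n-1)}$, whereas the paper subdivides into equal admissible cubes of side $s_k/P^q$ (with $P$ a power of two at least $C_2$) and counts the bad ones --- and your appeal to finite subadditivity of the outer measure correctly handles the possible non-measurability of $F_k$.
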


\begin{proof}Let us agree that a cube $J\subset I$ is \emph{good} if $J\subset \Lambda\cup F_k^c$; otherwise, we call $J$ \emph{bad}. Let $P$ be the smallest integer power of two that is at least $C_2$. In round one, cover $I$ by $P^{n-1}$ admissible cubes of side length $s_k/P$. If $h$ is sufficiently large, then Lemma \ref{Helper3} applies to $I$ and at least one cube of size $s_k/P$ is good and at most $P^{n-1}-1$ cubes are bad. Round two. Cover each bad cube of size $s_k/P$ by $P^{n-1}$ admissible cubes of size $s_k/P^2$. Applying Lemma \ref{Helper3} to each parent, we conclude the number of bad admissible cubes of size $s_k/P^2$ is at most $(P^{n-1}-1)^2=P^{2(n-1)}(1-P^{-n+1})^2$. Repeating this procedure through round $R$, we conclude the number of bad admissible cubes of size $s_k/P^R$ is at most $P^{R(n-1)}(1-P^{-n+1})^R$. Set $R$ to be the first positive integer such that $(1-P^{-n+1})^R<\delta$.  In order to invoke Lemma \ref{Helper3} for $R$ rounds total, we needed $s_k/P^{q}\geq C_1h^{-1}s_k$ for each $q<R$. Thus, if $h\geq h_1=C_1 P^{R-1}$, then the number of bad admissible cubes $J$ of size $s_k/P^{R}$ is at most $\delta P^{R(n-1)}$. It follows that \begin{equation}\begin{split} \surf(I\cap F_k\setminus \Lambda) &\leq \sum_{J\subset I} \surf(J\cap F_k\setminus\Lambda) \\ &\leq \delta P^{R(n-1)}(s_k/P^{R})^{n-1} = \delta \surf(I)\end{split}\end{equation} as desired.
\end{proof}

For each $k\geq 0$, call $(I_{k,j})_j$ the sequence of all admissible cubes of side length $s_k$ that meet $F_k\setminus \Lambda$. In order to control the sum in (\ref{goal3}), we associate a \emph{piece of surface} $S(I_{k,j})$ to every cube $I_{k,j}$ and then study the size and overlap of the $S(I_{k,j})$:

\begin{figure}[t]
\includegraphics[width=.35\textwidth]{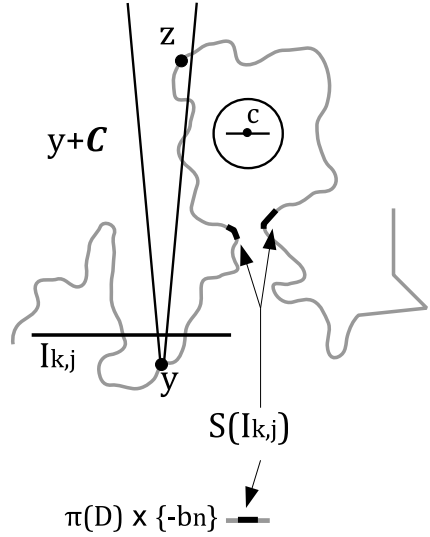}
\caption{A piece of surface $S(I_{k,j})$}
\end{figure}

Fix a large constant $\zeta\gg 1$ to be chosen later (after Lemma \ref{Helper7}). Suppose $I=I_{k,j}$ is an admissible cube of side length $s_k$ that meets $F_k\setminus \Lambda$. Let $y=y(I)$ and $z=z(I)$ be any two points of $T_E$ such that $\pi(y)\in I\cap (F_k\setminus \Lambda)$, such that $z\in y+\mathcal{C}$, and such that $s_k\leq f(z)-f(y)\leq s_{k-1}$. Let $c=c(I)$ be any non-tangential point $c=A^-(z,s_k/\zeta)$ of $z$ in $\Omega^-$. Then $|c-z|\leq s_k/\zeta$ and $B(c,s_k/\zeta M)\subset \Omega^-\cap B(z,2s_k/\zeta)$. Furthermore, $B(c,s_k/\zeta M)\subset\mathcal{R}$ if we select $\zeta \geq 8$ (compare points in the ball to $z\in T_E$). Assign $D=D(I)$ to be the $(n-1)$-dimensional disk with center $c$ and radius $s_k/2\zeta M$ that is parallel to $I_0$. We define $S=S(I)$ to be the set of all points $w$ such that (Fig. 6) \begin{enumerate}
\item $w\in \partial\Omega\cap\mathcal{R}$ or $w\in \pi(D)\times\{-b_n\}$,
\item $\pi(w)\in \pi(D)$,
\item $f(w)\leq f(c)-s_k/\zeta M$, and
\item the open vertical line segment joining $w$ to $\hat w=(\pi(w),f(c)-s_k/\zeta M)$ does not intersect $\partial\Omega$.
\end{enumerate} Including the extra $(n-1)$ disk $\pi(D)\times\{-b_n\}$ in the definition of $S$ ensures that the projection $\pi(S)=\pi(D)$ is also a disk of radius $s_k/2\zeta M$.

\begin{lemma}\label{Helper5} Assume that $h\geq 4\alpha$ and $\zeta\geq 8$. Then $\pi(S(I_{k,j}))\subset 2I_{k,j}$.\end{lemma}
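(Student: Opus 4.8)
The plan is to show $\pi(S(I_{k,j}))\subset 2I_{k,j}$ by bounding how far the center $c=c(I)$ of the disk $D$ can drift horizontally from the cube $I=I_{k,j}$, and then adding in the radius of $D$. Since $\pi(S(I))=\pi(D)$ is the disk of radius $s_k/2\zeta M$ centered at $\pi(c)$, it suffices to prove $\dist(\pi(c),I)$ plus $s_k/2\zeta M$ is at most half the side length of $2I$, i.e.\ at most $s_k/2$, and in fact we will get something much smaller.

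First I would track the horizontal displacements step by step. By definition $\pi(y)\in I$, so $\pi(y)$ is within $\sqrt{n-1}\,s_k$ of the center of $I$ — actually it suffices to note $\pi(y)\in I$ and measure everything relative to $I$. Next, $z\in y+\mathcal{C}$ with $\mathcal{C}$ the cone of slope $h$, and $f(z)-f(y)\leq s_{k-1}=\alpha s_k$, so $|\pi(z)-\pi(y)|\leq h^{-1}(f(z)-f(y))\leq (\alpha/h)s_k$; the hypothesis $h\geq 4\alpha$ makes this at most $s_k/4$. Then $c=A^-(z,s_k/\zeta)$ satisfies $|c-z|\leq s_k/\zeta$, hence $|\pi(c)-\pi(z)|\leq s_k/\zeta\leq s_k/8$ when $\zeta\geq 8$. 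Combining, $|\pi(c)-\pi(y)|\leq s_k/4+s_k/8 = 3s_k/8$. Finally $\pi(S(I))=\pi(D)$ is contained in the ball of radius $s_k/2\zeta M\leq s_k/16$ around $\pi(c)$ (since $\zeta M\geq 8$), so every point of $\pi(S(I))$ is within $3s_k/8+s_k/16$ of $\pi(y)\in I$.

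To conclude, observe $2I_{k,j}$ is the cube concentric with $I_{k,j}$ of side length $2s_k$, so it contains the $(s_k/2)$-neighborhood of $I_{k,j}$ in the sup-norm, which in particular contains the $(s_k/2)$-neighborhood of any point of $I_{k,j}$. Since $3s_k/8+s_k/16 < s_k/2$, every point of $\pi(S(I_{k,j}))$ lies in $2I_{k,j}$, as claimed. I do not expect a serious obstacle here — the only thing to be careful about is bookkeeping the chain of horizontal shifts ($\pi(y)\to\pi(z)\to\pi(c)\to\pi(D)$) and checking each is controlled by the stated hypotheses $h\geq 4\alpha$ and $\zeta\geq 8$ (together with $M>1$), which is exactly why those two inequalities were imposed in the setup preceding the lemma. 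One minor point worth a sentence: the clause ``$w\in\pi(D)\times\{-b_n\}$'' in the definition of $S$ contributes nothing new to $\pi(S)$ since its projection is exactly $\pi(D)$, and condition (2) forces $\pi(w)\in\pi(D)$ for all $w\in S$ anyway, so $\pi(S(I))=\pi(D)$ and the displacement estimate on $\pi(c)$ finishes the argument.
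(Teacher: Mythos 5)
Your proof is correct and follows essentially the same route as the paper: both track the chain of horizontal displacements $\pi(y)\to\pi(z)\to\pi(c)\to\pi(D)$, bounding them by $\alpha s_k/h\leq s_k/4$, $s_k/\zeta\leq s_k/8$, and $s_k/2\zeta M\leq s_k/16$ respectively, and conclude that every point of $\pi(S(I))$ lies within $s_k/2$ of $I$ and hence in $2I$. Your extra remark that the disk $\pi(D)\times\{-b_n\}$ does not enlarge $\pi(S)$ matches the paper's observation that $\pi(S)=\pi(D)$.
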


\begin{proof} Let $I=I_{k,j}$ and write $y$, $z$, $c$, $D$ and $S$ for the data associated to $I$. Then $\pi(y)\in I$ and $|\pi(z)-\pi(y)|\leq h^{-1}(f(z)-f(y))\leq s_{k-1}/h=\alpha s_k/h$. Let $x\in\pi(S)$. Since the disk $\pi(S)=\pi(D)$ of radius $s_k/2\zeta M$ is centered at $\pi(c)$ and $|\pi(c)-\pi(z)|\leq s_k/\zeta$, we get \begin{equation}\label{L3-6eq1}\dist(x,I)\leq |x-\pi(c)| + |\pi(c)-\pi(z)|+|\pi(z)-\pi(y)| \leq \frac{s_k}{2\zeta M}+
\frac{s_k}{\zeta}+\frac{\alpha s_k}{h}. \end{equation} Hence, $\dist(x,I)\leq s_k/2$ and $x\in 2I$, if we require that $\zeta\geq 8$ and $h\geq 4\alpha$.
\end{proof}

Set $S_k=\bigcup_j S(I_{k,j})$ to be the union of pieces of surface $S(I_{k,j})$ associated to cubes of side length $s_k$. If the slope $h$ of $\mathcal{C}$ is sufficiently large, then the measure of $F_k\setminus\Lambda$ is controlled by the measure of $\pi(S_k)$.

\begin{lemma}\label{Helper6} There exists $h_2=h_2(n,M,\gamma,\varepsilon,C_1,C_2,\zeta)$ such that if $h\geq \max(h_2,4\alpha)$, then for each $k\geq 0$, \begin{equation}\label{Helper6a}\surf(F_k\setminus \Lambda)\leq \frac{\varepsilon}{2}\left(\gamma+\frac{1}{M\sqrt{n-1}}\right)^{-1}\surf(\pi(S_k)).
\end{equation}\end{lemma}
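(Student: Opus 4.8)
The inequality to establish compares the (outer) measure of the bad set $F_k \setminus \Lambda$ at scale $s_k$ with the measure of the projected surface pieces $\pi(S_k)$. Since $(I_{k,j})_j$ is the collection of admissible cubes of side $s_k$ meeting $F_k \setminus \Lambda$, subadditivity gives
\begin{equation*}
\surf(F_k \setminus \Lambda) \leq \sum_j \surf(I_{k,j} \cap F_k \setminus \Lambda).
\end{equation*}
The strategy is to bound each term $\surf(I_{k,j} \cap F_k \setminus \Lambda)$ by a small multiple of $\surf(\pi(S(I_{k,j})))$, and then to control the overlap of the disks $\pi(S(I_{k,j}))$ so that summing in $j$ loses only a bounded-overlap constant. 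First I would use Lemma \ref{Helper4}: taking $\delta$ small (to be fixed at the very end in terms of $\varepsilon$, $\gamma$, $M$, $n$ and the overlap constant), there is $h_1$ so that for $h \geq h_1$ we have $\surf(I_{k,j} \cap F_k \setminus \Lambda) \leq \delta\, \surf(I_{k,j})$. Meanwhile each $\pi(S(I_{k,j}))$ is a disk of radius $s_k/2\zeta M$, so $\surf(\pi(S(I_{k,j}))) = \omega_{n-1}(s_k/2\zeta M)^{n-1}$, a fixed fraction of $\surf(I_{k,j}) = s_k^{n-1}$. Thus $\surf(I_{k,j} \cap F_k \setminus \Lambda) \leq \delta C(n,M,\zeta)\, \surf(\pi(S(I_{k,j})))$ termwise.

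The main obstacle is the overlap: I must show $\sum_j \surf(\pi(S(I_{k,j}))) \lesssim \surf(\pi(S_k))$ with a constant depending only on $n$ (or $n,M,\zeta$), i.e.\ that the disks $\pi(S(I_{k,j}))$ have bounded overlap. By Lemma \ref{Helper5}, $\pi(S(I_{k,j})) \subset 2I_{k,j}$, and the admissible cubes $I_{k,j}$ of a fixed side length $s_k$ are pairwise almost disjoint, so their doubles $2I_{k,j}$ have overlap bounded by $2^{n-1}$ (or a similar purely dimensional constant: a point lies in $2I$ for at most $C(n)$ admissible cubes $I$ of a fixed generation). Hence $\sum_j \mathbf{1}_{\pi(S(I_{k,j}))} \leq \sum_j \mathbf{1}_{2I_{k,j}} \leq C(n)$, which integrated gives $\sum_j \surf(\pi(S(I_{k,j}))) \leq C(n)\, \surf\big(\bigcup_j \pi(S(I_{k,j}))\big) = C(n)\,\surf(\pi(S_k))$.

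Combining the two bounds yields
\begin{equation*}
\surf(F_k \setminus \Lambda) \leq \delta\, C(n,M,\zeta)\, C(n)\, \surf(\pi(S_k)),
\end{equation*}
and it remains only to choose $\delta$ small enough that $\delta\, C(n,M,\zeta)\, C(n) \leq \tfrac{\varepsilon}{2}\big(\gamma + \tfrac{1}{M\sqrt{n-1}}\big)^{-1}$; this fixes $\delta$ in terms of $n$, $M$, $\gamma$, $\varepsilon$, $\zeta$ (note $\zeta$ will ultimately depend only on $n,M,\gamma,\varepsilon$ once chosen after Lemma \ref{Helper7}, and $C_1,C_2$ are already functions of $n,M,N,\alpha,\beta$, hence of $n,M,\gamma,\varepsilon$), and then Lemma \ref{Helper4} supplies the corresponding threshold $h_1$; taking $h \geq \max(h_1, 4\alpha)$ and setting $h_2 = h_1$ finishes the proof. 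I should be careful with the measurability caveat in Remark 3.4 — only countable subadditivity of the outer measure $\surf$ is used, which is exactly what the above argument needs — and with the precise form of the bounded-overlap constant for doubled admissible cubes, but both are routine.
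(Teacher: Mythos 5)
Your proposal is correct and follows essentially the same route as the paper: apply Lemma \ref{Helper4} with a suitably small $\delta$ to each admissible cube meeting $F_k\setminus\Lambda$, use that each $\pi(S(I_{k,j}))$ is a disk of radius $s_k/2\zeta M$ contained in $2I_{k,j}$ (Lemma \ref{Helper5}), and exploit the bounded overlap of the doubled cubes $(2I_{k,j})_j$ (the constant is $3^{n-1}$, as you anticipated) to convert the sum over cubes into $\surf(\pi(S_k))$. The paper phrases this by counting the number $p$ of such cubes rather than summing termwise, but the argument and the resulting dependence of $h_2$ on $n$, $C_1$, $C_2$, and $\delta=\delta(n,M,\gamma,\varepsilon,\zeta)$ are the same.
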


\begin{proof} Let $h_2=h_1(n,C_1,C_2,\delta)$ from Lemma \ref{Helper4} where \begin{equation}
\delta=\frac{\varepsilon}{2}\left(\gamma+\frac{1}{M\sqrt{n-1}}\right)^{-1}\times \frac{\omega_{n-1}}{(6\zeta M)^{n-1}}.\end{equation} If $p$ is the number of admissible cubes of size $s_k$ that meet $F_k\setminus \Lambda$, then \begin{equation}\surf(F_k\setminus \Lambda) \leq \sum_j \surf(I_{k,j}\cap F_k\setminus \Lambda) \leq p\delta s_k^{n-1}.\end{equation} Fix a cube $I=I_{k,j}$. By Lemma \ref{Helper5}, $\pi(S(I))=\pi(D(I))$ is a disk of radius $s_k/2\zeta M$ that is contained in $2I$. Hence $\surf(\pi(S_k)\cap 2I) \geq\omega_{n-1}(s_k/2\zeta M)^{n-1}$. Because the cubes $(2I_{k,j})_j$ have bounded overlap (each $x\in\RR^{n-1}$  lies in at most $3^{n-1}$ cubes), \begin{equation}p\omega_{n-1}(s_k/2\zeta M)^{n-1}\leq\sum_{j}\surf(\pi(S_k)\cap 2I_{k,j})\leq 3^{n-1}\surf(\pi(S_k)).\end{equation} (This step uses the fact that the sets $\pi(S_k)$ and $\pi(S_k)\cap 2I_{k,j}$ are measurable.) Thus, $\surf(F_k\setminus \Lambda) \leq \delta(6\zeta M)^{n-1}\surf(\pi(S_{k}))/\omega_{n-1}$, explaining our choice of $\delta$.\end{proof}

By choosing good parameters, we can make the pieces of surface $S_0, S_1, S_2, \dots$ disjoint!

\begin{lemma}\label{Helper7} We can find $\alpha\geq 4M\sqrt{n-1}$ depending only on $n$ and $M$, and find $\zeta\geq 8$ depending only on $n$, $N$ and $\beta$ such that $\overline{S_k}\cap \overline{S_{k'}}=\emptyset$ for all $k\neq k'$ whenever $h\geq \alpha\zeta$.\end{lemma}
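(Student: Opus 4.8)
\emph{Approach.} I would argue by contradiction. Suppose that for some $k<k'$ there are cubes $I=I_{k,j}$ and $I'=I_{k',j'}$ with $\overline{S(I)}\cap\overline{S(I')}\neq\emptyset$, fix a point $w$ in this intersection, and write $y,z,c,D$ (resp.\ $y',z',c',D'$) for the data attached to $I$ (resp.\ $I'$) in the construction of the pieces of surface. Only finitely many cubes of each scale meet a fixed ball, and each $S(I)$ agrees with a fixed closed set up to an arbitrarily small perturbation, so it suffices to rule out $w\in S(I)\cap S(I')$ while keeping a little slack in all inequalities to absorb the closures. The first step is \emph{horizontal clustering}: from $z\in y+\mathcal{C}_h$ we get $|\pi(z)-\pi(y)|\le h^{-1}(f(z)-f(y))\le\alpha s_k/h\le s_k/\zeta$ (using $h\ge\alpha\zeta$), and combining this with $|\pi(c)-\pi(z)|\le s_k/\zeta$ and $\pi(y)\in I$ (side $s_k$) puts $\pi(y),\pi(z),\pi(c)$ and the whole disk $\pi(D)$ in a horizontal ball of radius $O(s_k/\zeta)$ about $\pi(y)$; the same holds at the primed scale with $s_{k'}$. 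Since $\pi(w)\in\pi(D)\cap\pi(D')$, this forces $|\pi(z)-\pi(z')|=O(s_k/\zeta)$, which by choosing $\zeta$ large is as small a fraction of $s_k$ as we like.

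\emph{Step 2: vertical clustering, via the absence of cliffs.} This is where the exterior corkscrew condition and the lower regularity of $\surf\res\partial\Omega$ re-enter. The point $z'\in T_E$ carries the exterior ball $B(c',s_{k'}/\zeta M)\subset\Omega^-$ beside it, while on the other side $z'$ is approached by $\Omega$ (or by $\Omega^-$ from above, in which case one uses the interior corkscrew point just below $z'$); so near $z'$ there are an interior segment in $\Omega\cap\mathcal{R}$ and an exterior segment in $\Omega^-\cap\mathcal{R}$ whose heights together span the gap between $f(z')$ and $f(z)$ if these differ. Were $|f(z)-f(z')|$ large compared with $\lceil 4^{n-1}\beta^{-1}N\rceil$ times the horizontal separation from Step 1, then over a small cube centered near $\pi(z')$ the two segments would have vertical overlap exceeding the threshold of Lemma \ref{Helper2}, forcing that cube into $\Lambda$; but the cube can be chosen to contain $\pi(y')$, which lies within $\alpha s_{k'-1}/h$ of $\pi(z')$ and belongs to $F_{k'}\setminus\Lambda$ --- a contradiction. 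Hence $|f(z)-f(z')|=O(\beta^{-1}N\,s_k/\zeta)$, and taking $\zeta$ large in terms of $n$, $N$, $\beta$ makes $z$ and $z'$ close in \emph{both} coordinates, with ample room relative to $s_k/\zeta M$ and $s_{k-1}/h$.

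\emph{Step 3: the contradiction.} Now $f(c)$ and $f(c')$ differ by only $O(s_k/\zeta)$, so over $\pi(w)$ the two exterior balls sit at essentially the same height, the finer one of radius $s_{k'}/\zeta M\le s_k/(\alpha\zeta M)$ swallowed by the coarser one of radius $s_k/\zeta M$. On one hand $w\in S(I)$ asserts that the open vertical segment from $w$ to $\hat w=(\pi(w),f(c)-s_k/\zeta M)$ misses $\partial\Omega$; on the other hand, since $w$ lies strictly below $f(c')-s_{k'}/\zeta M$ while $B(c',s_{k'}/\zeta M)\subset\Omega^-$ lies, over $\pi(w)$, at heights clustered around $f(c')$ --- which, once $\alpha$ exceeds a fixed multiple of $4M\sqrt{n-1}$ so that $s_{k'}/\zeta M$ and the Step~2 offsets are negligible against $s_k/\zeta M$, falls strictly below the level $f(c)-s_k/\zeta M$ --- the vertical line through $\pi(w)$ must cross $\partial\Omega$ strictly between $f(w)$ and $f(c)-s_k/\zeta M$ (it leaves the boundary-free region adjacent to $w$ before reaching the exterior region $B(c',s_{k'}/\zeta M)$). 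This contradicts $w\in S(I)$. Thus no such $w$ exists, and with $\alpha$ a sufficiently large multiple of $4M\sqrt{n-1}$ (depending only on $n,M$) and $\zeta\ge 8$ sufficiently large in terms of $n,N,\beta$, the sets $\overline{S_k}$ are pairwise disjoint whenever $h\ge\alpha\zeta$.

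\emph{Main obstacle.} The delicate point is Step 3: one must show that the finer-scale exterior ball genuinely lies below the coarser-scale ``visibility ceiling'' $f(c)-s_k/\zeta M$, and hence inside the segment that $w\in S(I)$ certifies to be boundary-free, even though the two columns are only approximately aligned (errors $O(s_k/\zeta)$ horizontally and vertically) and one does not know a priori whether $\Omega$ or $\Omega^-$ sits directly above $w$. Carrying this through amounts to tracking the vertical footprint of $B(c',s_{k'}/\zeta M)$ over the off-center point $\pi(w)$ and checking that the numerology closes with precisely the stated dependence of $\alpha$ on $n,M$ and of $\zeta$ on $n,N,\beta$; Step 2 is a close second, since it is the place where one must convert ``$z$ and $z'$ are horizontally close'' into ``they are vertically close'' using $\Lambda$ and the lower regularity of surface measure.
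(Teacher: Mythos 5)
Your Step 3 is where the argument breaks. From the fact that the open vertical segment $L_w$ from $w$ up to $\hat w$ avoids $\partial\Omega$ yet meets an exterior corkscrew ball, you conclude that the segment ``must cross $\partial\Omega$.'' It need not: condition (4) in the definition of $S(I)$ only asserts that $L_w$ misses $\partial\Omega$, not that $L_w\subset\Omega$. A connected segment that avoids $\partial\Omega$ and meets $\Omega^-$ simply lies entirely in $\Omega^-$, which is perfectly compatible with $w\in S(I)$ (typically $w$ is a boundary point with the exterior directly above it), so there is no contradiction to be had with the definition of $S$ alone. The phrase ``leaves the boundary-free region adjacent to $w$'' tacitly assumes the segment starts out in $\Omega$, which is never established. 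The paper turns this observation around and uses it constructively: having shown $L_w\subset\Omega^-$, it pairs $L_w$ with the \emph{interior} vertical segment $L(y)$ over the point of $T_E$ attached to the \emph{finer} cube (which lies in $\Omega$ because that point is on the top edge), checks that the two segments have vertical overlap of length $\geq 3s_k/4$ (fine scale) and horizontal separation $\lesssim s_k/\zeta$, and invokes Lemma \ref{Helper2} to force the finer cube's marked projection into $\Lambda$ --- contradicting the fact that it was chosen in $F_k\setminus\Lambda$. That interior segment, and the final appeal to Lemma \ref{Helper2}, are the missing ingredients.

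There is also a problem with which segment you thread through which ball. In your labeling ($I$ coarse, $I'$ fine) you run the \emph{coarse} segment $L_w$ through the \emph{fine} exterior ball $B(c',s_{k'}/\zeta M)$; for that you need $f(c')$ to lie strictly below $f(c)-s_k/\zeta M$, but Step 2 only gives $|f(c)-f(c')|=O(\beta^{-1}N s_k/\zeta)$ with no control of sign --- since $c$ and $c'$ sit at comparable heights near $z\approx z'$, one expects $f(c')$ to lie \emph{above} that level, and enlarging $\alpha$ does not help because the Step 2 errors scale with $s_k$, not $s_{k'}$. The paper does the opposite pairing: the \emph{fine} segment is threaded through the \emph{coarse} exterior ball, whose radius $s_{k'}/\zeta M\geq\alpha s_k/\zeta M$ is large enough (once $\alpha\geq 12M$) to absorb all the horizontal and vertical offsets; the only height information needed is the pair of one-sided inequalities $f(w)<f(c')<f(\hat w)$ and $f(y)>f(c')$, which follow directly from the definitions of $S$, $S'$ and $|w-w'|\leq s_{k'}/4\zeta M$. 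In particular no ``vertical clustering of $z$ and $z'$'' is required, so your Step 2 is both an unnecessary detour and, as deployed in Step 3, insufficient.
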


\begin{proof} To start assume that $h\geq 4\alpha$ and $\zeta\geq 8$. Then the pieces of surface $(S(I_{k,j}))_j$ have finite overlap by Lemma \ref{Helper5}. Hence $\overline{S_k} = \bigcup_j\overline{S(I_{k,j})}$ for each $k$.

Let cubes $I=I_{k,j}$ and $I'=I_{k',j'}$ be given. We shall write $y$, $z$, $c$, $D$ and $S$ for the data associated to $I$ and write $y'$, $z'$, $c'$, $D'$ and $S'$ for the data associated to $I'$. Suppose to get a contradiction that $k\geq k'+1$ and $\overline{S}\cap \overline{S'}\neq\emptyset$. Then there exist $w\in S$ and $w'\in S'$ such that \begin{equation} |w-w'| \leq \frac{s_{k'}}{4\zeta M}.\end{equation} Let $L_w$ denote the vertical line segment joining $w$ to $\hat w=(\pi(w),f(c)-s_k/\zeta M)$, and write $L_y=L(y)\subset\Omega$ for the vertical line segment over $y$ in $\mathcal{T}$. Also set $B'=B(c',s_{k'}/\zeta M)\subset\Omega^-$.

Our first claim is $\pi(y)\in \pi(B')$ and $f(y)>f(c')$ for certain choices $h$, $\alpha$ and $\zeta$. Indeed, since $\pi(w)\in \pi(S)$, \begin{equation}\begin{split}\label{L3-6a}|\pi(w)-\pi(y)| &\leq |\pi(w)-\pi(c)|+|\pi(c)-\pi(z)|+|\pi(z)-\pi(y)|\\
&\leq \frac{s_k}{2\zeta M}+\frac{s_k}{\zeta}+\frac{\alpha s_k}{h}.\end{split}\end{equation} If we select $h\geq \alpha\zeta$, then (\ref{L3-6a}) implies that $|\pi(w)-\pi(y)|\leq 3s_k/\zeta$. Hence, \begin{equation}\begin{split} |\pi(y)-\pi(c')| &\leq |\pi(y)-\pi(w)|+|\pi(w)-\pi(w')|+|\pi(w')-\pi(c')|\\ &\leq \frac{3s_k}\zeta + \frac{s_{k'}}{4\zeta M} + \frac{s_{k'}}{2\zeta M} \leq \frac{3s_{k'}}{\alpha\zeta} + \frac{s_{k'}}{4\zeta M} + \frac{s_{k'}}{2\zeta M},\end{split}\end{equation} where $s_k\leq s_{k'}/\alpha$ since $k\geq k'+1$. Thus, $|\pi(y)-\pi(c')|\leq s_{k'}/\zeta M$ and $\pi(y)\in \pi(B')$, provided $\alpha \geq 12M$. If $f(y)\leq f(c')$, then $\pi(y)\in \pi(B')$ implies that $L_y\subset\Omega$ intersects $B'\cap f^{-1}(c')\subset \Omega^-$, which is absurd. Therefore, $f(y)>f(c')$, as claimed.

Next we claim $L_w\subset \Omega^-$. On one hand the upper endpoint $\hat w$ of $L_w$ satisfies \begin{equation}f(\hat w)\geq f(z)-\frac{2s_k}{\zeta}\geq f(z)-\frac{s_k}{4}
\geq f(y)+\frac{3s_k}{4}>f(c')\end{equation} since $\zeta\geq 8$. On the other hand the lower endpoint $w$ of $L_w$ satisfies, \begin{equation}f(w)\leq f(w')+\frac{s_{k'}}{4\zeta M}\leq f(c')-\frac{3s_{k'}}{4\zeta M} < f(c').\end{equation} Thus, since $\pi(w)\in\pi(B')$ (in fact $|\pi(w)-\pi(c')|\leq (3/4)s_{k'}/{\zeta M}$), the line segment $L_w$ intersects $B'\cap f^{-1}(c')\subset \Omega^-$. But $L_w$ does not intersect $\partial\Omega$ (by definition of $S$), so $L_w\subset\Omega^-$.

Finally, since $f(\hat w)\geq f(y)+3s_k/4$ and $f(y)>f(c')>f(w)$, we know  the interval $f(L_w)\cap f(L_y)$ has length at least $3s_k/4$. Previously we showed $|\pi(w)-\pi(y)|\leq 3s_k/\zeta$ if $h\geq \alpha\zeta$. Thus, $L_w$ and $L_y$ lie over a cube $J$ of side length $\lceil 4^{n-1} \beta^{-1} N\rceil^{-1} 3s_k/4$ if  $\zeta\geq 4\lceil 4^{n-1} \beta^{-1} N\rceil$. By Lemma \ref{Helper2}, $\pi(y)=\pi(L_y)\in J\subset\Lambda$. This contradicts the fact $\pi(y)\not\in \Lambda$ (by the definition of $y$). Examining conditions on the parameters assumed above reveals  the lemma holds with $\zeta=4\lceil 4^{n-1}\beta^{-1} N\rceil$,
$\alpha =\max(4M\sqrt{n-1},12M)$ and $h\geq \alpha\zeta$.
\end{proof}

We are ready to conclude. Use Lemma \ref{Helper7} to pick the constants $\alpha$ and $\zeta$, and set $h=\max(h_0,h_2,\alpha\zeta)$. Then the pieces of surface $\overline{S_k}\subset \mathcal{S}:=\Delta(Q,r)\cup (2I_0\times\{-b_n\})$ are disjoint and measurable. Thus, since $\surf(\Delta(Q,r))\leq\gamma r^{n-1}$, \begin{equation}\sum_{k=0}^{\infty}\surf(\pi(S_k))\leq \sum_{k=0}^\infty \surf\left(\overline{S_k}\right)\leq\surf(\mathcal{S})\leq \left(\gamma+\frac{1}{M\sqrt{n-1}}\right)r^{n-1}.\end{equation} Using Lemma \ref{Helper6}, we conclude
\begin{equation}\sum_{k=0}^\infty \surf(F_k\setminus \Lambda)\leq \frac{\varepsilon}{2}\left(\gamma+\frac{1}{M\sqrt{n-1}}\right)^{-1} \sum_{k=0}^\infty \surf(\pi(S_k))\leq \frac{\varepsilon}{2}r^{n-1}.\end{equation} Therefore,  (\ref{goal3}) holds and Proposition \ref{PropG} is established.

\section{Harmonic Measure on NTA Domains}
\label{section4}

In this section we prove Theorem \ref{macthm} on the absolute continuity of harmonic measure. An NTA domain is a corkscrew domain (studied in \S\ref{section2} above) that also admits a Harnack chain condition. The class of NTA domains was introduced by Jerison and Kenig \cite{JK}. Given $X_1,X_2\in\Omega$ a \emph{Harnack chain} from $X_1$ to $X_2$ is a sequence of open balls in $\Omega$ such that the first ball contains $X_1$, the last ball contains $X_2$, and consecutive balls intersect.

\begin{definition}A connected open set $\Omega\subset\RR^n$ satisfies the \emph{Harnack chain condition} with constants $M>1$ and $R>0$ if for every $Q\in\partial\Omega$ and $0<r<R$ when a pair of points $X_1,X_2\in \Omega\cap B(Q,r)$ satisfy \begin{equation} \min_{j=1,2}\dist(X_j,\partial\Omega)>\varepsilon\quad\text{and}\quad|X_1-X_2|< 2^k\varepsilon\end{equation} then there exists a Harnack chain from $X_1$ to $X_2$ of length $Mk$ such that the diameter of each ball is bounded below by $M^{-1}\min_{j=1,2}\dist(X_j,\partial\Omega)$.\end{definition}

\begin{definition}\label{NTA} A domain $\Omega\subset\RR^n$ is \emph{non-tangentially accessible} or \emph{NTA} if there exist $M>1$ and $R>0$ such that (i) $\Omega$ satisfies the corkscrew and Harnack chain conditions, (ii) $\RR^n\setminus\Omega$ satisfies the corkscrew condition.\end{definition}

The exterior corkscrew condition guarantees an NTA domain $\Omega\subset\RR^n$ is regular for the Dirichlet problem; i.e.\ for every $f\in C_c(\partial\Omega)$ there exists $u\in C^2(\Omega)\cap C(\overline{\Omega})$ such that $\Delta u=0$ in $\Omega$ and $u=f$ on $\partial\Omega$. Together the maximum principle and Riesz representation theorem yield a family of Borel regular probability measures $\{\omega^X\}_{X\in\Omega}$ on $\partial\Omega$ such that \begin{equation}u(X)=\int_{\partial\Omega} f(Q)d\omega^X(Q)\end{equation} is the unique harmonic extension of $f\in C_c(\partial\Omega)$. We call $\omega^X$ the \emph{harmonic measure} of $\Omega$ with \emph{pole} at $X$. Because $\omega^{X_1}\ll \omega^{X_2}\ll\omega^{X_1}$ for any $X_1,X_2\in\Omega$ (by Harnack's inequality), it makes sense to discuss null sets of harmonic measure $\omega=\omega^{X_0}$ with respect to some fixed pole $X_0\in\Omega$ far away from the boundary.

The special feature of harmonic measure on NTA domains (versus corkscrew domains) that we need below is the the following localization property.

\begin{lemma}[\cite{JK} (4.18)] \label{localize} There exists $C=C(n,M)>0$ with the following property. Let $\Omega\subset\RR^n$ be NTA with constants $M>1$ and $R>0$.  Assume the pole of harmonic measure $\omega=\omega^{X_0}$ satisfies $X_0\in \Omega\setminus B(Q,2r)$ for some $Q\in \partial\Omega$ and $r<R/2$. Then for every non-tangential point $a=A^+(Q,r)$ and every Borel set $E\subset\Delta(Q,r)$, \begin{equation} C^{-1} \omega^{a}(E) \leq \frac{\omega(E)}{\omega(\Delta(Q,r))} \leq C \omega^{a}(E).\end{equation}\end{lemma}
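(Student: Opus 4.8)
The plan is to read the estimate as a localized boundary Harnack (comparison) inequality and to assemble it from the standard harmonic-measure toolbox for NTA domains of \cite{JK}:
(a) the corkscrew lower bound $\omega^{A^+(P,\rho)}(\Delta(P,\rho))\ge c(n,M)>0$;
(b) the doubling property $\omega^{X}(\Delta(P,2\rho))\le C(n,M)\,\omega^{X}(\Delta(P,\rho))$, valid once $X\notin B(P,C_0\rho)$ for a suitable $C_0=C_0(n,M)$;
(c) for the Green function $G=G_\Omega$ of $\Omega$ and any $X,X'\in\Omega\setminus B(P,2\rho)$, the comparison $\omega^{X}(\Delta(P,\rho))/\omega^{X'}(\Delta(P,\rho))\asymp G(X,A^+(P,\rho))/G(X',A^+(P,\rho))$ (throughout, $\asymp$ means comparable up to a constant depending only on $n$ and $M$), which follows from the maximum principle because both $Y\mapsto\omega^{Y}(\Delta(P,\rho))$ and $Y\mapsto G(Y,A^+(P,\rho))$ are positive harmonic in $\Omega\setminus B(P,2\rho)$, vanish continuously on $\partial\Omega\setminus B(P,2\rho)$, and are mutually comparable on $\Omega\cap\partial B(P,2\rho)$; and
(d) the comparison principle: if $u,v>0$ are harmonic in $\Omega\cap B(P,2\rho)$ and vanish continuously on $\Delta(P,2\rho)$, then $u(Y)/v(Y)\asymp u(A^+(P,\rho))/v(A^+(P,\rho))$ for all $Y\in\Omega\cap B(P,\rho)$.

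First I would reduce matters to a pointwise statement about the density $k=d\omega^{X_0}/d\omega^{a}$. This density exists $\omega^{a}$-a.e.\ because $\omega^{X_0}\ll\omega^{a}\ll\omega^{X_0}$ by Harnack's inequality; by (b), applied with $X=a$ (using $\dist(a,\partial\Omega)>r/M$, so $a\notin B(P,C_0\rho)$ for every $P\in\partial\Omega$ once $\rho$ is small), the measure $\omega^{a}$ is doubling at small scales, so the Lebesgue differentiation theorem gives $k(P)=\lim_{\rho\downarrow0}\omega^{X_0}(\Delta(P,\rho))/\omega^{a}(\Delta(P,\rho))$ for $\omega^{a}$-a.e.\ $P$. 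Since $c(n,M)\le\omega^{a}(\Delta(Q,r))\le1$ by (a), the lemma reduces — after writing $\omega^{X_0}(E)=\int_E k\,d\omega^{a}$ — to the claim that $k$ is comparable to a single number $\Theta$ on $\Delta(Q,r)$, off an $\omega^{a}$-null set. (Integrating that claim over all of $\Delta(Q,r)$ forces $\Theta\asymp\omega^{X_0}(\Delta(Q,r))$, and then integrating over $E\subset\Delta(Q,r)$ yields $\omega^{X_0}(E)\asymp\omega^{X_0}(\Delta(Q,r))\,\omega^{a}(E)$, which is the assertion.) To prove the claim, fix $P\in\Delta(Q,r)$ and $0<\rho<r/2M$; then $X_0\notin B(P,2\rho)$ (as $|X_0-P|\ge 2r-r=r$) and $a\notin B(P,2\rho)$ (as $|a-P|\ge\dist(a,\partial\Omega)>r/M$), so by (c),
\[
 \frac{\omega^{X_0}(\Delta(P,\rho))}{\omega^{a}(\Delta(P,\rho))}\ \asymp\ \frac{G\bigl(X_0,A^+(P,\rho)\bigr)}{G\bigl(a,A^+(P,\rho)\bigr)}.
\]
The two Green functions $G(X_0,\cdot)$ and $G(a,\cdot)$ are positive and harmonic in $\bigl(\Omega\cap B(Q,2r)\bigr)\setminus\overline{B(a,r/4M)}$ — their poles $X_0\notin B(Q,2r)$ and $a$ both lie outside — and vanish continuously on $\partial\Omega\cap B(Q,2r)$, while the point $A^+(P,\rho)$ lies within $\rho<r/2M$ of $\partial\Omega\cap\overline{B(Q,r)}$. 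Covering a neighborhood of $\partial\Omega\cap\overline{B(Q,r)}$ by boundary balls $B(\xi,c_0 r)$ of a fixed small radius $c_0=c_0(n,M)$, applying (d) in each one to the pair $\bigl(G(X_0,\cdot),G(a,\cdot)\bigr)$, and transferring the resulting comparison between neighboring balls along the short interior Harnack chains supplied by the NTA structure, I would conclude that $G(X_0,\cdot)/G(a,\cdot)\asymp\Theta$ throughout, for a single number $\Theta=\Theta(Q,r,X_0)$. Hence the left side of the display is $\asymp\Theta$ for every admissible $\rho$, and letting $\rho\downarrow0$ proves the claim.

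The step that requires care — and the one place the full NTA hypothesis (Harnack chains together with \emph{both} corkscrew conditions) is genuinely needed — is the last one: upgrading the purely local comparison principle (d) to a comparison of $G(X_0,\cdot)$ and $G(a,\cdot)$, with constants depending only on $n$ and $M$, across the entire region above $\Delta(Q,r)$, while keeping every Harnack chain inside $B(Q,2r)$ (so that it avoids the pole $X_0$, about which one knows only $X_0\notin B(Q,2r)$) and away from $\overline{B(a,r/4M)}$. This localized iteration of the comparison principle is precisely the content of the relevant chain of lemmas in \cite{JK}, which I would invoke (or reproduce) to finish.
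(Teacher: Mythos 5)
The paper does not actually prove this lemma: it is quoted verbatim from Jerison--Kenig \cite{JK} (their (4.18)), and your sketch is a faithful reconstruction of the argument given there --- the Green-function representation of harmonic measure of surface balls, the boundary comparison theorem, Harnack chaining, and differentiation of $d\omega^{X_0}/d\omega^{a}$ --- with the one genuinely nontrivial step (globalizing the local comparison of $G(X_0,\cdot)$ and $G(a,\cdot)$ over $\Delta(Q,r)$) correctly identified and delegated back to \cite{JK}. The only point worth adding is the one the paper records in Remark \ref{NTAremark}: for unbounded NTA domains the maximum-principle step in your item (c) needs a word about behavior at infinity, but this carries through without modification.
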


\begin{remark}\label{NTAremark} In Definition \ref{NTA} we allow an NTA domain $\Omega\subset\RR^n$ to be either bounded or unbounded. The proof of Lemma \ref{localize} for bounded domains in \cite{JK} carries through to the unbounded case without modification; c.f. \cite{KT}.
\end{remark}

At every boundary point of finite lower density there is a shrinking sequence of scales on which the harmonic measure and the surface measure are comparable in the sense of (\ref{Lsceq}). The proof of Proposition \ref{shrinking} below follows the same structure of David and Jerison's proof of Theorem 2 in \cite{DJ}; however, we keep careful track of the constants appearing from Lipschitz approximations of the domain (Theorem \ref{approxthm} and Lemma \ref{etalemma}). The required technical tools are the localization principle for harmonic measure (Lemma \ref{localize}) and the maximum principle for harmonic functions.

\begin{proposition}\label{shrinking} There exist constants $0<\delta<1$ and $0<\varepsilon<1$ depending only on $n$, $M$ and $\gamma$ with the following property. Let $\Omega\subset\RR^n$ be NTA with constants $M>1$ and $R>0$. If $\liminf_{r\downarrow 0} \surf(\Delta(Q,r))/r^{n-1}<\gamma<\infty$, then there is a sequence of numbers $0<r_i<R$ such that $\lim_{i\rightarrow\infty} r_i=0$ and for every Borel set $E\subset\Delta(Q,r_i)$: \begin{align}
\label{ainfty1}\omega(E)\leq \delta\omega(\Delta(Q,r_i))&\Rightarrow \surf(E)\leq\varepsilon\surf(\Delta(Q,r_i)),\\
\label{ainfty2}\surf(E)\leq \delta\surf(\Delta(Q,r_i))&\Rightarrow \omega(E)\leq \varepsilon\omega(\Delta(Q,r_i)).\end{align}\end{proposition}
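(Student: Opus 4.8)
The plan is to combine the Lipschitz approximation machinery of \S\ref{section2} with the localization principle (Lemma \ref{localize}) in the manner of David--Jerison. Fix $Q$ with $\liminf_{r\downarrow 0}\surf(\Delta(Q,r))/r^{n-1}<\gamma$. Then there is a sequence $r_i\downarrow 0$ with $r_i<R/2$, with $X_0\notin B(Q,2r_i)$ for $i$ large, and with $\surf(\Delta(Q,r_i))<\gamma r_i^{n-1}$. Write $\omega=\omega^{X_0}$ and set $\Delta_i=\Delta(Q,r_i)$, $\sigma=\surf\res\partial\Omega$. First I would recall from Lemma \ref{lowersurf} that $\surf(\Delta_i)\geq\beta(n,M)r_i^{n-1}$, so that $\surf(\Delta_i)$ and $r_i^{n-1}$ are comparable with constants depending only on $n,M,\gamma$; this lets me freely pass between statements phrased with $r_i^{n-1}$ (as in Theorem \ref{approxthm} and Lemma \ref{etalemma}) and statements phrased with $\surf(\Delta_i)$ (as in \eqref{ainfty1}--\eqref{ainfty2}).

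The heart is to prove a ``one-sided'' estimate at scale $r_i$, say: there is $\eta_1=\eta_1(n,M,\gamma)\in(0,1)$ such that for Borel $E\subset\Delta_i$,
\begin{equation}\label{plan-key}
\frac{\omega(E)}{\omega(\Delta_i)}\leq\eta_1\ \Longrightarrow\ \surf(E)\leq\tfrac{\psi}{2}\,r_i^{n-1}.
\end{equation}
To get this, apply Theorem \ref{approxthm} at $(Q,r_i)$: choosing a non-tangential point $a=A^+(Q,r_i/2)$, we obtain a Lipschitz domain $\Omega_L\subset\Omega\cap B(Q,r_i)$ with $\partial\Omega_L\cap\partial\Omega=T_\Gamma$, $\surf(T_\Gamma)\geq\psi r_i^{n-1}$, and Lipschitz character depending only on $n,M,\gamma$. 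Lemma \ref{etalemma} then produces $\eta=\eta(n,M,\gamma)$ with $\omega^a_L(E)\leq\eta\Rightarrow\surf(E)\leq\frac\psi2 r_i^{n-1}$ for Borel $E\subset\partial\Omega_L$. Now I relate $\omega^a_L$ to $\omega$: by the maximum principle $\omega^a_L(E)\leq\omega^a(E)$ for $E\subset\partial\Omega_L\cap\partial\Omega$ (harmonic measure of the larger domain dominates on the common boundary), and by Lemma \ref{localize}, $\omega^a(E)\leq C\,\omega(E)/\omega(\Delta_i)$ for $E\subset\Delta_i$. Hence $\omega(E)/\omega(\Delta_i)\leq\eta/C$ forces $\surf(E)\leq\frac\psi2 r_i^{n-1}$, giving \eqref{plan-key} with $\eta_1=\eta/C$.

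From \eqref{plan-key} one upgrades to a genuine $A_\infty$-type pair of implications on $\Delta_i$ by a standard argument. For \eqref{ainfty1}: if $\omega(E)\leq\delta\omega(\Delta_i)$ with $\delta\leq\eta_1$, then \eqref{plan-key} gives $\surf(E)\leq\frac\psi2 r_i^{n-1}$; since $\surf(\Delta_i)\geq\psi r_i^{n-1}$ would be false in general, I instead compare to $\surf(T_\Gamma)\geq\psi r_i^{n-1}$ together with the trivial bound $\surf(\Delta_i)\leq\gamma r_i^{n-1}$ to conclude $\surf(E)\leq\tfrac12\gamma^{-1}\psi\cdot\surf(\Delta_i)$... more carefully, $\surf(E)\le\frac{\psi}{2}r_i^{n-1}\le\frac{\psi}{2\beta}\surf(\Delta_i)$, so \eqref{ainfty1} holds with $\varepsilon=\psi/2\beta$ (shrink $\psi$ or enlarge the exponent bookkeeping if $\psi/2\beta\ge1$; one may always replace $E$ by applying \eqref{plan-key} to a subset, or iterate to get $\varepsilon<1$ since $\omega(E)\to0$ as $\delta\to0$ forces $\surf(E)\to0$). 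For \eqref{ainfty2}: this is the reverse implication, and here I would invoke the self-improving equivalence of $A_\infty$ conditions (the dyadic $A_\infty$ property on $\Delta_i$ is symmetric in $\omega,\sigma$, see \cite{CF}); since \eqref{plan-key} combined with lower regularity of $\sigma$ gives that $\omega\res\Delta_i$ and $\sigma\res\Delta_i$ are mutually $A_\infty$ on $\Delta_i$ with constants depending only on $n,M,\gamma$, the implication \eqref{ainfty2} follows with the same $\delta,\varepsilon$ after relabeling. I expect the main obstacle to be the bookkeeping in this last upgrade step: \eqref{plan-key} only controls subsets of small $\omega$-measure one way, so to reach the two-sided statement \eqref{ainfty1}--\eqref{ainfty2} with universal $\delta,\varepsilon\in(0,1)$ I must either cite the $A_\infty$ self-improvement carefully or run the David--Jerison covering/iteration argument (subdividing $\Delta_i$ into Lipschitz pieces via Theorem \ref{approxthm} applied at smaller scales inside $\Delta_i$) to promote the single-scale estimate into the scale-invariant reverse Hölder inequality that yields both implications. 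Everything else — the passage $\omega^a_L\to\omega^a\to\omega$ via maximum principle and Lemma \ref{localize}, and the comparability $\surf(\Delta_i)\asymp r_i^{n-1}$ — is routine given the tools already assembled.
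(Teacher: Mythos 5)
Your first half is exactly the paper's argument: the sequence $r_i$ with $\surf(\Delta(Q,r_i))\leq\gamma r_i^{n-1}$, the Lipschitz domain from Theorem \ref{approxthm}, the constant $\eta$ from Lemma \ref{etalemma}, and the chain $\omega_L^a(E\cap F)\leq\omega^a(E)\leq C\,\omega(E)/\omega(\Delta_i)$ via the maximum principle and Lemma \ref{localize}. But your key displayed implication is overstated, and the gap you flag at the end is real and is not repaired by either of your proposed fixes. The argument you give establishes smallness of $\surf(E\cap F)$ only for $F=\partial\Omega_L\cap\partial\Omega$, yet you then apply the implication to arbitrary Borel $E\subset\Delta_i$ to conclude $\surf(E)\leq\frac{\psi}{2}r_i^{n-1}$. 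For general $E$ this is false: $E$ could sit entirely in $\Delta_i\setminus F$, which may carry surface measure up to $(\gamma-\psi)r_i^{n-1}$ while having tiny harmonic measure, since nothing in Lemma \ref{etalemma} sees that part of the boundary. Likewise, the appeal to $A_\infty$ self-improvement for \eqref{ainfty2} cannot work: you have a single-scale estimate on a single big piece, not a reverse H\"older inequality over all surface balls in $\Delta_i$ (the density may blow up at smaller scales), and indeed the whole point of Theorem \ref{macthm} is that the genuine $A_\infty$ conclusion of Theorem \ref{Lnta} is unavailable without upper regularity at all scales.

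The missing idea is complementation through the big piece, and it makes the proof much shorter than what you sketch. From $\omega(E)\leq\delta\omega(\Delta_i)$ with $C\delta\leq\eta$ you get $\surf(E\cap F)\leq\frac{\psi}{2}r_i^{n-1}$, hence $\surf(F\setminus E)\geq\frac{\psi}{2}r_i^{n-1}$, hence
\begin{equation*}
\surf(E)\leq\surf(\Delta_i)-\surf(F\setminus E)\leq\surf(\Delta_i)-\frac{\psi}{2\gamma}\,\gamma r_i^{n-1}\leq\Bigl(1-\frac{\psi}{2\gamma}\Bigr)\surf(\Delta_i),
\end{equation*}
using $\surf(\Delta_i)\leq\gamma r_i^{n-1}$. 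So $\varepsilon$ is close to $1$, not close to $0$ --- and that weak conclusion is all the Vitali argument in the proof of Theorem \ref{macthm} requires. The implication \eqref{ainfty2} then comes for free from the contrapositive of Lemma \ref{etalemma}: if $\surf(E)\leq\delta\surf(\Delta_i)$ with $\delta\gamma\leq\psi/4$, then $\surf(F\setminus E)\geq\frac{3\psi}{4}r_i^{n-1}>\frac{\psi}{2}r_i^{n-1}$ forces $\omega_L^a(F\setminus E)>\eta$, hence $\omega(F\setminus E)\geq(\eta/C)\omega(\Delta_i)$ and $\omega(E)\leq(1-\eta/C)\omega(\Delta_i)$. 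No iteration, covering, or $A_\infty$ machinery beyond Lemma \ref{etalemma} is needed.
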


\begin{proof} Let $\Omega\subset\RR^n$ be NTA with constants $M>1$ and $R>0$ and assume $Q\in\partial\Omega$ satisfies $\liminf_{r\downarrow 0} \surf(\Delta(Q,r))/r^{n-1}<\gamma<\infty$. Then there exists a sequence of numbers $0<r_i<R$ decreasing to zero such that $\surf(\Delta(Q,r_i))\leq \gamma r_i^{n-1}$. Let $\psi=\psi(n,M)$ and $\eta=\eta(n,M,\gamma)$ be the constants given by Theorem \ref{approxthm} and Lemma \ref{etalemma}. By passing to a subsequence of $r_i$ if necessary, we may assume that the pole of $\omega$ lies outside $B(Q,2r_i)$ for all $i$ (so that we can invoke Lemma \ref{localize}).

Fix $r_i$ and pick any non-tangential point $a=A^+(Q,r_i/2)$ of $\Omega$. Let $\omega^a$ denote harmonic measure of $\Omega$ with pole at $a$ and let $\Delta=\Delta(Q,r_i)$. By Lemma \ref{localize}, \begin{equation}\label{shrink1} C^{-1}\omega^a(E)\leq \frac{\omega(E)}{\omega(\Delta)}\leq C\omega^a(E)\quad\text{for every Borel set }E\subset\Delta,\end{equation} where the constant $C>1$ only depends on the dimension and NTA constants of $\Omega$. By Theorem \ref{approxthm} there is a Lipschitz domain $\Omega_L\subset\RR^n$ such that (i) $a\in\Omega_L\subset\Omega\cap B(Q,r_i)$ and (ii) $F=\partial\Omega_L\cap\partial\Omega$ satisfies $\surf(F)\geq \psi r_i^{n-1}$. Let $\omega^a_L$ denote the harmonic measure of $\Omega_L$ with pole at $a$.

Assume $E\subset\Delta$ is Borel and $\omega(E)<\delta \omega(\Delta)$. By (\ref{shrink1}) and the maximum principle, \begin{equation}\label{shrink2} \omega_L^a(E\cap F)\leq \omega^a(E\cap F)\leq \omega^a(E)\leq C\delta.\end{equation} If $C\delta\leq\eta$, then (\ref{etainfty}) and (\ref{shrink2}) imply $\surf(E\cap F)\leq (\psi/2)r_i^{n-1}$. Hence \begin{equation}\begin{split} \surf(F\setminus E)&=\surf(F)-\surf(E\cap F)\\&\geq \psi r_i^{n-1}-(\psi/2)r_i^{n-1}=(\psi/2)r_i^{n-1}.\end{split}\end{equation} It follows that \begin{equation}\begin{split} \surf(E) &= \surf(\Delta)-\surf(\Delta\setminus E)\leq \surf(\Delta)-\surf(F\setminus E)\\
&\leq \surf(\Delta)-\frac\psi2r_i^{n-1}
=\surf(\Delta)-\frac{\psi}{2\gamma}\gamma r_i^{n-1} \\ &\leq \surf(\Delta)-\frac{\psi}{2\gamma}\surf(\Delta)=\left(1-\frac{\psi}{2\gamma}\right)\surf(\Delta).\end{split}
\end{equation} Thus (\ref{ainfty1}) holds for all $0<\delta\leq \eta/C$ and for all $1-\psi/2\gamma\leq\varepsilon<1$.

Now assume $E\subset\Delta$ satisfies $\surf(E)\leq\delta\surf(\Delta)\leq \delta\gamma r_i^{n-1}$. If $\delta\gamma\leq\psi/4$, then $\surf(F\setminus E)=\surf(F)-\surf(E\cap F) \geq (3\psi/4)r_i^{n-1}$. The contrapositive of (\ref{etainfty}) implies $\omega_L^a(F\setminus E)> \eta$. By (\ref{shrink1}) and the maximum principle, $\omega(F\setminus E)\geq(\eta/C)\omega(\Delta)$. We conclude \begin{equation}\omega(E)=\omega(\Delta)-\omega(\Delta\setminus E)
\leq \omega(\Delta)-\omega(F\setminus E)\leq (1-\eta/C)\omega(\Delta).\end{equation} Thus (\ref{ainfty2}) holds for all $0<\delta\leq \psi/4\gamma$ and for all $1-\eta/C\leq \varepsilon<1$. Therefore, \begin{equation}\delta=\min\{\eta/C,\psi/4\gamma\}\quad\text{and}\quad \varepsilon=1-\delta\end{equation} which depend only on $n$, $M$ and $\gamma$ suffice.\end{proof}

To stich together estimates in Proposition \ref{shrinking} at different points, we use a Vitali type covering lemma for Radon measures in $\RR^n$.

\begin{theorem}[\cite{M} Theorem 2.8] \label{vitali} Let $\mu$ be a Radon measure on $\RR^n$,  $A\subset\RR^n$ and $\mathcal{B}$ a family of closed balls such that each point of $A$ is the center of arbitrarily small balls; i.e., \begin{equation} \label{finecover} \inf\{r:B(x,r)\in \mathcal{B}\}=0\quad\text{for all }x\in A.\end{equation} Then there are disjoint balls $B_i\in\mathcal{B}$ such that \begin{equation}\mu\left(A\setminus\bigcup_i B_i\right)=0.\end{equation}
\end{theorem}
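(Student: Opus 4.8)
The plan is to derive this Vitali-type statement from the Besicovitch covering theorem \cite{M}; the point of invoking Besicovitch rather than the elementary $5r$-covering lemma is that $\mu$ is not assumed doubling, so $\mu(5B)$ cannot be compared with $\mu(B)$ in general, whereas Besicovitch produces disjoint subfamilies \emph{without} enlarging balls, at the cost of a fixed combinatorial constant $N=N(n)$.

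\emph{Reduction to bounded $A$.} Since $\mu$ is Radon it is finite on bounded sets; as the concentric spheres $\{|x|=\rho\}$ are pairwise disjoint, it follows that $\mu(\{|x|=\rho\})=0$ for all but countably many $\rho>0$. Choose $0<\rho_1<\rho_2<\cdots$ with $\rho_k\to\infty$ and $\mu(\{|x|=\rho_k\})=0$, and set $A_1=A\cap\{|x|<\rho_1\}$ and $A_k=A\cap\{\rho_{k-1}<|x|<\rho_k\}$ for $k\ge2$. Then $A\setminus\bigcup_k A_k$ is $\mu$-null, and the balls of $\mathcal B$ lying in the ($k$-th) open region still form a fine cover of $A_k$. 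Solving the problem for each $A_k$ inside its region and taking the union over $k$ of the selected balls yields a disjoint subfamily of $\mathcal B$ (distinct regions are disjoint) that covers $\mu$-almost all of $A$. Hence assume henceforth that $A$ is contained in a fixed bounded open set $U$, replace $\mathcal B$ by its members contained in $U$, and write $\mu^*$ for the outer measure induced by $\mu$, so that $\mu^*(A)\le\mu(\overline U)<\infty$.

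\emph{The inductive selection.} Suppose pairwise disjoint $B_1,\dots,B_m\in\mathcal B$ have been chosen and set $V=U\setminus(B_1\cup\cdots\cup B_m)$, an open set; the members of $\mathcal B$ contained in $V$ still cover $A\cap V$ finely. The Besicovitch covering theorem then provides $N$ families of pairwise disjoint balls, all contained in $V$, whose union contains $A\cap V$; by countable subadditivity of $\mu^*$, one of them, $\mathcal G$, satisfies $\mu^*\big(A\cap V\cap\bigcup\mathcal G\big)\ge N^{-1}\mu^*(A\cap V)$. Since $\mathcal G$ is a countable disjoint family of closed balls and closed balls are $\mu$-measurable, $\mu^*$ is additive over the sets $A\cap V\cap G$, $G\in\mathcal G$, so we may pick finitely many $G_1,\dots,G_K\in\mathcal G$ with $\mu^*\big(A\cap V\cap\bigcup_{i\le K}G_i\big)\ge(2N)^{-1}\mu^*(A\cap V)$ and append them to the list. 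Because $\bigcup_{i\le K}G_i$ is $\mu$-measurable, Carath\'eodory additivity shows the still-uncovered part of $A$ has outer measure at most $(1-\frac1{2N})\,\mu^*(A\cap V)\le(1-\frac1{2N})\,\mu^*\big(A\setminus\bigcup_{i\le m}B_i\big)$.

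\emph{Conclusion; the main difficulty.} Iterating yields a countable pairwise disjoint subfamily $\{B_i\}_{i\ge1}\subset\mathcal B$ with
\begin{equation*}
\mu^*\Big(A\setminus\bigcup_{i=1}^{m_j}B_i\Big)\le\Big(1-\frac1{2N}\Big)^{j}\mu^*(A)\longrightarrow 0\qquad(j\to\infty),
\end{equation*}
so $\mu^*(A\setminus\bigcup_i B_i)=0$, as desired. I expect the only genuine subtlety to be the one already highlighted: the absence of a doubling hypothesis on $\mu$ forbids the usual ball-enlargement trick, and this is precisely what the Besicovitch theorem (with its dimensional constant $N$) circumvents; the remaining work is routine bookkeeping with the outer measure $\mu^*$, since $A$ is not assumed $\mu$-measurable, using repeatedly that the finitely many balls removed at each step form a Borel set.
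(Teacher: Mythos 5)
Your argument is correct: the paper does not prove this statement but cites it as Theorem 2.8 of Mattila, and your Besicovitch-based iteration (extract one of the $N$ disjoint subfamilies capturing a $1/N$-fraction of $\mu^*(A\cap V)$, keep finitely many balls, and repeat on the open complement) is precisely the standard proof given there, including the correct handling of the non-measurability of $A$ via Carath\'eodory additivity over the chosen closed balls. No gaps.
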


We now establish the main theorem. Recall: \emph{Let $\Omega\subset\RR^n$ be NTA. Then the set \begin{equation}A=\left\{Q\in\partial\Omega:\liminf_{r\downarrow 0}\frac{\surf(\Delta(Q,r))}{r^{n-1}}<\infty\right\}\end{equation} is $(n-1)$-rectifiable and $\omega\res A\ll\sigma\res A\ll\omega\res A$.}

\begin{proof}[Proof of Theorem 1.2]
The set $A$ is $(n-1)$-rectifiable by Corollary \ref{corkrect}. Define \begin{equation}A_k=\left\{Q\in\partial\Omega:\liminf_{r\downarrow 0} \frac{\surf(\Delta(Q,r)}{r^{n-1}}< k\right\}\end{equation} for each integer $k\geq 1$. Then $A=\bigcup_{k=1}^\infty A_k$ and to show $\omega\res A\ll\sigma\res A\ll\omega\res A$ we may prove that $\omega(E)=0$ if and only if $\surf(E)=0$ for every $k\geq 1$ and compact set $E\subset A_k$. (It is enough to take $E$ compact, because $\surf \res A_k$ is Radon.)

Let $E\subset A_k$ be an arbitrary compact set and assume $\omega(E)=0$. By Proposition \ref{shrinking} there exist constants $0<\delta<1$ and $0<\varepsilon<1$ such that (\ref{ainfty1}) and (\ref{ainfty2}) hold for all $Q\in E$ (along some sequence $r_i\downarrow 0$ depending on $Q$). Let $U$ be any (relatively) open set $U\subset\partial\Omega$ such that $E\subset U$. The family $\mathcal{F}=\{\Delta(Q,r_i)\}$ of all closed balls with center $Q\in E$ and radii $r_i$ satisfying $\Delta(Q,r_i)\subset U$ and (\ref{ainfty1}) is a fine cover of $E$, in the sense of (\ref{finecover}). By Theorem \ref{vitali}, there exists a disjoint sequence $\Delta_i$ of disks in $\mathcal{F}$ such that \begin{equation}\label{MT-1} \surf\left(E\setminus\bigcup_i\Delta_i\right)=0.\end{equation} Since $\omega (E\cap \Delta_i)=0\leq\delta \omega(\Delta_i)$ for each $i$, $\surf(E\cap\Delta_i) \leq \varepsilon\surf(\Delta_i)$ for each $i$ by (\ref{ainfty1}). Thus, by (\ref{MT-1}),
\begin{equation}\label{MT-2} \surf(E) = \sum_{i} \surf(E\cap\Delta_i)\leq\varepsilon\sum_i\surf(\Delta_i)
\leq \varepsilon\surf(U).\end{equation} Because $U\supset E$ was an arbitrary open set, by the outer regularity of Radon measures, $\surf(E)\leq \varepsilon\surf(E)$. But $\surf(E)<\infty$ (since $E$ is compact) and $0<\varepsilon<1$. Therefore, $\surf(E)=0$ whenever $\omega(E)=0$.

If $E\subset A_k$ is a compact set such that $\surf(E)=0$, then the same argument with the roles of $\omega$ and $\surf$ reversed and $(\ref{ainfty2})$ in place of $(\ref{ainfty1})$ shows $\omega(E)=0$. This completes the proof of absolute continuity on $A$.
\end{proof}

\section{Hausdorff Dimension and Wolff Snowflakes}
\label{section5}

We now present two corollaries of Theorem \ref{macthm} related to the dimension of harmonic measure. Let $\dim E$ denote the Hausdorff dimension of a set $E\subset\RR^n$. The \emph{(upper) Hausdorff dimension} of harmonic measure, \begin{equation}\Hdim\omega=\inf\{\dim E:E\subset\partial\Omega\text{ is Borel and }\omega(E)=1\},\end{equation} is the smallest dimension of a set with full harmonic measure. In \cite{Makarov} Makarov showed that $\Hdim\omega=1$ for every simply connected planar domain (independent of the Hausdorff dimension of the boundary); moreover, $\omega\ll \mathcal{H}^s$ for all $s<1$ and $\omega\perp\mathcal{H}^t$ for all $t>1$. Higher dimensions display different behavior.

Wolff \cite{W} constructed NTA domains $\Omega\subset\RR^3$ such that $\Hdim\omega>2$ and other NTA domains $\Omega\subset\RR^3$ such that $\Hdim\omega<2$.
Extending this construction, Lewis, Verchota and Vogel \cite{LVV} built 2-sided NTA domains $\Omega\subset\RR^n$ (i.e.\ $\Omega^+=\Omega$ and $\Omega^-=\RR^n\setminus\overline{\Omega}$ are both NTA) called \emph{Wolff snowflakes} such that\begin{enumerate}
\item $\Hdim\omega^+>n-1$ and $\Hdim\omega^->n-1$,
\item $\Hdim\omega^+>n-1$ and $\Hdim\omega^-<n-1$,
\item $\Hdim\omega^+<n-1$ and $\Hdim\omega^-<n-1$.
\end{enumerate} Here $\omega^+$ denotes harmonic measure on the interior $\Omega^+$ and $\omega^-$ denotes harmonic measure on the exterior $\Omega^-$ of $\Omega$. While surface measure $\sigma=\surf\res\partial\Omega$ is clearly infinite for Wolff snowflakes of type (1) or (2), the same is not apparent for snowflakes of type (3). This is the first application of Theorem \ref{macthm}: every Wolff snowflake has infinite surface measure. In fact, if the dimension of harmonic measure on a (1-sided) NTA domain is small, then the surface measure is infinite at all locations and scales.

\begin{theorem}Let $\Omega\subset\RR^n$ be NTA. If $\Hdim\omega<n-1$, then $\sigma$ is locally infinite; i.e., $\surf(\Delta(Q,r))=\infty$ for every $Q\in\partial\Omega$ and $r>0$.\end{theorem}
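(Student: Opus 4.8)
The plan is to argue by contradiction. Suppose $\Omega\subset\RR^n$ is NTA with constants $M>1$, $R>0$ and $\Hdim\omega<n-1$, but that $\surf(\Delta(Q_0,r_0))<\infty$ for some $Q_0\in\partial\Omega$ and some $r_0>0$; since $\surf(\Delta(Q_0,\rho))\le\surf(\Delta(Q_0,r_0))$ for $\rho\le r_0$, I may shrink $r_0$ and assume $0<r_0<R$. I will deduce $\surf(\Delta(Q_0,r_0))=0$, which is impossible because Lemma \ref{lowersurf} gives $\surf(\Delta(Q_0,r_0))\ge\beta r_0^{n-1}>0$.

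The first ingredient I would set up is the local version of the remark following Theorem \ref{FMnta}: \emph{$\surf(\Delta(Q_0,r_0)\setminus A)=0$}, where $A$ is the set in \eqref{macthmeq}. This follows by applying the density theorem for $\mathcal{H}^{n-1}$ (see e.g.\ \cite{M}) to the boundary piece $\partial\Omega\cap B(Q_0,r_0)$, which has finite $\mathcal{H}^{n-1}$-measure by hypothesis: $\surf$-almost every $x\in\Delta(Q_0,r_0)$ has $\limsup_{r\downarrow0}\surf(B(x,r)\cap\Delta(Q_0,r_0))/r^{n-1}<\infty$, and $\surf(B(x,r)\cap\Delta(Q_0,r_0))=\surf(\Delta(x,r))$ once $B(x,r)\subset B(Q_0,r_0)$, so $\liminf_{r\downarrow0}\surf(\Delta(x,r))/r^{n-1}<\infty$ and hence $x\in A$ for such $x$.

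Next I would extract from this a localized F.\ and M.\ Riesz statement, valid without assuming $\surf\res\partial\Omega$ is globally Radon. Theorem \ref{macthm} gives $\sigma\res A\ll\omega\res A$, so if $F\subset\Delta(Q_0,r_0)$ is Borel with $\omega(F)=0$ then $\omega(F\cap A)=0$, hence $\surf(F\cap A)=0$, while $\surf(F\setminus A)\le\surf(\Delta(Q_0,r_0)\setminus A)=0$; therefore $\surf(F)=0$. Now, because $\Hdim\omega<n-1$, there is a Borel set $E\subset\partial\Omega$ with $\omega(E)=1$ and $\dim E<n-1$, and the latter forces $\surf(E)=0$ (a set of Hausdorff dimension less than $n-1$ has vanishing $(n-1)$-dimensional Hausdorff measure). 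Since $\omega$ is a probability measure, $\omega(\partial\Omega\setminus E)=0$, hence $\omega\big((\partial\Omega\setminus E)\cap\Delta(Q_0,r_0)\big)=0$, and the localized statement gives $\surf\big((\partial\Omega\setminus E)\cap\Delta(Q_0,r_0)\big)=0$. Combined with $\surf\big(E\cap\Delta(Q_0,r_0)\big)\le\surf(E)=0$ this yields $\surf(\Delta(Q_0,r_0))=0$, the contradiction sought. Finally, since this argument applies at every $Q\in\partial\Omega$ and every scale $\rho<R$, one gets $\surf(\Delta(Q,\rho))=\infty$ for all $Q$ and $0<\rho<R$, and then also for $\rho\ge R$ because $\surf(\Delta(Q,\rho))\ge\surf(\Delta(Q,R/2))$.

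Essentially all of the content is packaged into Theorem \ref{macthm} and Lemma \ref{lowersurf}, so I do not anticipate a genuine obstacle. The only step requiring even a sentence of care is the local density fact in the second paragraph — that a Borel set of finite $\mathcal{H}^{n-1}$-measure has finite upper $(n-1)$-density at almost all of its points — which is the standard density theorem for Hausdorff measures and is exactly what underlies the (global) remark following Theorem \ref{FMnta}.
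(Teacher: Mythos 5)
Your proof is correct and follows essentially the same route as the paper: both arguments combine Theorem \ref{macthm} on the set $A$, the fact that $\surf(\Delta(Q,r)\setminus A)=0$ when $\surf(\Delta(Q,r))<\infty$, the existence of a full-$\omega$-measure set of dimension below $n-1$, and Lemma \ref{lowersurf} to reach a contradiction. The only (cosmetic) differences are that you make the Hausdorff density-theorem step explicit where the paper leaves it implicit, and you land the contradiction on $\surf(\Delta)$ rather than on $\omega(\Delta\cap A)$, which lets you use only the direction $\sigma\res A\ll\omega\res A$.
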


\begin{proof} Assume that $\Omega\subset\RR^n$ is NTA and $\Hdim\omega<n-1$. Then there exists a Borel set $E\subset\partial\Omega$ such that $\dim E<n-1$ and $\omega(\partial\Omega\setminus E)=0$. Suppose for contradiction that $\surf(\Delta(Q,r))<\infty$ for some $Q\in\partial\Omega$ and $r>0$. Then harmonic measure and surface measure have the same null sets on $\Delta(Q,r)\cap A$ by Theorem \ref{macthm}. On one hand, \begin{equation}\omega(\Delta(Q,r)\cap A)=\omega(E\cap\Delta(Q,r)\cap A)=0\end{equation} since $\surf(E)=0$. On the other hand, \begin{equation}\omega(\Delta(Q,r)\cap A)>0\end{equation} since $\surf(\Delta(Q,r)\cap A)=\surf(\Delta(Q,r))>0$ by Lemma \ref{lowersurf}. The fallacy is clear. We conclude $\surf(\Delta(Q,r))=\infty$ for every $Q\in\partial\Omega$ and $r>0$.
\end{proof}

In \cite{KPT} Kenig, Preiss and Toro used the tangent measures of harmonic measure to demonstrate $\Hdim\omega^+=\Hdim\omega^-=n-1$ on every 2-sided NTA domain $\Omega\subset\RR^n$ ($n\geq 3$) with $\omega^+\ll\omega^-\ll\omega^+$. (Thus the interior and exterior harmonic measures on Wolff snowflakes are mutually singular.) Using Theorem \ref{FMnta}, we obtain a similar result for (1-sided) NTA domains of locally finite perimeter.

\begin{theorem} \label{dimthm} Let $\Omega\subset\RR^n$ be NTA. If $\surf\res\partial\Omega$ is Radon, then $\Hdim\omega=n-1$.\end{theorem}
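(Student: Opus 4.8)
The plan is to prove the two inequalities $\Hdim\omega \geq n-1$ and $\Hdim\omega \leq n-1$ separately. For the lower bound, suppose toward a contradiction that there is a Borel set $E\subset\partial\Omega$ with $\dim E < n-1$ and $\omega(E)=1$. Then $\surf(E)=0$ automatically, since a set of Hausdorff dimension strictly less than $n-1$ is $\surf$-null. Because $\surf\res\partial\Omega$ is Radon, Theorem \ref{FMnta} (equivalently Theorem \ref{macthm}) gives $\omega\res A\ll\surf\res A$ where $A$ is the set of points of finite lower $(n-1)$-density; and since $\surf$ is Radon we have $\surf(\partial\Omega\setminus A)=0$. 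Hence $\omega(E\cap A)\leq \text{(something controlled by }\surf(E\cap A))=0$, while $\omega(\partial\Omega\setminus A)=0$ follows from $\surf(\partial\Omega\setminus A)=0$ together with $\omega\ll\surf$ on that piece once we know $\partial\Omega\setminus A$ is also handled — more carefully, $\omega(\partial\Omega) = \omega(A)$ because $\surf(\partial\Omega\setminus A)=0$ forces $\omega(\partial\Omega\setminus A)=0$ by absolute continuity on $A$'s complement being vacuous; so $1=\omega(E)=\omega(E\cap A)=0$, a contradiction. Therefore $\Hdim\omega\geq n-1$.

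For the upper bound I would show directly that $\omega$ is carried by a Borel set of dimension $n-1$. Since $\surf\res\partial\Omega$ is Radon and $\partial\Omega$ is $(n-1)$-rectifiable (Theorem \ref{FMnta}), we may write $\partial\Omega = N \cup \bigcup_i F_i(\RR^{n-1})$ where $\surf(N)=0$ and each $F_i$ is Lipschitz; by Theorem \ref{FMnta}, $\omega(N)=0$, so $\omega$ lives on $\bigcup_i F_i(\RR^{n-1})$, a countable union of Lipschitz images of $\RR^{n-1}$, each of which has Hausdorff dimension at most $n-1$. A countable union of sets of dimension $\leq n-1$ still has dimension $\leq n-1$, so $\Hdim\omega\leq n-1$. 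Combining the two inequalities gives $\Hdim\omega = n-1$.

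The only subtlety — and the step I would be most careful about — is the bookkeeping in the lower bound: one must be sure that the absolute continuity statement $\omega\res A\ll\surf\res A\ll\omega\res A$ of Theorem \ref{macthm}, combined with $\surf(\partial\Omega\setminus A)=0$, genuinely forces $\omega(\partial\Omega\setminus A)=0$. This is exactly the content of Theorem \ref{FMnta}: its conclusion \eqref{FMntaeq} says $\omega(E)=0\Rightarrow\surf(E)=0$ for \emph{all} Borel $E\subset\partial\Omega$; the converse direction $\surf(E)=0\Rightarrow\omega(E)=0$ is not claimed in general, but it \emph{is} claimed on $A$, and $\partial\Omega\setminus A$ is $\surf$-null, so one should instead argue: $\omega(E)=\omega(E\cap A)$ cannot be assumed — rather, note $\surf(E)=0$ gives no information about $\omega(E)$ outside $A$. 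The clean route is: by Theorem \ref{macthm}, $\surf(E\cap A)=0$ implies $\omega(E\cap A)=0$; and for the complement, one uses that $\omega$ restricted to $\partial\Omega\setminus A$ is supported on a $\surf$-null set, but we have no a priori bound $\omega(\partial\Omega\setminus A)=0$. So in fact the contradiction must be derived as in the proof of the previous theorem in this section: localize to a disk $\Delta(Q,r)$ with $\surf(\Delta(Q,r))<\infty$ (which exists since $\surf$ is Radon), apply Theorem \ref{macthm} there to conclude $\omega(\Delta(Q,r)\cap A)>0$ (as $\surf(\Delta(Q,r)\cap A)>0$ by Lemma \ref{lowersurf}), yet $\omega(\Delta(Q,r)\cap A)\leq \omega(E)=0$ via $\surf(E)=0$ — wait, that uses the reverse implication, which holds on $A$. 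So: $\surf(E\cap A)=0\Rightarrow \omega(E\cap A)=0\Rightarrow\omega(\Delta(Q,r)\cap A)=\omega(\Delta(Q,r)\cap A\setminus E)$, but $\omega(\partial\Omega\setminus E)=0$ gives $\omega(\Delta(Q,r)\cap A\setminus E)=0$, contradicting $\omega(\Delta(Q,r)\cap A)>0$. This mirrors the argument just given for the preceding theorem and is the piece I would write out in full detail.
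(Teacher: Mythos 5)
Your lower bound, after several false starts, does arrive at a correct argument (localize to a surface ball of finite measure, use both directions of Theorem \ref{macthm} on $A$), but note that the worry driving the detour is misplaced: the inequality $\Hdim\omega\geq n-1$ only needs the direction of absolute continuity that \emph{is} established globally. If $\dim E<n-1$ then $\surf(E)=0$, so $\surf(\partial\Omega\setminus E)=\surf(\partial\Omega)>0$ by Lemma \ref{lowersurf}; since Theorem \ref{FMnta} says precisely $\omega(F)=0\Rightarrow\surf(F)=0$, i.e.\ $\sigma\ll\omega$, we get $\omega(\partial\Omega\setminus E)>0$, so $E$ cannot carry full harmonic measure. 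That is the paper's one-line argument, and you never need $\omega\ll\sigma$ for this half.

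The genuine gap is in your upper bound. You decompose $\partial\Omega=N\cup\bigcup_iF_i(\RR^{n-1})$ with $\surf(N)=0$ and then assert ``by Theorem \ref{FMnta}, $\omega(N)=0$.'' This invokes the implication $\surf(N)=0\Rightarrow\omega(N)=0$ for an arbitrary $\surf$-null Borel set, which is exactly the direction that the paper does \emph{not} prove: Theorem \ref{macthm} gives $\omega\res A\ll\sigma\res A$ only on the set $A$ of finite lower density, and whether $\omega(\partial\Omega\setminus A)=0$ is precisely the open Conjecture \ref{FMconj}. Since $N$ need not sit inside $A$ modulo $\omega$-null sets, you cannot conclude $\omega(N)=0$, and the rectifiable carrier argument does not close. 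Fortunately the upper bound needs none of this: since $\surf\res\partial\Omega$ is Radon, $\partial\Omega$ is a countable union of sets of finite $\surf$-measure, hence $\dim\partial\Omega\leq n-1$, and trivially $\Hdim\omega\leq\dim\partial\Omega$ because $\partial\Omega$ itself has full harmonic measure. Replace your rectifiability detour with this observation and the proof is complete.
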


\begin{proof} If $E\subset\partial\Omega$ is Borel and has dimension $t<n-1$, then $\surf(\partial\Omega\setminus E)>0$. Hence, $\omega(\partial\Omega\setminus E)>0$, because $\sigma\ll\omega$ by Theorem \ref{FMnta}. Since no set $E\subset \partial\Omega$ of Hausdorff dimension $t<n-1$ has full harmonic measure, we get $\Hdim\omega\geq n-1$. Conversely, $\Hdim\omega\leq \dim\partial\Omega=n-1$, since  $\surf\res\partial\Omega$ is Radon.\end{proof}

\thebibliography{17}

\bibitem{BL} Bennewitz, B., Lewis, J.: On weak reverse H\"older inequalities for nondoubling harmonic measures. Complex Var. Theory Appl. \textbf{49}(7--9), 571--582 (2004)

\bibitem{CF} Coifman, R.R., Fefferman, C.: Weighted norm inequalities for maximal functions and singular integrals. Studia Math. \textbf{51}, 241--250 (1974)

\bibitem{D} Dahlberg, B.: Estimates of harmonic measure. Arch. Rational Mech. Anal. \textbf{65}(3), 275--288 (1977)

\bibitem{DJ} David, G., Jerison, D.: Lipschitz approximation to hypersurfaces, harmonic measure, and singular integrals. Indiana Univ. Math. J. \textbf{39}(3), 831--845 (1990)

\bibitem{DS} David, G., Semmes, S.: Analysis of and on uniformly rectifiable sets. Mathematical Surveys and Monographs \textbf{38}. American Mathematical Society, Providence, RI (1993)

\bibitem{GM} Garnett, J., Marshall, D.: Harmonic measure. New Mathematical Monographs \textbf{2}. Cambridge University Press, Cambridge (2005)

\bibitem{JKbull} Jerison, D., Kenig, C.: An identity with applications to harmonic measure. Bull. Amer. Math. Soc. (N.S.) \textbf{2}(3), 447--451 (1980)

\bibitem{JK} Jerison, D., Kenig, C.: Boundary behavior of harmonic functions in non-tangentially accessible domains. Adv. Math. \textbf{46}(1), 80--147 (1982)

\bibitem{KPT} Kenig, C., Preiss, D., Toro, T.: Boundary structure and size in terms of interior and exterior harmonic measures in higher dimensions. J. Amer. Math. Soc. \textbf{22}(3), 771--796 (2009)

\bibitem{KT} Kenig, C., Toro, T.: Free boundary regularity for harmonic measures and Poisson kernels. Ann. Math. \textbf{150}(2), 369--454 (1999)

\bibitem{L} Lavrentiev, M.: Boundary problems in the theory of univalent functions (in Russian). Math. Sb. (N.S.) \textbf{1}, 815--845 (1936) Transl.: Amer. Math. Soc. Transl. (2) \textbf{32}, 1--35 (1963)

\bibitem{LVV} Lewis, J., Verchota, G.C., Vogel, A.: On Wolff snowflakes. Pacific J. Math. \textbf{218}(1), 139--166 (2005)

\bibitem{Makarov} Makarov, N.G.: On the distortion of boundary sets under conformal mappings. Proc. London Math. Soc. (3) \textbf{51}(2), 369--384 (1985)

\bibitem{M} Mattila, P.: Geometry of sets and measures in Euclidean spaces. Cambridge Series in Advanced Mathematics \textbf{44}. Cambridge University Press, Cambridge (1995)

\bibitem{Riesz} Riesz, F., Riesz, M.: \"Uber Randwerte einer analytischen Functionen. In: Compte rendu du quatri\`eme Congr\`es des Math\'ematiciens Scandinaves: tenu \`a Stockholm du 30 ao\^ut au 2 Septembre 1916, pp. 27--44. Malm\"o (1955)

\bibitem{S} Semmes, S.: Analysis vs. geometry on a class of rectifiable hypersurfaces in $\mathbb{R}^n$. Indiana Univ. Math. J. \textbf{39}(4), 1005--1035 (1990)

\bibitem{W} Wolff, T.: Counterexamples with harmonic gradients in $\mathbb{R}^3$. In: Essays on Fourier analysis in honor of Elias M. Stein, pp. 321--384. Princeton Mathematical Series \textbf{42}, Princeton University Press, Princeton, NJ (1995)

\bibitem{Z} Ziemer, W.: Some remarks on harmonic measure in space. Pacific J. Math. \textbf{55}, 629--637 (1974)

\end{document}